\title{Bordered Floer homology and a meridional class of knot}
\author{Jaepil Lee}
\newtheorem{mainthm}{Theorem}         
\newtheorem{maincor}[mainthm]{Corollary}       
\newtheorem{thm}{Theorem}[section]    
\newtheorem{lem}{Lemma}[section]      
\newtheorem{prop}[thm]{Proposition}
\theoremstyle{definition}
\newtheorem{defn}[thm]{Definition}    
\newtheorem{rem}[thm]{Remark}             
  \let\c@lem=\c@thm
\begin{document}

\maketitle

\begin{abstract}   
For a knot $K$ and its knot Floer complex $CFK^-(K)$, we introduce an algorithm to compute the bordered Floer bimodule of the complement of the knot and its meridian. The grading of the module computes $spin^c$-summands of $\widehat{HFK}(S^3_{-n}(K), \mu_K)$, which can be also extended to arbitrary framing $n$.
\end{abstract}

\section{Introduction}
The classical Heegaard Floer homology package, introduced by Ozsv\'ath and Szab\'o~\cite{OZ02}, provides a 3+1 dimensional topological quantum field theory(TQFT) type invariants. These invariants were proven to be equivalent to 3-dimensional Seiberg-Witten invariants by Kutluhan, Lee, Taubes in~\cite{KLT10} and its four sequels, thus leading breakthrough in low-dimensional topology. The Heegaard Floer theory is also used to define knot and link invariants~\cite{OZ04, OZ05}, and especially the knot Floer homology can be used to find the Heegaard Floer homology of three-manifold obtained by an integral surgery on a knot~\cite{OZ08}. \\

The bordered Heegaard Floer homology was first introduced by Lipshitz, Ozsv\'ath, and Thurston in~\cite{LOT08}, and this package defines an invariant for three-manifold with a single boundary component. In particular, the boundary of three-manifold can be associated to a differential graded algebra, and the three-manifold can be associated to a module called a \emph{type-$D$ module} with the $dg$-algebra acting on the module. The torus boundary case was extensively studied because it was directly related to the knot Floer homology. In fact, the explicit algorithm has introduced in~\cite[Chapter 11]{LOT08}, which enables to find the structure of a $dg$-module of the knot complement from the knot Floer complex $CFK^-$. This technique was useful not only for the computation of $\widehat{HF}$ of a three-manifold of arbitrary integral Dehn surgery; but also the computation of the knot concordance invariant $\tau$ of cable knots~\cite{Hom13} and the concordance genus~\cite{Hom12}, and $L$-space classification problems~\cite{HL12}. There is a variant of this module named  \emph{type-$DD$ module}, invented for a manifold with two boundary components, also by Lipshitz, Ozsv\'ath and Thurston in~\cite{LOT11}. \\

On the other hand, the knot Floer homology on a knot embedded other than $S^3$ has drawn a particular interest. For a given knot $K \subset S^3$, the knot Floer homology of meridian in integral Dehn surgery manifold $S^3_{-n}(K)$ for sufficiently large integer $n$ attracted interests. In particular, Hedden studied the hat-version of knot Floer complex of meridian in the Dehn surgery manifold~\cite[Theorem 4.1]{Hed06} to compute the knot Floer homology of Whitehead double of a knot, and the infinity-version is given in~\cite[Theorem 4.2]{HKL12}. Both results only works for sufficiently large framing. Hedden, Kim, and Park recently studied some small framing cases~\cite{HKP17} as a part of the study on irreducible three-manifolds. \\

Inspired by these work, the main result of our paper enables the computation of the type-$D$ module of meridional class complement in $S^3_{-n}(K)$ from the knot Floer chain complex $CFK^-$. In what follows we shall outline the procedure to compute the type-$DD$ module of a link $\mathcal{L}_K$ complement, where link $\mathcal{L}$ is comprised of a knot $K \subset S^3$ and its meridian. \\
 
In order to describe the procedure, we will need to carefully choose sets of bases for $CFK^-(K)$. In fact, we have a horizontally or vertically simplified basis of $CFK^-(K)$~\cite[Definition 11.23]{LOT08}, such that the differential of every basis element is either zero or strictly drops Alexander filtration or $U$-filtration. For either basis, we can define a horizontal complex or a vertical complex;  these complexes are obtained by disregarding vertical arrows or horizontal arrows from $CFK^-(K)$, respectively. Then each complex has a unique distinguished element, which is a generator of $\widehat{HF}(S^3)$. Then we have the following result.
 
\begin{mainthm}
\label{thm:main}
Let $CFK^-(K)$ be a model for a reduced chain complex for a knot $K \subset S^3$. Then for a sufficiently large integer $n$, the type-$DD$ module of $S^3 \backslash \mathcal{L}_K$ with framing $-n$ can be derived from $CFK^-(K)$ by the following procedure. \\
Let $\{ \mathbf{x}^k \}$ be a vertically simplified basis with $\mathbf{x}^0$ being the distinguished element. For a vertical arrow of length $l$ from $\mathbf{x}^j$ to $\mathbf{x}^{j+1}$, the differential between the associated elements is
\begin{displaymath}
\xymatrix{
\mathbf{x}^j_0 \ar[rr]^{\rho_1 \sigma_3 + \rho_{123} \sigma_{123}} & &  \mathbf{x}^j_{\infty} \ar@/^/[r]^{\sigma_2} & \mathbf{x}^j_1 \ar@/^/[r]^{\sigma_{12} } \ar@/^/[l]^{\rho_{23} \sigma_1} & \cdots \ar@/^/[r]^{\sigma_{12} } \ar@/^/[l]^{\rho_{23}} & \mathbf{x}^j_l \ar@/^/[r]^{\sigma_1 } \ar@/^/[l]^{\rho_{23}} & \mathbf{x}^{j+1}_{\infty}  \ar@/^/[l]^{\rho_{23} \sigma_2} & & \mathbf{x}^{j+1}_0. \ar[ll]_{\rho_1 \sigma_3 + \rho_{123} \sigma_{123}}
}
\end{displaymath}
On the other hand, let $\{ \mathbf{y}^k \}$ be a horizontally simplified basis with $\mathbf{y}^0$ being the distinguished element. For a horizontal arrow of length $l$ from $\mathbf{y}^j$ to $\mathbf{y}^{j+1}$, the differential between the associated elements is
\begin{displaymath}
\xymatrix{
\mathbf{y}^j_{\infty} & &  \mathbf{y}^j_0 \ar[ll]_{\rho_1 \sigma_3 + \rho_{123} \sigma_{123}} \ar@/^/[r]^{\rho_3} & \mathbf{y}^j_{-1} \ar@/^/[r]^{\rho_{23} } \ar@/^/[l]^{\rho_2 \sigma_{12} } & \cdots \ar@/^/[r]^{\rho_{23} } \ar@/^/[l]^{\sigma_{12}} & \mathbf{y}^j_{-l} \ar@/^/[r]^{\rho_2 } \ar@/^/[l]^{\sigma_{12}} & \mathbf{y}^{j+1}_0 \ar@/^/[l]^{\rho_3 \sigma_{12}} \ar[rr]^{\rho_1 \sigma_3 + \rho_{123} \sigma_{123}} & & \mathbf{y}^{j+1}_{\infty}.
}
\end{displaymath}
Lastly, the \emph{unstable chain} between the two distinguished elements is as follows.
\begin{displaymath}
\xymatrix{
\mathbf{x}^0_{\infty} & & \mathbf{x}^0_0 \ar[ll]_{\rho_1 \sigma_3 + \rho_{123} \sigma_{123} } \ar@/^/[r]^{\rho_3} & \gamma_1 \ar@/^/[l]^{\rho_2 \sigma_{12} } \ar@/^/[r]^{\rho_{23}} & \cdots \ar@/^/[l]^{ \sigma_{12} } \ar@/^/[r]^{\rho_{23}} & \gamma_m \ar@/^/[l]^{ \sigma_{12} } \ar@/^/[r]^{\rho_{23} \sigma_1 } & \mathbf{y}^0_{\infty} \ar@/^/[l]^{\sigma_2} & & \mathbf{y}^0_0, \ar[ll]_{\rho_1 \sigma_3 + \rho_{123} \sigma_{123} }
}
\end{displaymath}
where $m = n + 2 \tau(K)$.
\end{mainthm}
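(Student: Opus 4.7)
My plan is to mirror the algorithm of \cite[Chapter 11]{LOT08} --- which produces the type-$D$ module of $S^3 \setminus K$ from $CFK^-(K)$ --- in the doubly-bordered setting. Starting from a bordered Heegaard diagram for $S^3 \setminus K$ with $-n$-framing, I would drill along a small arc connecting a meridional curve of $K$ to the interior, producing a doubly bordered Heegaard diagram for $S^3 \setminus \mathcal{L}_K$ whose two torus boundaries carry the $\rho_i$ and $\sigma_i$ torus algebras respectively. After suitable stabilization and handleslides, the generators of the associated type-$DD$ bimodule should organize into families indexed by the basis $\{\mathbf{x}^k\}$ (or $\{\mathbf{y}^k\}$), together with auxiliary generators $\mathbf{x}^k_i$, $\mathbf{y}^k_i$, and $\gamma_i$ sitting in the rungs between adjacent basis elements and in the unstable chain.

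The bulk of the work is to compute the structure maps on this bimodule. For a vertical arrow of length $l$ from $\mathbf{x}^j$ to $\mathbf{x}^{j+1}$, I expect the holomorphic disks supported in a neighborhood of the associated chain to split into: short disks between consecutive auxiliary generators with outputs $\sigma_{12}$ in one direction and $\rho_{23}$ in the other; short endpoint disks with outputs $\sigma_2$, $\sigma_1$, $\rho_{23}\sigma_1$, and $\rho_{23}\sigma_2$; and two families of disks between $\mathbf{x}^j_0$ and $\mathbf{x}^j_\infty$ (and likewise for $\mathbf{x}^{j+1}$) producing the mixed output $\rho_1\sigma_3 + \rho_{123}\sigma_{123}$, where the two terms correspond respectively to a short disk crossing a single arc on each boundary and a longer disk wrapping once around each of the two boundary components. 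A mirror argument using the horizontally simplified basis handles the second chain.

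The unstable chain is the most delicate piece. Its length $m = n + 2\tau(K)$ should arise from the periodic domain connecting the two distinguished generators $\mathbf{x}^0$ and $\mathbf{y}^0$: the $-n$ framing contributes $n$ intersections with the meridional boundary, while $2\tau(K)$ records the Alexander shift between the vertical and horizontal distinguished elements, exactly as in \cite[Chapter 11]{LOT08}. The main obstacle, I expect, is to verify simultaneously that no mixed-boundary disks have been overlooked in any of the three chains and that the unstable chain length is exactly $n + 2\tau(K)$ rather than off by a small constant. The cleanest consistency check is the pairing theorem: tensoring the computed type-$DD$ bimodule with the type-$AA$ bimodule of the solid-torus filling of $\mu_K$ must recover the \cite[Chapter 11]{LOT08} type-$D$ module of $S^3 \setminus K$ with $-n$-framing, while tensoring with the $0$-framed solid-torus filling of the knot boundary must recover the meridional knot Floer complex of $(S^3_{-n}(K), \mu_K)$ promised in the abstract.
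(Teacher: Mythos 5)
Your overall outline — build a doubly bordered Heegaard diagram for $S^3 \setminus \mathcal{L}_K$, analyze holomorphic curves in a neighborhood of the winding region, and read off the chains — matches the paper's strategy in broad strokes, and the phrase ``mirror the algorithm of [LOT08, Chapter 11]'' is exactly right. But there are several genuine gaps between what you have sketched and a proof.

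First, your diagram construction is too vague to support the moduli space analysis. You propose to ``drill along a small arc connecting a meridional curve of $K$ to the interior,'' but the paper instead forms a literal connected sum of the bordered diagram $\Sigma_K$ for the $(-n)$-framed knot complement with the standard genus-$1$ doubly bordered diagram $\Sigma_{\mathcal L}$ for the Hopf link (the identity bimodule diagram), by opening a small hole $C$ near one puncture of $\Sigma_{\mathcal L}$ and identifying it with $\partial\Sigma_K$. This gives an explicit decomposition of $\Sigma$ into $\Sigma_K$ and six labelled regions $R_{20},R_{30},R_{13},R_{02},R_{01},R_{00}$ in $\Sigma_{\mathcal L}\setminus C$, and the proof consists entirely of classifying which glued domains can carry index-one curves and computing their counts. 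Without something equally explicit you cannot justify which domains occur.

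Second, the most technically novel part of the paper --- the computation of the $\rho_{123}\sigma_{123}$ term --- is entirely missing from your sketch. You gesture at ``a longer disk wrapping once around each of the two boundary components,'' but this domain is actually $\Sigma$ minus only the $z$-region: it has multiplicity one on every region on the $\Sigma_K$ side, hence has no corners there, and its $\Sigma_K$ piece is a boundary degeneration. The paper invokes [LOT08, Prop.\ 11.34] to see that the degeneration has an odd count of curves, decomposes the $\Sigma_{\mathcal L}$ side into an annulus and a rectangle, and then runs a time-dilation pairing argument to match the evaluation maps. It also checks two alternative decompositions via the dualizing trick and the $\mathcal{A}_\infty$ relations. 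None of this can be dismissed as a ``family of disks wrapping around''; as the introduction explicitly flags, pairing a boundary degeneration with other domains was not done in the original bordered package and is the main new ingredient here.

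Third, you jump directly to the reduced differential. What the moduli space analysis actually produces is the unreduced bimodule of Proposition~\ref{prop:main}, where the chains have coefficient maps $D_\emptyset$ (i.e.\ differentials with algebra element $1$) linking $\mathbf{x}^j_{l+i}$ to $\mathbf{x}^{j+1}_i$, and additional differentials $\mathbf{x}^j_0 \xrightarrow{\rho_1\sigma_{123}} \mathbf{x}^{j+1}_\infty$, $\mathbf{y}^j_0 \xrightarrow{\rho_{123}\sigma_3}\mathbf{y}^{j+1}_\infty$ in the length-one case. The diagonal arrows $\rho_{23}\sigma_1$, $\rho_{23}\sigma_2$, $\rho_2\sigma_{12}$, $\rho_3\sigma_{12}$ in the theorem only appear after cancelling those $1$-labelled arrows, following [LOT08, Thm.\ 11.26], including a basis change such as replacing $\mathbf{y}^j_{-1}$ by $\mathbf{y}^j_{-1} + \rho_{23}\cdot\mathbf{y}^j_\infty$ when $l=1$. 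Skipping the unreduced model means you cannot actually derive the stated form.

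Finally, your proposed pairing check is only a consistency test, not a proof: a type-$DD$ bimodule is not determined by its box tensor products with two particular solid-torus fillings, so recovering $\widehat{CFD}(S^3\setminus K)$ and $\widehat{CFK}(S^3_{-n}(K),\mu_K)$ would not establish the formula. It is a useful sanity check, but it cannot replace the moduli space computation.
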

It is crucial to mention that the same result can be achieved by Hanselman's versatile trimodule introduced in~\cite{Han13}. This trimodule is basically the bordered Floer invariant of the 3-link complement, where the link has three unknot components $U_1, U_2, U_3$ such that $\mathrm{lk}(U_1,U_2) = \mathrm{lk}(U_2,U_3) =1$ and $\mathrm{lk}(U_1, U_3)=0$. The derived tensor product with the trimodule and knot complement (using the trick introduced in~\cite[Section 2.3]{HL12} or~\cite[Section 2.3]{HRW16}) will produce the quasi-isomorphic bimodule but a different orientation and labelling from the convention used in this paper. However, the method used in this paper has the following advantage. First, our bimodule is reduced; i.e., there is no differential with algebra element 1. Second, the number of generators of the bimodule obtained by the algorithm is almost one third of the number of generators of the bimodule obtained from the trimodule; therefore the differential is less complicated. (The bimodule obtained from the trimodule has more generators, even after reducing all differentials of algebra element 1.) Third, the computation uses the pairing technique of the bordered Floer theory in a much relaxed condition. Recall that the original pairing theorem of~\cite{LOT08} dealt with the pairing of two domains satisfying certain conditions on the boundary chords~\cite[Chapter 3, Chapter 5]{LOT08}. However, the computation in this paper shows how these conditions can be dropped for certain cases. Especially, the moduli space of a boundary degeneration is paired with other domains, which was not used in the pairing of the original bordered Floer package. \\ 

More importantly, having a simplified type-$DD$ module also allows to sort the generators of a bimodule, and it may recover the filtration information of the knot Floer homology of non-classical knots; i.e., a knot embedded in three-manifold other than $S^3$. Taking the derived tensor product with the above type-$DD$ module and $\mathcal{A}_{\infty}$-module associated to the 0-surgery gives us the $\widehat{CFD}$ of meridian complement in $S^3_{-n} (K)$. Since the type-$D$ module is equipped with a grading depending on $spin^c$-structure,  sorting the module according to the grading reproves~\cite[Theorem 4.1]{Hed06}. Recall that the knot Floer homology $\widehat{CFK}(K) \cong \widehat{CF}(S^3)$ and is endowed with the Alexander filtration $\mathcal{F}(K,m)$.
\begin{mainthm}
Let $K \subset S^3$ be a knot with its meridian $\mu_K$, and let $n$ be a sufficiently large integer. Then, knot Floer homology $\widehat{HFK}_* (S^3_{-n} (K), \mu_K)$ can be decomposed as follows.
\begin{displaymath}
\widehat{HFK}_* (S^3_{-n}, \mu_K ) = \bigoplus_{m=-\frac{n}{2}}^{\frac{n}{2}-1} \left( H_* \left( \frac{\widehat{CF}(S^3)}{\mathcal{F}(K,m)} \right) \oplus H_* \left( \frac{\widehat{CF}(S^3)}{\mathcal{F}(K,-m-1)} \right) \right).
\end{displaymath}
(Here we are implicitly assuming $n$ is even; in the case $n$ is odd, the summation should be from $- \lfloor \frac{n}{2} \rfloor $ to $\lfloor \frac{n}{2} \rfloor$.)
\label{thm:sub}
\end{mainthm}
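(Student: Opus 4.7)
The plan is to chain together Theorem~\ref{thm:main}, the bordered pairing theorem, and the $spin^c$-grading on bordered invariants, reading off $\widehat{HFK}(S^3_{-n}(K),\mu_K)$ as a direct sum of explicit quotient complexes of $\widehat{CF}(S^3)$.

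First I would invoke Theorem~\ref{thm:main} to write down the framing-$(-n)$ type-$DD$ bimodule of $S^3 \setminus \mathcal{L}_K$ in terms of a vertically simplified basis $\{\mathbf{x}^k\}$, a horizontally simplified basis $\{\mathbf{y}^k\}$, and the unstable chain of length $m = n + 2\tau(K)$. Pairing this bimodule with the type-$A$ module of the $(-n)$-framed solid torus filling the $K$-boundary produces $\widehat{CFD}$ of the meridional complement in $S^3_{-n}(K)$; a further pairing with the standard solid torus on the $\mu_K$-side, with an extra basepoint retained so as to remember $\mu_K$ as a knot, yields the chain complex $\widehat{CFK}(S^3_{-n}(K),\mu_K)$ whose homology we wish to compute.

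Next I would split this complex by $spin^c$-structures on $S^3_{-n}(K)$ using the $spin^c$-grading on the bordered bimodule, which is preserved under both pairings. The $spin^c$-classes lift to integers in the range $-\tfrac{n}{2} \le m \le \tfrac{n}{2}-1$, and inspection of the gradings of the generators in Theorem~\ref{thm:main} shows that for $n$ sufficiently large each summand splits further into two connected pieces: one built from the vertical arrows together with a portion of the unstable chain near $\mathbf{x}^0$, the other from the horizontal arrows and the portion of the unstable chain near $\mathbf{y}^0$. After cancelling all contractible generators, I expect the differential inherited from the $DD$-bimodule on the first piece to collapse onto the quotient complex computing $H_*(\widehat{CF}(S^3)/\mathcal{F}(K,m))$, and that on the second piece to collapse onto the one computing $H_*(\widehat{CF}(S^3)/\mathcal{F}(K,-m-1))$; the two indices $m$ and $-m-1$ reflect the conjugation symmetry exchanging the vertical and horizontal simplifications of $CFK^-(K)$.

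The main obstacle will be the explicit combinatorial identification of each $spin^c$-summand with a quotient $\widehat{CF}(S^3)/\mathcal{F}(K,m)$: one must trace the algebra elements $\rho_I,\sigma_J$ through both pairings, carry out the cancellations in the resulting $\mathbb{F}_2$-model, and verify that the surviving differential is precisely that induced by the Alexander filtration at level $m$, while also checking that the gradings of the generators line up with the indexing range in the statement. Once this matching is verified for all $m$ in the claimed range and the parity conventions for $n$ are reconciled, the direct-sum decomposition follows at once, reproving Hedden's large-surgery theorem.
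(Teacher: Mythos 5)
Your outline follows the paper's strategy closely: pair the $DD$-bimodule from Theorem~\ref{thm:main} with a solid-torus $\mathcal A_\infty$-module on the $K$-side (the paper calls this $\widehat{CFA}(\mathcal H_0)$, i.e.\ the module that fills the $-n$-framed boundary component) to obtain Proposition~\ref{prop:zerosurgery}, then pair again on the $\mu_K$-side with $\widehat{CFA}(\mathcal H_\infty)$, and sort generators by the $spin^c$-component of the $\widetilde G$-grading. So the high-level plan is the same, and your identification of the two indices $m$ and $-m-1$ with the vertical and horizontal simplifications is also the correct bookkeeping.

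The step you describe as ``the main obstacle'' is exactly where the paper departs from the route you sketch, and its device is worth highlighting because it replaces the cancellations you anticipate with a purely combinatorial count. The paper plots $\widehat{CFD}(S^3_{-n}(K)\setminus\mu_K)$ as a directed graph on a $(q,r)$-plane, placing the $\jmath_2$-generators at the lattice points of $CFK^-(K)$ and interleaving the $\jmath_1$-generators at half-integer points along the vertical and horizontal chains and the unstable chain. A short grading lemma then shows that every $\jmath_1$-generator on the diagonal line $L_k=\{-q+r=k+\tfrac12\}$ has the same $\widetilde G$-grading, so the $spin^c$-decomposition is read off as the family of diagonals. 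Each diagonal meets each vertical chain at most once, and an intersection occurs precisely when the corresponding vertical arrow $\mathbf x^j\to\mathbf x^{j+1}$ straddles Alexander level $k$; this is exactly the condition for $\mathbf x^j$ to survive in $H_*(\widehat{CF}(S^3)/\mathcal F(K,k))$, and symmetrically for horizontal chains and $\mathcal F(K,-k-1)$. The length of the unstable chain $m=n+2\tau(K)$ then fixes the range $-\tfrac n2\le k\le\tfrac n2-1$. This geometric picture sidesteps the need to ``cancel contractible generators'' that you mention: the module of Theorem~\ref{thm:main} is already reduced (no algebra-element-$1$ differentials), and after the first pairing Proposition~\ref{prop:zerosurgery} is still reduced, so no reduction is required before the count.

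One point of caution for your approach: you should verify, rather than assume, that the $\partial^{\boxtimes}$-differential is trivial (or at worst known) within each $spin^c$-summand after the second pairing. The relation $m_{k+1}(\mathbf u,\sigma_3,\sigma_{23},\dots,\sigma_{23},\sigma_2)=\mathbf u$ in $\widehat{CFA}(\mathcal H_\infty)$ can in principle combine with a $\sigma_3$-arrow out of $\mathbf y^j_{-l}$ followed by a $\sigma_2$-arrow out of $\mathbf y^{j+1}_\infty$ to produce a nonzero differential linking a horizontal-chain generator to a vertical-chain generator on the same line $L_k$; this is precisely the case where $\mathbf y^{j+1}$ is simultaneously the target of a horizontal arrow and the source of a vertical arrow. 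The paper's rank-count argument quietly relies on such contributions either being absent for the chosen simplified bases or cancelling in homology, and if you pursue the explicit algebra-tracing route you describe, you will need to make this step explicit. Apart from that, the proposal is a faithful sketch of the published proof.
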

Although the result was stated for an arbitrary large negative framing $-n$, the computation used in this paper can be easily generalized to any integral framing by taking a derived tensor product with the type-$DA$ module of the toroidal mapping class group element $\tau_{\pm 1}$ given in~\cite[Section 10.2]{LOT11}. Tensoring with this module increases/decreases the framing by one, and eventually the type-$D$ module of a meridian complement in $S^3_{-n}(K)$ can be computed for any framing integer. Sorting out generators of the module by $spin^c$-structure grading will allow us to compute $\widehat{HFK}(S^3_n (K) , \mu_K)$ for arbitrary integer $n$. In particular, if $n= \pm 1$, then $S^3_{\pm 1}$ is an integral homology sphere and $\mu_K$ is null-homologous, so the Alexander grading $A$ of the knot Floer complex is well-defined. 

\begin{maincor}
Let $T$ be the left-handed trefoil knot. The knot Floer chain complex $\widehat{CFK}(S^3_{-1}(T), \mu_T)$ consists of three generators $x, y$ and $z$ such that $A(x)-A(y) = A(y) -A(z) =1$. 
\label{cor}
\end{maincor}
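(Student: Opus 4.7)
The plan is to specialize the general algorithm to the left-handed trefoil $T$ and then chain together the framing-change and filling steps already outlined in the introduction. First, I write down $CFK^-(T)$, which is the standard three-generator staircase with $\tau(T)=-1$; both a vertically simplified basis $\{\mathbf{x}^0,\mathbf{x}^1,\mathbf{x}^2\}$ and a horizontally simplified basis $\{\mathbf{y}^0,\mathbf{y}^1,\mathbf{y}^2\}$ are short and explicit, each consisting of one vertical (resp.\ horizontal) arrow of length $1$. Plugging these bases into Theorem \ref{thm:main} produces, for any sufficiently large $n$, the type-$DD$ bimodule of $S^3 \setminus \mathcal{L}_T$ with framing $-n$: two copies of the vertical/horizontal pattern of the theorem, glued along an unstable chain whose length is $m = n + 2\tau(T) = n-2$.

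Second, to move from framing $-n$ to framing $-1$, I would tensor the $K$-boundary of this bimodule with $(n-1)$ copies of the type-$DA$ bimodule of the toroidal mapping class group element $\tau_{+1}$ from \cite[Section 10.2]{LOT11}, as the introduction explicitly permits. Each tensoring step increases the framing by $1$ and simply shortens the unstable chain while reshuffling a controlled number of generators, so the resulting bimodule for $S^3 \setminus \mathcal{L}_T$ at framing $-1$ can be written down by direct bookkeeping. Third, to convert the $DD$-bimodule into $\widehat{CFD}(S^3_{-1}(T),\mu_T)$, I tensor the remaining $K$-boundary with the $\mathcal{A}_\infty$-module (type-$A$ module) of the $0$-framed solid torus filling $K$, which is the standard $\widehat{CFA}$ of the unknot complement used throughout the paper. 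After cancelling differentials with algebra element $1$, I read off the generators of $\widehat{HFK}(S^3_{-1}(T),\mu_T)$.

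Fourth, I recover the Alexander grading. Because $S^3_{-1}(T)$ is an integral homology sphere and $\mu_T$ is null-homologous, the $spin^c$-decomposition on the type-$D$ module that appears in Theorem \ref{thm:sub} descends to a well-defined Alexander filtration on $\widehat{CFK}(S^3_{-1}(T),\mu_T)$. Sorting the surviving generators by their $spin^c$-grading (the same grading used in the proof of Theorem \ref{thm:sub}) should produce exactly three generators $x,y,z$ sitting in three consecutive $spin^c$-summands, which gives $A(x)-A(y)=A(y)-A(z)=1$.

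The main obstacle will be step two: showing that after applying the framing-change bimodule $n-1$ times the resulting $DD$-bimodule has the claimed small size and a tractable differential, rather than blowing up in generator count. This requires either an inductive cancellation argument showing that most generators introduced by each $\tau_{+1}$ bimodule immediately cancel, or an explicit model computation at small $n$ (say $n=2$ or $n=3$) where the bimodule is still hand-checkable. A secondary, but routine, obstacle is tracking the absolute $spin^c$-grading through the derived tensor products so that the three surviving Alexander gradings come out consecutive rather than merely equally spaced.
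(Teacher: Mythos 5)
Your steps (1)--(3) follow the paper's route almost exactly: write down the staircase $CFK^-(T)$, apply Theorem~\ref{thm:main} at a large negative framing, change framing by tensoring with the $\tau_{+1}$ type-$DA$ bimodule, and finally fill the $K$-side with the $0$-framed solid torus. You worry that the $\tau_{+1}$ tensoring could blow up the generator count, but the paper sidesteps this neatly with a separate proposition in Section~\ref{sec:example}: it observes that tensoring with $\tau_{+1}$ only alters the \emph{unstable chain}, so one simply writes down the new unstable chain for each case $n<2\tau(K)$, $n=2\tau(K)$, $n>2\tau(K)$, and the rest of the bimodule is carried over verbatim. You would do well to formulate your step (2) this way rather than as an open-ended induction.

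Step (4) has a genuine gap. You propose to recover the Alexander grading by sorting the surviving generators by the $\widetilde{G}$-grading used in Theorem~\ref{thm:sub}. But that grading lives in the double coset $(P_1\backslash G/P_2)/\mathbb{Z}$, where $P_1$ is generated by $(v;-n,-1)$ and $P_2$ by $(\tfrac32;0,1)$. When $n=1$ the $spin^c$-lattice $\langle(-1,-1),(0,1)\rangle$ is all of $\mathbb{Z}^2$, so $\widetilde{G}$ is trivial --- as it must be, since $S^3_{-1}(T)$ has a unique $spin^c$-structure. All three generators therefore lie in the same $\widetilde{G}$-class, and sorting by this grading cannot separate them, let alone place them in ``three consecutive $spin^c$-summands.'' The Alexander filtration of the null-homologous knot $\mu_T$ is a refinement \emph{within} that single $spin^c$-structure, and the coarse $spin^c$-grading is blind to it. The paper's actual argument is different: it uses the homotopy invariance of the bordered invariant to compare against the $\widehat{CFD}$ that would be obtained from a doubly pointed Heegaard diagram for $(S^3_{-1}(T),\mu_T)$ with a winding region attached. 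In that model, the arrows $z_1 \xleftarrow{\sigma_2} z_\infty \xleftarrow{\sigma_3} \gamma_1$ force the existence of a domain from $\gamma_1$ to $z_1$ with $n_w=1$, and the arrows $\gamma_1 \xrightarrow{\sigma_1} y_\infty \xleftarrow{\sigma_{123}} y_{-1}$ force a domain from $\gamma_1$ to $y_{-1}$ with $n_z=1$; these multiplicities give the relative Alexander grading directly via $A(\mathbf{x})-A(\mathbf{y})=n_z-n_w$. You need this kind of diagram-level argument (or a genuinely refined grading on $\widehat{CFD}(S^3_{-1}(T)\setminus\mu_T)$ before filling the $\mu_T$-boundary, which the paper does not set up) to land the grading claim.
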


The example calculation in Section~\ref{sec:example} shows that it recovers not only the Alexander filtration of $\mu_T$ but also the $U$-filtration of $\mu_T$. Since the Kunnuth formula holds for the knot connected sum, this can lead to the double filtration information of a knot in the Poincare sphere $S^3_{-1}(T)$ and general integral homology spheres as well.

\subsection*{Organization}
In Section~\ref{sec:prelim} we overview the bordered Floer package for the torus boundary and discuss the algorithm to extract type-$D$ module of a knot complement from the knot Floer chain complex. Section~\ref{sec:diagram} considers the doubly bordered Heegaard diagram of a knot and its meridian complement, and Section~\ref{sec:main} computes the moduli space of holomorphic curves and proves Theorem~\ref{thm:main}. Section~\ref{sec:grading} computes the type-$D$ module of a merdian complement in $S^3_{-n}(K)$ and we see the collection of generators of the module with the same $spin^c$-grading is identical to the quotient of the knot Floer homology, thus proving Theorem~\ref{thm:sub}. In Section~\ref{sec:example} we prove Corollary~\ref{cor} with a model computation and discuss how to recover the knot filtration of a knot in an integral homology sphere.

\subsection*{Acknowledgement}
The author would like to thank Kyungbae Park for the helpful discussion. Also thanks to Robert Lipshitz for pointing out the relation between this work and Hanselman's trimodule~\cite{Han13}. Byungdo Park greatly helped revising the earlier version of this paper. Lastly, I would like to thank my advisor, Olga Plamenevskaya.

\section{A brief introduction of bordered Heegaard Floer homology}
\label{sec:prelim}
In this section we quickly recall definitions and properties of bordered Floer homology developed by Lipshitz, Ozsv\'ath, and Thurston. A more comprehensive account of the theory can be found in~\cite{LOT08, LOT11}. A more accessible introduction can also be found in~\cite{LOT11b}. We will merely list the essential part of their work which will be necessary for our purpose.

\subsection{Algebraic definitions} 
Let $(\mathcal{A},d)$ be a unital differential algebra over $\mathbb{F}_2$ with the subalgebra of idempotents $\mathcal{I} \subset \mathcal{A}$. $\mathcal{I}$ has a basis $\{ \imath_i \}$, such that $\imath_i \cdot \imath_j = \delta_{ij}$ and $\sum \imath_i = 1 \in \mathcal{A}$. \\

A \emph{(left) type-$D$ structure over $\mathcal{A}$} is an $\mathbb{F}_2$-module $N$ with left action of $\mathcal{I}$ and a map
\begin{displaymath}
\delta^1 : N \rightarrow \mathcal{A} \otimes_{\mathcal{I}} N
\end{displaymath}
satisfying the relation
\begin{displaymath}
( \mu \otimes \mathrm{id}_N ) \circ ( \mathrm{id}_{\mathcal{A}} \otimes \delta^1) \circ \delta^1 + (d \otimes \mathrm{id}_N ) \circ \delta^1 = 0,
\end{displaymath} 
where $\mu : \mathcal{A} \otimes \mathcal{A} \rightarrow \mathcal{A}$ is the multiplication of $\mathcal{A}$. \\

The above relation lets the tensor product $\mathcal{A} \otimes_{\mathcal{I}} N$ be a differential graded $\mathcal{A}$-module endowed with a structure $a \cdot (b \otimes x) = ab \otimes x$ and $\partial (a \otimes x) = a \cdot \delta^1(x) + d(a) \otimes x$. This module is called a \emph{type-$D$ module over $\mathcal{A}$}. \\

The map $\delta^1 : N \rightarrow \mathcal{A} \otimes_{\mathcal{I}} N$ can be extended to
\begin{displaymath}
\delta^k : N \rightarrow \mathcal{A}^{\otimes k} \otimes N
\end{displaymath}
defined by $\delta^k = (\mathrm{id}_{\mathcal{A}^{\otimes k-1}} \otimes \delta^1 )  \circ \delta^{k-1}$. If $k=0$, we let $\delta^0 := id_N$. The type-$D$ structure is called \emph{bounded} if $\delta^k =0$ for sufficiently large $k$. \\

An \emph{$\mathcal{A}_{\infty}$-module over $\mathcal{A}$}, or \emph{type-$A$ module} is an $\mathbb{F}_2$-module $M$ with a right action of $\mathcal{I}$ and family of maps
\begin{displaymath}
m_{i+1} : M \otimes_{\mathcal{I}} \mathcal{A}^{\otimes i} \rightarrow M
\end{displaymath}
satisfying the $\mathcal{A}_{\infty}$ relations
\begin{eqnarray*}
0 & = & \sum_{i=0}^n m_{n-i+1} ( m_{i+1} ( x \otimes a_1 \otimes \cdots \otimes a_i ) \otimes a_{i+1} \otimes \cdots \otimes a_n ) \\
 & + & \sum_{i=1}^{n-1} m_n ( x \otimes a_1 \otimes \cdots \otimes a_{i-1} \otimes \mu (a_i, a_{i+1} ) \otimes a_{i+1} \otimes \cdots \otimes a_n ) \\
 & + & \sum_{i=1}^n m_{i+1} ( x \otimes a_1 \otimes \cdots \otimes a_{i-1} \otimes d(a_i) \otimes a_{i+1} \otimes \cdots \otimes a_n).
\end{eqnarray*}
and unital conditions
\begin{eqnarray*}
m_2 (x, 1) & = & x \\
m_i (x, \cdots, 1, \cdots) & = & 0, \quad i>2. 
\end{eqnarray*}
If $m_k =0$ for sufficiently large $k$, we say the $\mathcal{A}_{\infty}$-module $M$ is \emph{bounded}. From now on, every module in this paper will be assumed to be bounded. \\

We will also need type-$A$ or type-$D$ modules of multiple right or left actions. In this paper we will focus on modules with two actions in the following sense. Let $(\mathcal{A}_1, d_1)$ and $(\mathcal{A}_2, d_2)$ be unital differential algebras over $\mathbb{F}_2$, with subalgebra of idempotents $\mathcal{I}_1$ and $\mathcal{I}_2$, respectively. A \emph{(left) type-$DD$ structure over $\mathcal{A}$} is an $\mathbb{F}_2$-module $N$ with left actions of $\mathcal{I}_1$ and $\mathcal{I}_2$, equipped with a map
\begin{displaymath}
\delta^1 : N \rightarrow ( \mathcal{A}_1 \otimes \mathcal{A}_2 ) \otimes_{\mathcal{I}_1 \otimes \mathcal{I}_2} N
\end{displaymath}  
satisfying a similar relation, so that the tensor product $(\mathcal{A}_1 \otimes \mathcal{A}_2) \otimes_{\mathcal{I}_1 \otimes \mathcal{I}_2} N$ becomes a differential module. This module is called a \emph{type-$DD$ bimodule over $\mathcal{A}_1 \otimes \mathcal{A}_2$}. Likewise, we say an $\mathbb{F}_2$-module $M$ is a \emph{(right) $\mathcal{A}_{\infty}$-bimodule over $\mathcal{A}_1 \otimes \mathcal{A}_2$} if $M$ is equipped with a right action of $\mathcal{I}_1 \otimes \mathcal{I}_2$ and family of maps 
\begin{displaymath}
m_{1 + i_1 + i_2} : M \bigotimes_{\mathcal{I}_1 \otimes \mathcal{I}_2} \mathcal{A}_1^{\otimes i_1} \otimes \mathcal{A}_2^{\otimes i_2} \rightarrow M
\end{displaymath}
satisfying the similar relation of $\mathcal{A}_{\infty}$-module. We remark that, for any given input, the relation should satisfy the sum of all terms which have a composition of two of $m$, $\mu$ and $d$ equal to zero. and refer the reader to~\cite{LOT11} for the explicit formulation of the relation. In~\cite{LOT11} authors also have defined type-$DA$ module, and the definition is similar to the type-$DD$ and type-$AA$ modules. A further generalization to a multimodule is found in~\cite{Han13}. 

\subsection{Torus algebra}
The bordered Floer homology package associates a boundary of a three-manifold to an algebra called \emph{strands algebra}. In particular, if the three-manifold has a torus boundary, then the algebra is called a \emph{torus algebra}, is written as $\mathcal{A}(T^2)$. The torus algebra is an $\mathbb{F}_2$-module generated by
\begin{displaymath}
\imath_1, \imath_2, \rho_1, \rho_2, \rho_3, \rho_{12}, \rho_{23}, \rho_{123}.
\end{displaymath}
$\imath_1$ and $\imath_2$ are the idempotents generating $\mathcal{I}$, such that $\imath_1 + \imath_2 =1$ is the identity. These generators satisfy the following relations:
\begin{eqnarray*}
\imath_1 \rho_1 = \rho_1 \imath_2 = \rho_1, & \imath_2 \rho_2 = \rho_2 \imath_1 = \rho_2, & \imath_1 \rho_3 = \rho_3 \imath_2 = \rho_3, \\
\imath_1 \rho_{12} = \rho_{12} \imath_1 = \rho_{12}, & \imath_2 \rho_{23} = \rho_{23} \imath_2 = \rho_{23}, & \imath_1 \rho_{123} = \rho_{123} \imath_2 = \rho_{123},
\end{eqnarray*}
and
\begin{displaymath}
\rho_1 \rho_2 = \rho_{12}, \quad \rho_2 \rho_3 = \rho_{23}, \quad \rho_1 \rho_{23} = \rho_{12} \rho_3 = \rho_{123}.
\end{displaymath}
In general, a strands algebra has a nontrivial differential, but the torus algebra has vanishing differential. \\

Every strands algebra will hereafter refer to the torus algebra, thus $\mathcal{A} (T^2)$ will be abbreviated to $\mathcal{A}$. 

\subsection{Bordered Heegaard diagram}
A \emph{bordered Heegaard diagram} for a three-manifold $Y$ with $\partial Y = T^2$ is a tuple $\mathcal{H} = ( \overline{\Sigma}, \overline{ \boldsymbol{\alpha} }, \boldsymbol{\beta}, z )$ consisting of the following:
\begin{itemize}
  \item a compact oriented surface $\overline{\Sigma}$ of genus $g$ with a single boundary $\partial \overline{\Sigma}$;
  \item $\overline{ \boldsymbol{\alpha} } = \{ \alpha^c_1 ,\cdots, \alpha^c_{g-1}, \alpha^a_1, \alpha^a_2 \}$, where $\alpha^c_i$ are pairwise disjoint circles in the interior of $\overline{\Sigma}$, and $\alpha^a_i$ are disjoint arcs on $\overline{\Sigma}$ away from $\alpha^c_i$ with endpoints on $\partial \Sigma$;
  \item a $g$-tuple of pairwise disjoint circles $\boldsymbol = \{ \beta_1, \cdots, \beta_g \}$ in the interior of $\overline{\Sigma}$; 
  \item a basepoint $z$ on $\partial \overline{\Sigma}$, away from $( \partial \alpha^a_1 ) \cup ( \partial \alpha^a_2 )$.
\end{itemize}
We require every intersection between $\alpha$-curve and $\beta$-curve to be transverse, and $\overline{\Sigma} \backslash \overline{ \boldsymbol{\alpha} }$ and $\overline{\Sigma} \backslash \boldsymbol{\beta}$ to be connected. From the diagram $\mathcal{H}$, we get a three-manifold with boundary by attaching a three-dimensional two-handle to $\overline{\Sigma} \times I$ along the $\boldsymbol{\alpha}$- and $\boldsymbol{\beta}$-circles. Then the $\boldsymbol{\alpha}$-arcs have the parametrization of the torus boundary. \\

The torus algebra is given by the boundary of the diagram. The boundary $\partial \overline{\Sigma}$ has a point $z$, and four other points of the $\partial \alpha^a_1$ and $\partial \alpha^a_2$. Then $\partial \overline{\Sigma}$ is an oriented circle, which has three intervals that do not contain $z$. Label these intervals 1,2 and 3, respecting the orientation of $\partial \overline{\Sigma}$. A Reeb chord on $\partial \overline{\Sigma}$ that starts and end on the $\alpha^a_i$ corresponds to the algebra element $\rho_I$, where $I \in \{ 1, 2, 3, 12, 23, 123 \}$ is determined by the intervals travelled by the chord. If we regard the idempotents $\imath_i$ as the sum of constant chords at $\partial \alpha^a_i$, then the multiplication rule can be regarded as a concatenation of chords.

\subsection{Moduli spaces of curves}

Let $\Sigma$ be a bordered Heegaard surface without compactification. Also let $\mathfrak{S} (\mathcal{H})$ be the set of unordered $g$-tuples $\mathbf{x} = \{ x_1, \cdots, x_g \}$ which contains exactly one point on each $\beta$-curve and exactly one point on each $\alpha$-curve, and at most one point on each $\alpha$-arc. Then we consider the $J$-holomorphic curves from Riemann surfaces with boundary punctures to $\Sigma \times [0,1] \times \mathbb{R}$ satisfying appropriate boundary conditions~\cite[Chapter 5]{LOT08}. Briefly, let $J$ be an admissible  almost complex structure on $\Sigma \times [0,1] \times \mathbb{R}$~\cite[Definition 5.1]{LOT08} such that the projection map $\Sigma \times [0,1] \times \mathbb{R} \rightarrow \Sigma$ is holomorphic ($\Sigma$ as a Riemann surface). Then we discuss $J$-holomorphic curves
\begin{displaymath}
u : (S, \partial S) \rightarrow ( \Sigma \times [0,1] \times \mathbb{R}, (\boldsymbol{\alpha} \times \{ 1 \} \times \mathbb{R} ) \cup (\boldsymbol{\beta} \times \{ 1 \} \times \mathbb{R} ) ),
\end{displaymath}
where $S$ is a Riemann surface with boundary punctures. \\

There are three different types of boundary punctures in $\partial S$; namely $+$, $-$ and $e$. These names are given by the asymptotic behavior of $u$ near the punctures. Let $t : \Sigma \times [0,1] \times \mathbb{R} \rightarrow \mathbb{R}$ be the projection. Then the $t$-coordinate at a boundary puncture $p$ is asymptotic to one of the following: $+ \infty$, $-\infty$ and some real number. Then $p$ is called $+$, $-$ and $e$ puncture, respectively. Observe that for a projection $\pi_{\Sigma} : \Sigma \times [0,1] \times \mathbb{R} \rightarrow \Sigma$,
\begin{itemize}
  \item the image of a $+$ or $-$ puncture under $\pi_{\Sigma}$ is on the intersection between $\boldsymbol{\alpha}$- and $\boldsymbol{\beta}$- curves;
  \item the image of an $e$ puncture under $\pi_{\Sigma}$ is a sequence of (ordered set of) Reeb chords $\overrightarrow{ \boldsymbol{\rho} } : = (\rho_{I_1}, \cdots \rho_{I_k})$ on $\partial \overline{\Sigma}$. (In fact, each $\rho_{I_i} \in \overrightarrow{ \boldsymbol{\rho} }$ can be also a set of Reeb chords. However, in this paper, $\rho_{I_i}$ is always considered as a singleton set of chord.)
\end{itemize}
In this sense, for $\mathbf{x}, \mathbf{y} \in \mathfrak{S} (\mathcal{H})$, if a curve $u$ is asymptotic to points corresponding to $\{ x_1, \cdots, x_g \} = \mathbf{x}$ at $-$ puncture and asymptotic to points corresponding to $\{ y_1, \cdots, y_g \} = \mathbf{y}$ at $+$ puncture, then we say the curve $u$ is connecting from $\mathbf{x}$ to $\mathbf{y}$ (possibly adjacent to Reeb chords represented by $\overrightarrow{ \boldsymbol{\rho} }$). \\

For $\mathbf{x}, \mathbf{y} \in \mathfrak{S} ( \mathcal{H} )$, let $\pi_2 (\mathbf{x}, \mathbf{y})$ denote the homology class of such curves that connects $\mathbf{x}$ to $\mathbf{y}$. For a homology class $B \in \pi_2 (\mathbf{x}, \mathbf{y})$, $\widetilde{\mathcal{M}}^B  (\mathbf{x}, \mathbf{y} )$ denotes the moduli space of holomorphic curves in $B$. If $B$ is adjacent to a sequence of Reeb chords $\overrightarrow{ \boldsymbol{\rho} }$, then we write $\widetilde{\mathcal{M}}^B (\mathbf{x}, \mathbf{y} ; \overrightarrow{ \boldsymbol{\rho} } )$. Just as the standard Floer theory, there is an $\mathbb{R}$-action on $\Sigma \times [0,1] \times \mathbb{R}$ by translation on the $t$-coordinate. Taking quotient of the action, the reduced moduli space is written as
\begin{displaymath}
\mathcal{M}^B (\mathbf{x}, \mathbf{y} ; \overrightarrow{ \boldsymbol{\rho} } ) : = \widetilde{\mathcal{M}}^B (\mathbf{x}, \mathbf{y} ; \overrightarrow{ \boldsymbol{\rho} } ) / \mathbb{R}.
\end{displaymath}

Discussing the expected dimension and modulo two count of a moduli space is usually not easy; but in some cases they are well-understood just by studying the image of a holomorphic curve under the projection $\pi_{\Sigma} : \Sigma \times [0,1] \times \mathbb{R} \rightarrow \Sigma$. A \emph{region of $\mathcal{H}$} is a connected component of $\overline{\Sigma} \backslash ( \overline{\boldsymbol{ \alpha }} \cup \boldsymbol{\beta} )$. A \emph{domain} is a linear combination of regions with integral coefficient. Then under the projection $\Sigma \times [0,1] \times \mathbb{R} \rightarrow \Sigma$, a curve of a homology class $B \in \pi_2 (\mathbf{x}, \mathbf{y})$ gives a domain $D$. In particular, if $u$ is a holomorphic curve, then the coefficient of its domain must be nonnegative. The domain $D$ may be adjacent to $\partial \overline{\Sigma}$; in that case we have a nonempty sequence of Reeb chords $\overrightarrow{ \boldsymbol{\rho} }$.  \\

A \emph{quadrilateral or rectangular domain} will refer to the domain whose coefficients are all 0 or 1, and the shape of regions with nonzero coefficient is quadrilateral or rectangular. Likewise, an \emph{annular domain} is a domain with coefficients that are all 0 and 1, and the shape of the regions with nonzero coefficients is an annulus. A provincial domain is a domain that is not adjacent to the boundary. A \emph{periodic domain} is a domain that does not have a corner between $\boldsymbol{\alpha}$- and $\boldsymbol{\beta}$- curves (but it may have a corner between $\boldsymbol{\alpha}$-curves and $\partial \overline{\Sigma}$). The space of periodic domain is denoted by $\pi_2 (\mathbf{x}, \mathbf{x})$. \\

For a homology class $B \in \pi_2 (\mathbf{x}, \mathbf{y})$ and a sequence of Reeb chords $\overrightarrow{ \boldsymbol{\rho} }$, the expected dimension $\mathrm{ind} (B, \overrightarrow{ \boldsymbol{\rho} })$ of the moduli space $\widetilde{ \mathcal{M} }^B (\mathbf{x}, \mathbf{y} ; \overrightarrow{ \boldsymbol{\rho} } )$ can be deduced from the following formula in~\cite[Definition~5.61]{LOT08}.
\begin{displaymath}
\mathrm{ind} (B, \overrightarrow{\boldsymbol{\rho}} ) = e(B) + n_{\mathbf{x}} (B) + n_{\mathbf{y}} (B) + | \boldsymbol{ \overrightarrow{\rho}} | + \iota ( \overrightarrow{ \boldsymbol{\rho} } ).
\end{displaymath}
The terms appearing in the above formula are explained below.
\begin{itemize}
  \item $e(B)$ is the \emph{Euler measure of the domain $B$}. The Euler measure of a region is defined as its Euler characteristic minus 1/4 the number of its corners (intersections between $\alpha$-curves and $\beta$-curves, and $\alpha$-curves and $\partial \overline{\Sigma}$), and additive under union.
  \item Let $\mathbf{x} \in \mathfrak{S} ( \mathcal{H} )$.  $n_{\mathbf{x}}$ is the sum of average multiplicity of four regions of each $x_i \in \mathbf{x}$.
  \item $| \boldsymbol{ \overrightarrow{\rho}} | + \iota ( \overrightarrow{ \boldsymbol{\rho} } )$ is quite complicated in general, but this quantity is simplified for the torus algebra. Let $\overrightarrow{ \boldsymbol{\rho} } = ( \rho_{I_1}, \cdots, \rho_{I_k} )$. Then the quantity is
  \begin{displaymath}
  \frac{1}{2} k + \sum_{s<t} L (\rho_{I_s} ,\rho_{I_t} ) ,
  \end{displaymath}
  where $L (\rho_{I_s} ,\rho_{I_t})$ equals 
  \begin{displaymath}
  L(\rho_{I_s}, \rho_{I_t}) = \left\{
    \begin{array}{ll}
      1/2 & \textrm{if $(I_s, I_t) = (1,2), (2,3), (12,3), (1,23)$} \\
      -1/2 & \textrm{if $(I_s, I_t) = (2,1), (3,2), (3,12), (23,1)$} \\
      1 & \textrm{if $(I_s, I_t) = (12,23)$} \\
      -1 & \textrm{if $(I_s, I_t) = (23,12)$} \\
      0 & \textrm{otherwise}
    \end{array}
    \right.
  \end{displaymath}
\end{itemize}

We now define the type-$D$ module $\widehat{CFD} (\mathcal{H})$. Let $X ( \mathcal{H} )$ be a $\mathbb{F}_2$-module spanned by $\mathfrak{S} (\mathcal{H})$, then
\begin{displaymath}
\widehat{CFD} ( \mathcal{H} ) := \mathcal{A} \otimes_{\mathcal{I}} X ( \mathcal{H} ),
\end{displaymath}
with $a \cdot (b \otimes \mathbf{x}) := (a \cdot b) \otimes \mathbf{x}$. Recall that the idempotent $\imath_i \in \mathcal{I}$ is associated to the arc $\alpha^a_i$. The idempotent action is defined as 
\begin{displaymath}
\imath_i \cdot \mathbf{x} := \left\{
\begin{array}{ll}
\mathbf{x} & \textrm{if $\mathbf{x}$ is \emph{not} occupying $\alpha^a_i$} \\
0 & \textrm{if $\mathbf{x}$ is occupying $\alpha^a_i$}.
\end{array}
\right.
\end{displaymath}

If a source of holomorphic curve $u$ having an $e$ puncture, we need to consider the contribution of the element of $\mathcal{A}$ to the differential of the type-$D$ module. Thus, for a sequence $\overrightarrow{ \boldsymbol{\rho} } = (\rho_{I_1}, \cdots, \rho_{I_k})$, we let
\begin{displaymath}
a( \overrightarrow{ \boldsymbol{\rho} } ) := \rho_{I_1} \cdots \rho_{I_k}.
\end{displaymath}
In other words, $a( \overrightarrow{ \boldsymbol{\rho} } )$ is merely a multiplication of all Reeb chords appearing in the sequence $ \overrightarrow{ \boldsymbol{\rho} } $. We also need to consider the case where the orientation of the boundary being reversed from the induced orientation which we will write $- \overrightarrow{ \boldsymbol{\rho} } := ( \overline{ \rho }_{I_1}, \cdots, \overline{\rho}_{I_k} )$. Here $\overline{ \rho }_{I_i}$ is the same Reeb chord as $\rho_{I_i}$ but with a reversed orientation regarded as a chord in $- \partial \overline{\Sigma}$. For example, $\overline{\rho}_1 = \rho_3$, $\overline{\rho}_2 = \rho_2$, $\overline{\rho}_3 = \rho_1$, $\overline{\rho}_{12} = \rho_{23}$ and so on. \\

The differential of $\widehat{CFD} (\mathcal{H})$ is defined by counting number of points of a reduced moduli space whose multiplicity of the region containing $z$ equals zero. Precisely, 
\begin{displaymath}
\partial ( \mathbf{x} ) : = \sum_{ \mathbf{y} \in \mathfrak{S} (\mathcal{H}) } \sum_{ 
\begin{subarray}{c}
B \in \pi_2 (\mathbf{x}, \mathbf{y}) \\
\{ \overrightarrow{\rho} | \mathrm{ind}(B, \overrightarrow{\rho})=1 \}
\end{subarray}
 }  
\# \mathcal{M}^B ( \mathbf{x}, \mathbf{y} ; \overrightarrow{ \boldsymbol{\rho} } ) a ( - \overrightarrow{ \boldsymbol{\rho} } ) \cdot \mathbf{y},
\end{displaymath}
where the multiplicity of the domain of $B$ at $z$ equals zero. \\ 

Type-$A$ module $\widehat{CFA} (\mathcal{H})$ is defined similarly. It is a $\mathbb{F}_2$-module with the same generating set $\mathfrak{S}( \mathcal{H} )$ endowed with the idempotent action
\begin{displaymath}
\mathbf{x} \cdot \imath_i := \left\{
\begin{array}{ll}
\mathbf{x} & \textrm{if $\mathbf{x}$ is occupying $\alpha^a_i$} \\
0 & \textrm{if $\mathbf{x}$ is \emph{not} occupying $\alpha^a_i$},
\end{array}
\right.
\end{displaymath}
We have a $\mathcal{A}_{\infty}$-relation on $\widehat{CFA} (\mathcal{H})$ such that
\begin{displaymath}
m ( \mathbf{x}, \rho_{I_1}, \cdots, \rho_{I_k} ) := \sum_{\mathbf{y} \in \mathfrak{S} (\mathcal{H})} \sum_{ \begin{subarray}{c} 
B \in \pi_2 (\mathbf{x}, \mathbf{y}) \\
\{ \overrightarrow{\rho} | \mathrm{ind}(B, \overrightarrow{\rho})=1 \}
\end{subarray}
}
\left( \# \mathcal{M}^B ( \mathbf{x}, \mathbf{y} ; \overrightarrow{ \boldsymbol{\rho} } ) \right) \mathbf{y}.
\end{displaymath}
Again, the multiplicity of the domain of $B$ at $z$ equals zero. \\

Having two modules $\widehat{CFA}(\mathcal{H}_1)$ and $\widehat{CFD} (\mathcal{H}_2)$, the bordered Floer theory computes the classical hat-version of the Heegaard Floer homology as follows. Let $\mathcal{H}_1 \cup \mathcal{H}_2$ be the the boundary sum of two bordered Heegaard diagrams $\mathcal{H}_1$ and $\mathcal{H}_2$, such that $\alpha$-arcs with the same labellings are paired on the boundary of the diagrams. Then $\widehat{CF} (\mathcal{H}_1 \cup \mathcal{H}_2 )$ is a chain complex generated by generators in $\widehat{CFA} (\mathcal{H}_1) \otimes_{\mathcal{I}} \widehat{CFD} (\mathcal{H}_2)$, with differential $\partial^{\boxtimes}$ defined by
\begin{displaymath}
x \otimes y \mapsto \sum_{k=0}^{\infty} (m_{k+1} \otimes \mathrm{id}) (x \otimes \delta^k (y) ).
\end{displaymath}
In fact, this chain complex is quasi-isomorphic to the Heegaard Floer complex of three-manifold represented by Heegaard diagram $\mathcal{H}_1 \cup \mathcal{H}_2$~\cite[Theorem 1.3]{LOT08}. This paring is called the \emph{derived tensor product}, and in general a derived tensor product of a type-$A$ module $M$ and a type-$D$ module $N$ is denoted by $(M \boxtimes N, \partial^{\boxtimes})$. \\ 

In~\cite{LOT11} bordered Heegaard package has been generalized to three-manifolds with two boundaries. Each boundary can be associated to either type-$A$ or type-$D$ structures, thus resulting in type-$DD$, $DA$ and $AA$ modules. The definitions are almost identical to the single boundary case, except there exists two different algebras called left and right torus algebra (they are the same torus algebra with different names) and they have the same action as described above. See~\cite[Chapter 2]{LOT11}.

\subsection{Simplified bases for the knot Floer complex}
This subsection provides a brief survey on the classical knot Floer homology. The detailed explanation is found in \cite[Chapter 11]{LOT08}, but this subsection follows the concise description in~\cite[Section 2]{Hom13}. The special basis of the knot Floer complex called \emph{simplified basis}. An advantage of the simplified basis is that it enables one to easily extract the type-$D$ module of a knot complement from the knot Floer chain complex. \\
 
Let $\mathcal{H}_K := ( \Sigma_0, \boldsymbol{\alpha}_0, \boldsymbol{\beta}_0, z, w)$ be the classical doubly pointed genus $g$ Heegaard diagram of a knot $K$ in $S^3$. Let $\mathfrak{S}_K$ be a set of $g$-tuples of intersection points between $\boldsymbol{\alpha}$ and $\boldsymbol{\beta}$ circles where each $\boldsymbol{\alpha}$ and $\boldsymbol{\beta}$ circles are used exactly once. The chain complex $CFK^-(K)$ is freely generated by the generator set $\mathfrak{S}_K$ over $\mathbb{F}_2 [U]$. The differential is defined as
\begin{displaymath}
\partial \mathbf{x} := \sum_{y \in \mathfrak{S}_K} \sum_{ 
  \begin{subarray}{l}
    \phi \in \pi_2 (\mathbf{x}, \mathbf{y}) \\
    \textrm{ind}(\phi) =1
  \end{subarray} }
\# \mathcal{M}(\phi) U^{n_w (\phi ) } \cdot \mathbf{y}.
\end{displaymath}
This complex has a homological $\mathbb{Z}$-grading, called the \emph{Maslov grading} $M$, and a $\mathbb{Z}$-filtration called the \emph{Alexander filtration} $A$. The relative Maslov grading and the Alexander filtration is defined as follows:
\begin{displaymath}
M(\mathbf{x}) - M(\mathbf{y} ) = \textrm{ind}(\phi) - 2 n_w (\phi) \quad \textrm{and} \quad A(\mathbf{x}) - A(\mathbf{y}) = n_z (\phi) - n_w (\phi), 
\end{displaymath}
where $\phi \in \pi_2 (\mathbf{x}, \mathbf{y})$. Multiplication by $U$ shifts the Maslov grading and the Alexander filtration as follows:
\begin{displaymath}
M(U \cdot \mathbf{x}) = M(\mathbf{x}) -2 \quad \textrm{and} \quad A( U \cdot \mathbf{x}) = A(\mathbf{x}) -1.
\end{displaymath}
As usual, we let $\widehat{CFK}(K) : = CFK^-(K) / (U=0)$. The Maslov grading is normalized so that the generator of $H_*(\widehat{CFK}(K)) \cong \widehat{HF} (S^3) \cong \mathbb{F}_2$ is supported in Maslov grading zero. Alexander filtration carries over to $\widehat{CFK}(K)$ and conventionally the subgroup of $\widehat{CFK}(K)$ generated by Alexander filtration less than equal to $m$ will be denoted by $\mathcal{F}(K,m)$. \\

The Alexander grading $A$ is given as follows: The homology $\widehat{HFK}(K)$ of $\widehat{CFK}(K)$ has a decomposition
\begin{displaymath}
\widehat{HFK}(K) = \bigoplus_s \widehat{HFK}(K,s)
\end{displaymath}
where $s$ is the Alexander grading induced by the filtration. Here we use the normalize Alexander grading defined by the following relation:
\begin{displaymath}
\min \{ s |  \widehat{HFK}(K,s) \neq 0 \} = - \max \{ s |  \widehat{HFK}(K,s) \neq 0 \}.
\end{displaymath}

Finally, the complex $CFK^{\infty}(K) := CFK^-(K) \otimes_{\mathbb{F}_2 [U]} \mathbb{F}_2 [U, U^{-1}]$ is naturally a $\mathbb{Z} \otimes \mathbb{Z}$-filtered complex, with one filtration given by the $(-U)$-exponent and the other by the Alexander filtration. Traditionally, each element of the complex $CFK^{\infty}(K)$ is plotted on the $(i,j)$-plane, where the $i$-th coordinate is the $(-U)$-exponent and the $j$-th coordinate the Alexander grading; i.e., the element $U^i \cdot \mathbf{x}$ will be plotted at the coordinate $(-i, A(U^i \cdot \mathbf{x}))$. Thus the differential $\partial$ of the complex is depicted as an arrow pointing (non-strictly) downwards and to the left. \\

The complex of $CFK^{\infty}(K)$ has a $\mathbb{Z} \oplus \mathbb{Z}$-filtration, and a subset $S$ of $\mathbb{Z} \oplus \mathbb{Z}$ may be used to describe a subcomplex of $CFK^{\infty}(K)$. Let $C(S) \subset CFK^{\infty}(K)$ consists of points whose $(i,j)$-coordinates are in $S$. Although not every $C(S)$ is a subcomplex of $CFK^{\infty}(K)$, but for some appropriate $S$, $C(S)$ may inherit the quotient complex structure. For instance, $C(i=0)$ can be identified to $\widehat{CFK}(K)$. \\

In~\cite{OZ03}, the smooth concordance invariant $\tau(K)$ is defined as follows.
\begin{displaymath}
\tau(K) := \min \{ s \ | \ \imath : C(i=0, j \leq s) \rightarrow C(i=0) \textrm{ induces a nontrivial map on homology} \}
\end{displaymath}

Although \cite[Theorem 11.36]{LOT08} is originally stated in basis-free version, for our purpose we choose a specific basis for $CFK^{\infty}(K)$. Let $C^h := C(j=0)$ be a complex equipped with a differential $\partial^h$, which is called a \emph{horizontal complex}. We view this complex as a subquotient complex of $CFK^{\infty}(K)$ consisting of elements with $j$-coordinates equal to zero, with differential pointing towards (non-strictly) to the left. The horizontal complex inherits the $\mathbb{Z}$-filtration from $CFK^{\infty}(K)$ by $(-U)$-exponents. Likewise, we define $C^v : = C(i=0)$ equipped with a differential $\partial^v$, called a \emph{vertical complex}. The vertical complex also inherits the $\mathbb{Z}$-filtration structure from $CFK^{\infty}(K)$ by the Alexander filtration. \\

$CFK^{\infty}(K)$ is called \emph{reduced} if the differential $\partial$ of $CFK^{\infty}(K)$ strictly drops either the Alexander filtration or $(-U)$-exponents filtration. It is known that every filtered chain complex is filtered chain homotopic to a reduced complex. 

\begin{defn}
A basis $\{\mathbf{x}_i\}$ for a filtered chain complex $(C, \partial)$ is called a \emph{filtered basis} if the set $\{ \mathbf{x}_i \ | \mathbf{x}_i \in C_S \}$ is a basis for a filtered subcomplex $C_S \subset C$. 
\end{defn}

Now we are able to define two different simplified bases for $CFK^{\infty}(K)$. 
\begin{defn}
A filtered basis $\{ \mathbf{x}_i \}$ over $\mathbb{F}_2 [U]$ for the reduced complex $CFK^{\infty}(K)$ is \emph{vertically simplified} if for each basis element $\mathbf{x}_i$, exactly one of the following holds:
\begin{itemize}
  \item $\mathbf{x}_i$ is in the image of $\partial^v$ and there exists a unique basis element $\mathbf{x}_{i-1}$ such that $\partial^v \mathbf{x}_{i-1} = \mathbf{x}_i$.
  \item $\mathbf{x}_i$ is in the kernel, but not in the image of $\partial^v$.
  \item $\mathbf{x}_i$ is not in the kernel, and $\partial^v \mathbf{x}_i = \mathbf{x}_{i+1}$.
\end{itemize}  
\end{defn}
If $\partial^v \mathbf{x}_i = \mathbf{x}_{i+1}$, then we say that there is a \emph{vertical arrow} from $\mathbf{x}_i$ to $\mathbf{x}_{i+1}$ and the \emph{length of the arrow} is $A(\mathbf{x}_i)- A( \mathbf{x}_{i+1})$. Since $H_* (C^v) \cong \mathbb{F}_2$, there is a distinguished element that generates the homology. By reordering, we let $\mathbf{x}_0$ denote the element. \\  

Let $\{ \mathbf{x}_i \}$ be a filtered basis for the reduced complex $CFK^{\infty}(K)$. Note that the set of elements $\{ U^{m_i} \cdot \mathbf{x}_i \}$, where $m_i := A( \mathbf{x}_i)$, induces a basis for $C^h$. 
\begin{defn}
A filtered basis $\{ \mathbf{x}_i \}$ over $\mathbb{F}_2 [U]$ for the reduced complex $CFK^{\infty}(K)$ is \emph{horizontally simplified} if each $\mathbf{x}_i$ satisfies exactly one of the following:
\begin{itemize}
  \item $U^{m_i} \mathbf{x}_i$ is in the image of $\partial^h$ and there exists a unique basis element $\mathbf{x}_{i-1}$ such that $\partial^h U^{m_{i-1}} \mathbf{x}_{i-1} = U^{m_i} \mathbf{x}_i$.
  \item $U^{m_i} \mathbf{x}_i$ is in the kernel, but not in the image of $\partial^h$.
  \item $U^{m_i} \mathbf{x}_i$ is not in the kernel of $\partial^h$, and $\partial^h U^{m_i} \mathbf{x}_i = U^{m_{i+1}} \mathbf{x}_{i+1}$.
\end{itemize}
\end{defn}
Again, if $\partial^h U^{m_i} \mathbf{x}_i = U^{m_{i+1}} \mathbf{x}_{i+1}$, we say there is a \emph{horizontal arrow} from $\mathbf{x}_i$ to $\mathbf{x}_{i+1}$, and the \emph{length of the arrow} is $A(\mathbf{x}_i) - A( \mathbf{x}_{i+1})$. Also we let $\mathbf{x}_0$ denote the element that generates the homology $H_* (C^h)$. 

\subsection{Bordered Heegaard diagram of the knot complement}
For a doubly pointed Heegaard diagram $\mathcal{H}_K = ( \Sigma_0, \boldsymbol{\alpha}_0, \boldsymbol{\beta}_0, z, w)$ of a knot $K$, the bordered Heegaard diagram $\mathcal{H} (n)$ of the knot $K$ complement is obtained by attaching a two-dimensional one-handle to $\Sigma_0$ with one foot close to $z$ and the other foot close to $w$. Let $\Sigma$ denote the resulting genus $g$ surface and $m$ be a meridional circle on $\Sigma$. Then introduce a circle $\alpha_g$ in the two-dimensional one-handle parallel to the meridian $m$ of $K$, and a circle $\beta_g$ which transversely intersects $\alpha_g$ once and disjoint from other $\beta$ circles. \\

Let $\lambda$ be a circle in $\Sigma$, being a circle in $\Sigma$ a zero-framed longitude with respect to the Seifert framing and intersecting $\alpha_g$ once transversely and disjoint from other $\alpha$ circles. Now for a tubular neighborhood $\mathcal{W}$ of $\alpha_g$ in the two-dimensional one-handle, apply the Dehn twist on $\lambda$ along $\alpha_g$ inside $\mathcal{W}$ $n$-times, as in Figure~\ref{fig:windingregion}. Finally, puncture the intersection of $\lambda$ and $\alpha_g$, and label the four regions around the puncture 0, 1, 2, and 3 in a counterclockwise direction so that the $\alpha_g$ is dividing the regions 0, 1 and the regions 2, 3. On the punctured $\Sigma$, we let $\alpha^a_1 := \lambda$ and $\alpha^a_2 := \alpha_g$. Thus we define the bordered Heegaard diagram of the knot $K$ complement to be
\begin{displaymath}
\mathcal{H}(n) := ( \Sigma, \{ \alpha^a_1, \alpha^a_2 \} \cup \boldsymbol{\alpha}_0, \{ \beta_g \} \cup \boldsymbol{\beta}_0 ) 
\end{displaymath}

\begin{figure}
\begin{center}
\includegraphics{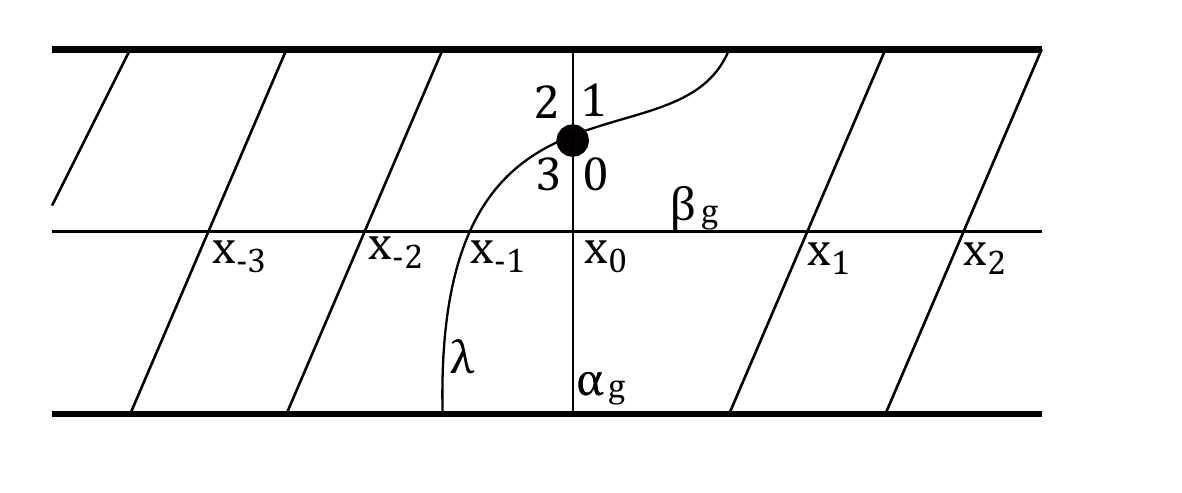}
\caption{The figure describes the winding region $\mathcal{W}$. The top bold line is identified to the bottom bold line so that it forms the two-dimensional one-handle. Our convention is that the left-end of $\mathcal{W}$ is attached to the region of $\Sigma_0$ near $w$ and the right-end is attached near $z$. The basepoint of the bordered Heegaard diagram is put on the region labelled 0.}
\label{fig:windingregion}
\end{center}
\end{figure}

Inside $\mathcal{W}$, the intersection point of $\alpha^a_2$ and $\beta_g$ will be called $x_0$. Then we label the intersection points of $\alpha^2_1$ and $\beta_g$ as follows. As travelling along the arc $\alpha^a_1$ in $\mathcal{W}$ between the region 2 and 3, the intersections points of $\alpha^2_1$ and $\beta_g$ will be labelled $x_{-1}, x_{-2}, \cdots$ in order. Likewise, the intersection points on the opposite side of $\alpha^a_2$ will be labelled $x_{+1}, x_{+2}, \cdots$ and so on. For simplicity, from now on we assume there are $n+1$ intersection points in $\mathcal{W}$ labelled as $x_{-\frac{n}{2}}, \cdots, x_{+\frac{n}{2}}$. Recall that $\mathfrak{S}_K$ denote the set of generators of $\widehat{CFK}(\mathcal{H}_K)$. We let $\mathfrak{S}(n)$ denote the set of generators of $\widehat{CFD} ( \mathcal{H}(n) )$. Then for each generator $\mathbf{x} \in \mathfrak{S}_K$, there are $n+1$ generators $\{ \mathbf{x}_i \}$ in $\mathfrak{S}(n)$, where the generator $\mathbf{x}_i$ is obtained by adding the point $x_i \in \mathcal{W}$ to $\mathbf{x}$. Note that for sufficiently large $n$, most generators in $\mathfrak{S}(n)$ can be written $\mathbf{x}_i$ for some $\mathbf{x} \in \mathfrak{S}_K$. The generators in $\mathfrak{S}(n)$ that cannot be written in the form of $\mathbf{x}_i$ for some $\mathbf{x} \in \mathfrak{S}_K$ are called \emph{exterior generators}. \\

\cite[Lemma 11.41]{LOT08} and \cite[Lemma 11.43]{LOT08} assert that there is a function $S : \mathfrak{S}(n) \rightarrow \frac{1}{2} \mathbb{Z}$ satisfying the following:
\begin{itemize}
  \item Let $A : \mathfrak{S}_K \rightarrow \mathbb{Z}$ be the (normalized) Alexander grading of elements in $\mathfrak{S}_K$. Then
  \begin{displaymath}
    S( \mathbf{x}_k ) = A (\mathbf{x} ) - k + \left( 
    \frac{(n+1) \cdot \mathrm{sgn}(k) }{2} \right),
  \end{displaymath}
  where $\textrm{sgn}(k) = -1, 0$, or 1 if $k<0, k=0$ or $k>0$ respectively. In particular, $S( \mathbf{x}_0 ) = A (\mathbf{x})$. 
  \item There exists a constant $c$ satisfying the following: let $\mathbf{y} \in \mathfrak{S}(n)$ with $| S (\mathbf{y}) | \geq c$. Then $\mathbf{y}$ is not an exterior generator, i.e, there exists $\mathbf{x} \in \mathfrak{S}_K$ such that $\mathbf{y} = \mathbf{x}_i$ for some $i \in \mathbb{Z}$. Moreover, the sign of $S (\mathbf{y})$ agrees with the sign of $i$.
\end{itemize} 

\subsection{Coefficient maps and their domains}

Let $n$ be a sufficiently large integer. \cite[Theorem 11.36]{LOT08} gives the algorithm to compute the homotopy type of $\widehat{CFD}( \mathcal{H}(n))$ of knot complement with framing $-n$ from the knot Floer complex $CFK^- (K)$. Let $\{ \xi_i \}$ be a vertically simplified basis. Then we may regard $\{ \xi_i \}$ to be a basis of $\imath_1 \widehat{CFD} (\mathcal{H} (n))$. For each arrow of length $l$ from $\xi_i$ to $\xi_{i+1}$, then we have basis elements $\kappa^i_1, \cdots, \kappa^i_k \in \imath_2 \widehat{CFD} (\mathcal{H}(n))$ that form the following sequence:
\begin{displaymath}
\xymatrix{
\xi_i \ar[r]^{\rho_1} & \kappa^i_1 & \cdots \ar[l]_{\rho_{23}} & \kappa^i_k \ar[l]_{\rho_{23}} & \kappa^i_{k+1} \ar[l]_{\rho_{23}} & \cdots \ar[l]_{\rho_{23}} & \kappa^i_l \ar[l]_{\rho_{23}} & \xi_{i+1}. \ar[l]_{\rho_{123}}
}
\end{displaymath}  

Similarly, let $\{ \eta_i \}$ be a set of horizontally simplified basis for $CFK^-(K)$. For an arrow of length $l$ from $\eta_i$ to $\eta_{i+1}$, we also have basis elements $\lambda^i_1, \cdots, \lambda^i_k \in \imath_2 \widehat{CFD} (\mathcal{H}(n))$. Again these form the following sequence:
\begin{displaymath}
\xymatrix{
\eta_i \ar[r]^{\rho_3} & \lambda^i_1 \ar[r]^{\rho_{23}} & \cdots \ar[r]^{\rho_{23}} & \lambda^i_k \ar[r]^{\rho_{23}} & \lambda^i_{k+1} \ar[r]^{\rho_{23}} & \cdots \ar[r]^{\rho_{23}} & \lambda^i_l \ar[r]^{\rho_2} & \eta_{i+1}.
}
\end{displaymath}

Recall that there are two distinguished basis elements $\xi_0$ and $\eta_0$, which generate homologies $H_* (C^v)$ and $H_* (C^h)$ respectively. The \emph{unstable chain} is a string of elements $\mu_1, \cdots, \mu_m \in \imath_2 \widehat{CFD} (\mathcal{H}(n))$ connecting $\xi_0$ and $\eta_0$ as follows:
\begin{displaymath}
\xymatrix{
\xi_0 \ar[r]^{\rho_1} & \mu_1 & \mu_2 \ar[l]_{\rho_{23}} & \cdots \ar[l]_{\rho_{23}} & \mu_m \ar[l]_{\rho_{23}} & \eta_0, \ar[l]_{\rho_3}
}
\end{displaymath}  
where $m = n+ 2\tau(K)$. \\

Moreover, if the framing equals two times of the smooth concordance invariant $\tau(K)$, then the unstable chain reduces to
\begin{displaymath}
\xymatrix{
\xi_0 \ar[r]^{\rho_{12}} & \eta_0.
}
\end{displaymath}

\begin{figure}
\begin{center}
\includegraphics[scale=0.6]{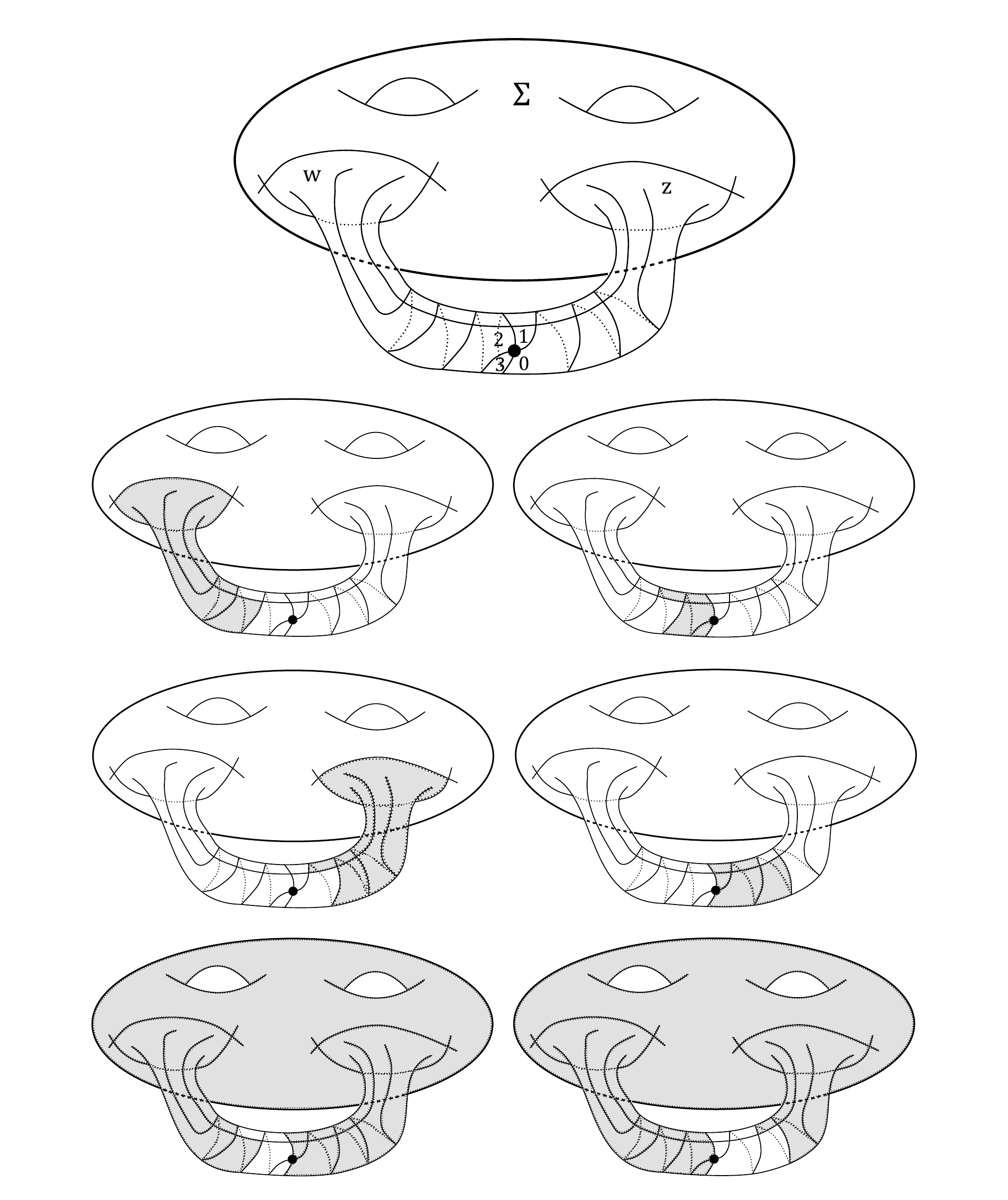}
\caption{The top figure represents the bordered Heegaard surface $\Sigma$ after attaching the winding region. Six diagrams with shaded regions depict examples of domains used in~\cite[Theorem 11.36]{LOT08}. These figures, beginning from the top right figure and in clockwise direction, represent type 1, type 2, type 3, type 4, type 5 and type 6 domains.}
\label{fig:domains}
\end{center}
\end{figure}

In the rest of this subsection, we will focus on the domains of the diagram $\mathcal{H}(n)$ that contributes to the differentials $\delta^1$ of $\widehat{CFD} (\mathcal{H}(n))$. \\

The \emph{coefficient map} $D_I : \mathfrak{S}(n) \rightarrow  \mathfrak{S}(n)$, indexed by an increasing sequence of consecutive integers $I = \{ i_0, \cdots, i_n \} \subset \{ 1,2,3 \}$ (including the empty sequence $\emptyset$), satisfies the following relation
\begin{displaymath}
\delta^1 = 1 \otimes D_{\emptyset} + \sum_i \rho_i \otimes D_i + \sum_{ \{ i,j | j=i+1 \} } \rho_{ij} \otimes D_{ij} + \rho_{123} \otimes D_{123}.
\end{displaymath}
The existence of a coefficient map $D_I$ implies there is a domain in $\mathcal{H}(n)$ between the associated elements adjacent to the boundary of $\mathcal{H}(n)$ labelled as $I$. In general, $I$ can be any interval in $\{ 0, 1, 2, 3 $\} with respect to the cyclic ordering, including the empty interval; e.g., $I=01$ or $30$ are possible. However, any coefficient map of interval including 0 will not have contribution towards the differential $\delta^1$. \\

First, recall that for any generator $\mathbf{x} \in \mathfrak{S}_K$ corresponds to $\mathbf{x}_k \in \widehat{CFD} (\mathcal{H}(n))$, $k = -\frac{n}{2}, \cdots, \frac{n}{2}$. Then the domains between these elements, whose modulo two count of index one moduli space equals one, can be described as below.  
\begin{displaymath}
\xymatrix{
\cdots & \mathbf{x}_{-3} \ar[l]_{D_{23}} & \mathbf{x}_{-2} \ar[l]_{D_{23}} & \mathbf{x}_{-1} \ar[l]_{D_{23}} & \mathbf{x}_0 \ar[l]_{D_3} \ar[r]^{D_1}& \mathbf{x}_1 \ar[r]^{D_{01}} & \mathbf{x}_2 \ar[r]^{D_{01}} & \mathbf{x}_3 \ar[r]^{D_{01}} & \cdots
}
\end{displaymath}
From the above sequence, the domains associated to the coefficient map $D_I$, $I=3$ and $23$, will be called \emph{type 1 domains} in this paper. The homology class of the domain of the coefficient map $D_{23} : \mathbf{x}_{-i} \rightarrow \mathbf{x}_{-i-1}$, $i \geq 1$, will be written as $\phi_i \in \pi_2 ( \mathbf{x}_{-i}, \mathbf{x}_{-i-1} )$, and the homology class of the domain of the coefficient map $D_3 : \mathbf{x}_0 \rightarrow \mathbf{x}_{-1}$ will also be written as $\phi_0 \in \pi_2 ( \mathbf{x}_0, \mathbf{x}_{-1} )$. Likewise, the homology classes of the domains of the coefficient map $D_1 : \mathbf{x}_0 \rightarrow \mathbf{x}_1$ and $D_{01} : \mathbf{x}_i \rightarrow \mathbf{x}_{i+1}$ will be written as $\overline{\phi}_0$ and $\overline{\phi}_i$, respectively. In this paper, these two kinds of domains are called \emph{type 2 domains}. See Figure~\ref{fig:domains}. \\

There are coefficient maps $D_{23} : \mathbf{x}_i \rightarrow \mathbf{x}_{i-1}$, $i \geq 2$. The domain of this map can be understood as the entire surface $\Sigma$ minus the domain of the homology class $\overline{\phi}_i$. The homology class of this coefficient map will be denoted by $\psi_i$. By taking advantage of the symmetry of the diagram, there are coefficient maps $D_{01} : \mathbf{x}_{-i} \rightarrow \mathbf{x}_{-i+1}$, $i \geq 2$. Again the homology classes associated to these coefficient maps are denoted by $\overline{\psi}_i$, whose domains can be graphically depicted as $\Sigma$ minus the domain of the homology class $\phi_{i-1}$. We will call the domains of the homology classes $\psi_i$ and $\overline{\psi}_i$ \emph{type 3 domains} and \emph{type 4 domains}, respectively. \\

Lastly, there is a coefficient map $D_I$ of the empty interval $I =\emptyset$. These maps appear if there is a vertical or horizontal arrow between simplified bases. Suppose there is a vertical arrow $\mathbf{x} \rightarrow \mathbf{y}$ of length $l$. Then we have a coefficient map $D_{\emptyset} : \mathbf{x}_i \rightarrow \mathbf{y}_{i-l}$, $i > l$. To describe the domain, we need to consider the winding region $\mathcal{W}$ detached from $\Sigma$. Then it is $S^2$ minus two punctures, with each puncture previously attached to the points $z$ or $w$. Cutting open the winding region along the circle $\alpha_g$, we obtain two annuli. For the annulus that contains the region labelled as 0 and 1, the interior of the annulus has a part of $\beta_g$ and $\lambda = \alpha^a_1$, and its outer boundary is identified to $\alpha_g$. Then the interior has intersection points $x_1, x_2, \cdots$, and between them we have a homology class $\zeta_i \in \widetilde{\pi}_2 (x_i, x_{i-1})$ (thought as a homology class in the winding region, see Figure~\ref{fig:hom_class_wind}). Consider the homology class $\zeta_i * \zeta_{i-1} * \cdots * \zeta_{i-l+1}$. The domain of this homology class has multiplicity $l$ near the puncture. The gluing of this domain with the domain of the homology class between $\mathbf{x}$ and $\mathbf{y}$ as in $CFK^-(K)$, which has multiplicity $l$ near $z$ as well, gives the domain of the homology class $\varphi_i \in \pi_2 (\mathbf{x}_i, \mathbf{y}_{i-l})$, $i>l$. (If $i=l$, there is a coefficient map $D_0 : \mathbf{x}_l \rightarrow \mathbf{y}_0$.) The domains of the homology classes $\varphi_i$, $i \geq l$ will be called \emph{type 5 domains}. Likewise, the symmetry of the diagram will give coefficient maps $D_{\emptyset} : \mathbf{x}_{-i} \rightarrow \mathbf{y}_{-i+l}$, $i >l$ for a horizontal arrow $\mathbf{x} \rightarrow \mathbf{y}$ of length $l$. (Again, there is a coefficient map $D_2 : \mathbf{x}_{-l} \rightarrow \mathbf{y}_0$ if $i=l$.) The homology classes of these coefficient maps are written $\overline{\varphi}_i \in \pi_2 (\mathbf{x}_{-i} , \mathbf{y}_{-i+l})$, $i \geq l$. The domains of $\overline{\varphi}_i$ will be called \emph{type 6 domains}. 

\begin{figure}
\begin{center}
\includegraphics[scale=0.7]{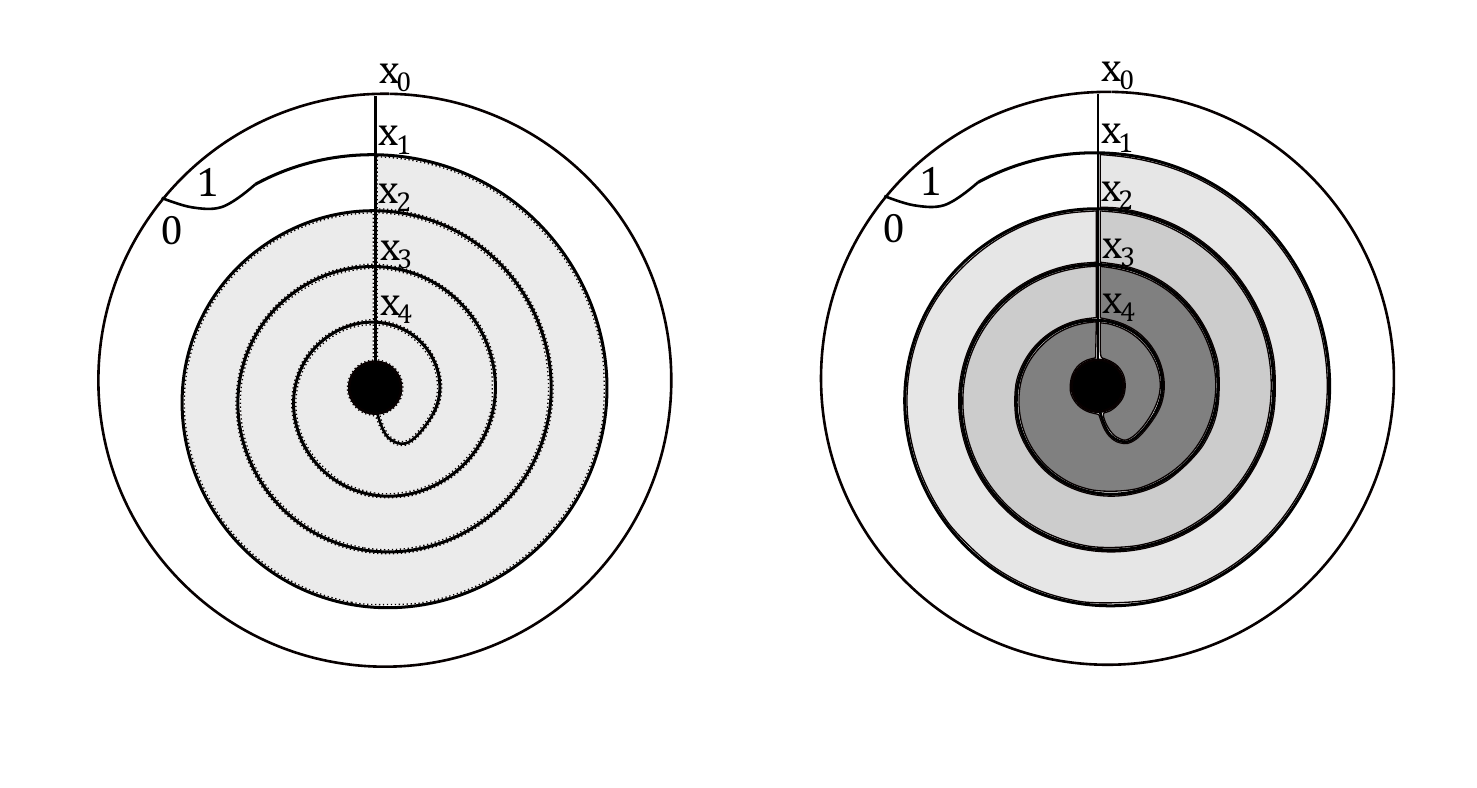}
\caption{We illustrate the domains of homology class $\zeta_2 \in \widetilde{\pi_2} (x_2, x_1)$ on the right, and the domain of $\zeta_4 * \zeta_3 * \zeta_2$ on the left. Note that the shading of each region is darker as the multiplicity of the region increases. The middle black dot in each diagram should be attached near point $z \in \Sigma_0$.}
\label{fig:hom_class_wind}
\end{center}
\end{figure}

\begin{rem}
The coefficient map of the interval 123 only appears whenever there is a horizontal arrow $\mathbf{x} \rightarrow \mathbf{y}$ of length one. In this case, the map is
\begin{displaymath}
D_{123} : \mathbf{x}_0 \rightarrow \mathbf{y}_1.
\end{displaymath}
\end{rem}

\cite[Figure 11.13]{LOT08} illustrates the type-$D$ module $\widehat{CFD} (\mathcal{H}(n))$ obtained by the domains considered above. The complex is quite involved, but collapsing coefficient maps $D_{\emptyset}$ will result in the simplified complex as stated in the beginning of this subsection.  

\subsection{Gradings}
The bordered Floer package of torus algebra is also endowed with relative grading group $G$. Here we describe the grading group of a single boundary case. Let $G$ be generated by triples $(j;p,q)$, $j,p,q \in \frac{1}{2} \mathbb{Z}$. The multiplication is defined as
\begin{displaymath}
(j_1; p_1, q_1) \cdot (j_2; p_2, q_2) = \left( j_1 + j_2 + 
\left| 
  \begin{array}{cc}
  p_1 & q_1 \\
  p_2 & q_2 
  \end{array}
\right| ; 
p_1 + p_2, q_1 + q_2 \right).
\end{displaymath}
The distinguished central element $(1;0,0)$ is written as $\lambda$. \\

For each element in the torus algebra $\mathcal{A}$, the grading is given by the following rule. For $\rho_i$, $i=1,2,3$, we define the grading $\mathrm{gr}(\rho_i) \in G$ as follows.
\begin{eqnarray*}
\mathrm{gr}(\rho_1) & = & \left( - \frac{1}{2} ; \frac{1}{2}, - \frac{1}{2} \right) \\
\mathrm{gr}(\rho_2) & = & \left( - \frac{1}{2} ; \frac{1}{2}, \frac{1}{2} \right) \\
\mathrm{gr}(\rho_3) & = & \left( - \frac{1}{2} ; - \frac{1}{2}, \frac{1}{2} \right) ; \\
\end{eqnarray*}
and we let the grading respects the multiplication so that $\mathrm{gr}(\rho_I \cdot \rho_J) = \mathrm{gr}(\rho_I) \cdot \mathrm{gr}(\rho_J)$. \\

For a Heegaard diagram $\mathcal{H}$, suppose that there is a homology class $B \in \pi_2 (\mathbf{x}, \mathbf{y})$, possibly adjacent to the boundary $\partial \mathcal{H}$. If so, the boundary adjacency can be written as  $c_1 \cdot \rho_1 + c_2 \cdot \rho_2 + c_3 \cdot \rho_3$. Then the grading $\mathrm{gr}(B)$ of $B$ is defined as
\begin{displaymath}
\mathrm{gr}(B) := ( -e(B) -n_{\mathbf{x}}(B) -n_{\mathbf{y}}(B) ; \frac{c_1 + c_2 -c_3}{2}, \frac{-c_1 + c_2 + c_3}{2} ).
\end{displaymath}
Then we define $\mathrm{gr}(\mathbf{y}) := \mathrm{gr}(\mathbf{x}) \mathrm{gr} (B)$. Of course this grading is not well-defined, but we can remove the uncertainty of the grading by taking the grading set not on $G$, but on $G / P$, where $P := \{ \mathrm{gr}(B) | B \in \pi_2 (\mathbf{x}, \mathbf{x}) \}$ is the subgroup of $G$. Under this setting, the grading of type-$D$ module $\widehat{CFD}(\mathcal{H})$ is known to be
\begin{displaymath}
\mathrm{gr}( \partial \mathbf{x} ) = \lambda^{-1} \mathrm{gr} (\mathbf{x}).
\end{displaymath}
In terms of coefficient map, $D_I : \mathbf{x} \rightarrow \mathbf{y}$ enables to track the grading difference between $\mathbf{x}$ and $\mathbf{y}$; specifically, 
\begin{displaymath}
\lambda^{-1} \mathrm{gr} (\mathbf{x}) = \mathrm{gr}(\partial \mathbf{x}) = \mathrm{gr} (\rho_I \mathbf{y}) = \mathrm{gr}(\rho_I) \mathrm{gr} (\mathbf{y}) 
\end{displaymath}
and this leads to
\begin{displaymath}
\mathrm{gr}( \mathbf{y} ) = \lambda^{-1} \mathrm{gr}(\rho_I)^{-1} \mathrm{gr}(\mathbf{x}). 
\end{displaymath}
For $(j;a,b) \in G$, $j$ is called the \emph{Maslov component} and $(a,b)$ is called the \emph{$spin^c$-component}. \\

Lastly, the grading set of $\widehat{CFA}(\mathcal{H}_1) \boxtimes \widehat{CFD}(\mathcal{H}_2)$ is the double quotient group $P_1 \backslash G / P_2$, where $P_i$ is the subgroup of $G$ generated by gradings of periodic domains in $\mathcal{H}_i$.

\section{Heegaard diagram of connected sum}
\label{sec:diagram}

Let $L_1$ and $L_2$ be two components of Hopf link $\mathcal{L} \subset S^3$. For a knot $K$ in $S^3$, taking connected sum of $K$ and $L_1$ will result in a 2-link $\mathcal{L}_K$ consisting of the knot $K$ and its meridian $L_2$. \\   

The Heegaard diagram of a connected sum of two knots has been explained in~\cite[Section 7]{OZ04}, and we will draw the doubly bordered Heegaard diagram of $S^3 \backslash \nu (\mathcal{L}_K)$ by applying the same procedure. First note the doubly bordered Heegaard diagram of the Hopf link $\mathcal{L}$ complement in $S^3$ is the same as the diagram of the identity bimodule as in \cite[Figure 13]{LOT11}. See the top figure of Figure~\ref{fig:connectedsum}. The diagram will be written as $\Sigma_{\mathcal{L}}$. For convenience, the two punctures of $\Sigma_{\mathcal{L}}$ will be referred to the ``left'' and ``right'' punctures. $\Sigma_{\mathcal{L}}$ has four $\alpha$-curves, namely $\widetilde{\alpha}^{a,L}_1$, $\widetilde{\alpha}^{a,L}_2$, $\widetilde{\alpha}^{a,R}_1$, and $\widetilde{\alpha}^{a,L}_2$. It also has two $\beta$-circles called $\widetilde{\beta}_1$ and $\widetilde{\beta}_2$, and an arc $\widetilde{ \mathbf{z} }$ connecting two punctures as well. Regard $\widetilde{\alpha}^{a,L}_1$ and $\widetilde{\alpha}^{a,R}_1$ as the longitudinal curves of left and right knot components of $\mathcal{L}$; thus $\widetilde{\alpha}^{a,L}_2$ and $\widetilde{\alpha}^{a,R}_2$ are the meridional ones. \\

We modify the diagram to get the doubly bordered Heegaard diagram of $\mathcal{L}_K$ complement. First, make a hole $C$ on $\widetilde{\alpha}^{a,L}_1$ near the left puncture of the diagram $\Sigma_{\mathcal{L}}$. Then draw a curve $\widetilde{\alpha}_s$ whose ends are lying on $\partial C$, parallel to $\widetilde{\alpha}^{a,L}_2$. Then the diagram has five intersection points defined as
\begin{displaymath}
\mathbf{a} := \widetilde{\alpha}^{a,L}_2 \cap \widetilde{\beta}_1, \quad \mathbf{b} := \widetilde{\alpha}^{a,R}_1 \cap \widetilde{\beta}_1, \quad \mathbf{c} := \widetilde{\alpha}^{a,L}_1 \cap \widetilde{\beta}_2, \quad \mathbf{d} := \widetilde{\alpha}^{a,R}_2 \cap \widetilde{\beta}_2, \quad \mathbf{a_s} := \widetilde{\alpha}_s \cap \widetilde{\beta}_1. 
\end{displaymath}
Now, for a bordered Heegaard diagram $\{ \Sigma_K, \overline{\boldsymbol{\alpha}}_K, \boldsymbol{\beta}_K, z_K \}$ of a knot $K$ complement (with sufficiently large negative framing), we identify the puncture of $\Sigma_K$ to $C$. Precisely, the identification is made so that
\begin{itemize}
  \item the ends of meridional curve $m \in \overline{ \boldsymbol{\alpha} }_K$ is connected to the ends of $\widetilde{\alpha}_s$, and the ends of longitudinal curve $\lambda \in \overline{ \boldsymbol{\alpha} }_K$ to the ends of $\widetilde{\alpha}^{a,L}_1$;
  \item the region of $\Sigma_K$ labelled as 2 is glued to the region of $\Sigma_{\mathcal{L}}$ labelled as 2 adjacent to the left puncture; and  
  \item the region of $\Sigma_K$ labelled as 3 is glued to the region of $\Sigma_{\mathcal{L}}$ labelled as 3 adjacent to the left puncture.
\end{itemize}
Therefore, the region labelled as 1 is glued to the region of $\Sigma_{\mathcal{L}}$ labelled as 2 adjacent to the right puncture. The resulting diagram will be henceforth called $\Sigma$. Note that $\Sigma$ has two punctures, again called left and right punctures. \\

\begin{figure}
\begin{center}
\includegraphics[scale=0.8]{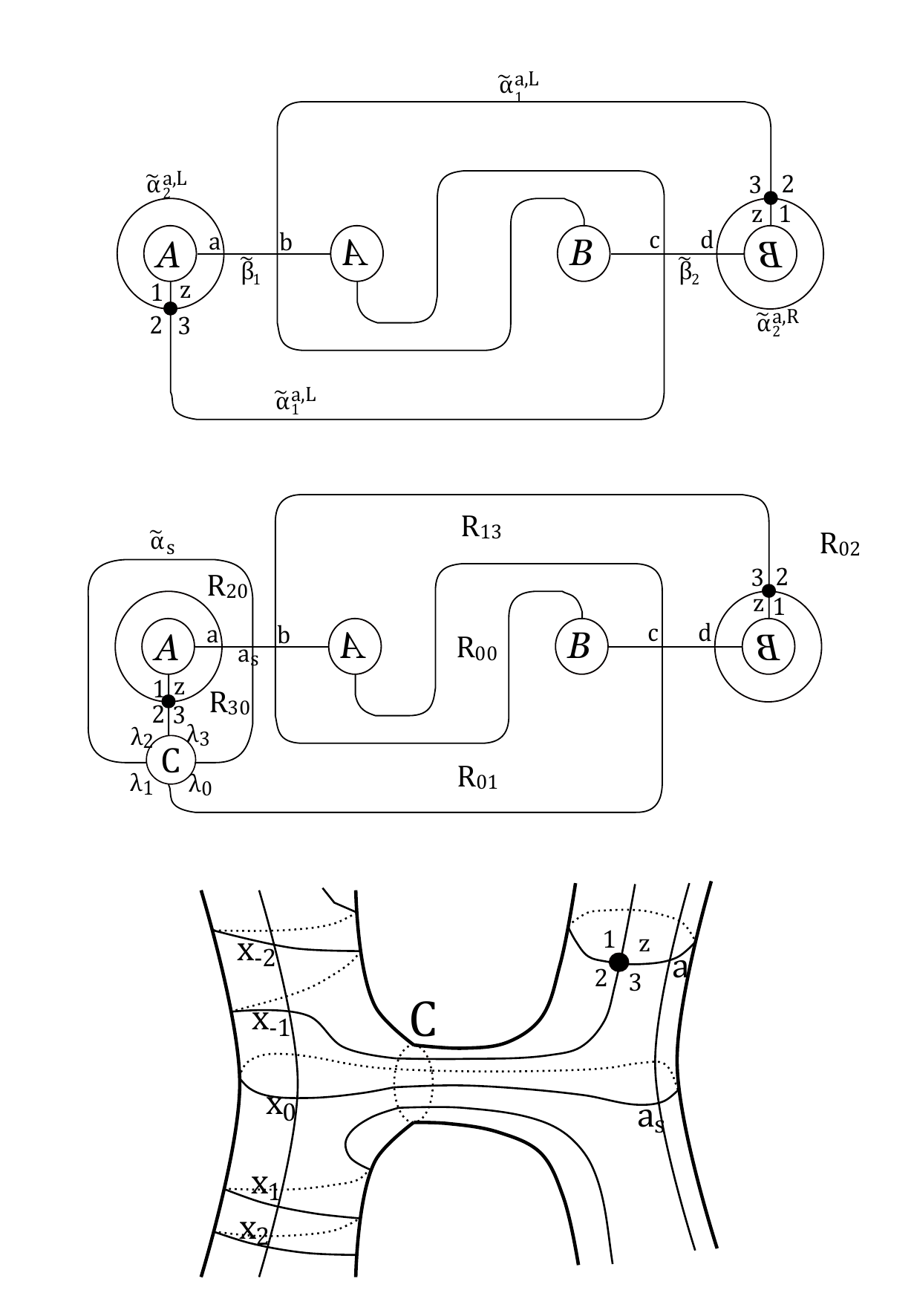}
\caption{The top figure is the doubly bordered Heegaard diagram $\Sigma_{\mathcal{L}}$ of Hopf link complement, by identifying two pairs of circles labelled $A$ and $B$. The labelling of curves and  intersection points are written as above. The two black dots represent the left and right boundaries of $\Sigma$. The middle figure shows the modified diagram after making a hole $C$ and drawing a curve $\widetilde{\alpha}_s$. The bottom figure shows the region near $C$ of the resulting diagram after the modification.}
\label{fig:connectedsum}
\end{center}
\end{figure}

Then the resulting diagram has the following data that gives the doubly bordered Heegaard diagram $\mathcal{H}_{\mathcal{L}_K} (n) := \{ \Sigma, \overline{\boldsymbol{\alpha}}, \boldsymbol{\beta}, \mathbf{z} \}$ of link $\mathcal{L}_K = K \coprod L_2$ complement, such that $K$ has framing $-n$ and the meridian $L_2$ has framing zero.
\begin{itemize}
  \item The surface $\overline{\Sigma}$ with two boundary components $\partial_L \overline{\Sigma}$ and $\partial_R \overline{\Sigma}$, which are the boundaries of left and right punctures, respectively.
  \item Let $\alpha^{a,L}_1$ be the curve obtained by connecting $\lambda$ and $\widetilde{\alpha}^{a,L}_1$, and $\alpha_s$ be a circle obtained by connecting $\widetilde{\alpha}_s$ and $m$. The set of $\alpha$-curves $\overline{\boldsymbol{\alpha}}$ is defined as
    \begin{displaymath}
    \overline{\boldsymbol{\alpha}} := \left(  \overline{\boldsymbol{\alpha}}_K \backslash \{ m, \lambda \} \right) \ \cup \ \{ \widetilde{\alpha}^{a,R}_1, \widetilde{\alpha}^{a,R}_2, \alpha^{a,L}_1, \widetilde{\alpha}^{a,L}_2 \}  \ \cup \ \{ \alpha_s \}.
    \end{displaymath}
    By rearranging indices, we let $\alpha^{a,R}_1 := \widetilde{\alpha}^{a,R}_1$, $\alpha^{a,R}_2 := \widetilde{\alpha}^{a,R}_2$, and $\alpha^{a,L}_2 := \widetilde{\alpha}^{a,L}_2$.
  \item The set of $\beta$-circles $\boldsymbol{\beta}$ carries over; i.e., $\boldsymbol{\beta} := \boldsymbol{\beta}_K \cup \{ \widetilde{\beta}_1, \widetilde{\beta}_2 \}$. Again, we write $\beta_1 := \widetilde{\beta}_1$ and $\beta_2 := \widetilde{\beta}_2$ for notational simplicity.
  \item The arc $\mathbf{z}$ is the same as $\widetilde{\mathbf{z}}$.
\end{itemize}
This diagram has two boundary components; the left (respectively, right) boundary algebra is labelled $\rho_I$ (respectively $\sigma_I$), where $I = \{ 1, 2, 3, 12, 23, 123 \}$. \\

A reader may observe that for $\mathbf{x}_k \in \mathfrak{S}(n)$, we can associate generators of $\widehat{CFDD}( \mathcal{H}_{\mathcal{L}_K} (n) )$ to $\mathbf{x}_k$, as described below. 
\begin{itemize}
  \item If $k \neq 0$, $\mathbf{x}_k \mathbf{a_s d}$ is a tuple of intersection points between $\overline{\boldsymbol{\alpha}}$ and $\boldsymbol{\beta}$ consisting of intersection points of $\mathbf{x}_k$ and $\mathbf{a_s, d}$. These generators belong to $\imath_2 \jmath_1 \widehat{CFDD} ( \mathcal{H}_{\mathcal{L}_K} (n) )$.
  \item If $k=0$, $\mathbf{x}_0 \mathbf{a d}$ and $\mathbf{x}_0 \otimes \mathbf{b c}$ are tuples of intersection points consisting of $\mathbf{x}_0$ and $\mathbf{a, d}$ (respectively, $\mathbf{b, c}$). The generators $\mathbf{x}_0 \mathbf{ad} \in \imath_1 \jmath_1 \widehat{CFDD} ( \mathcal{H}_{\mathcal{L}_K} (n) )$ and $\mathbf{x}_0 \mathbf{bc} \in \imath_2 \jmath_2 \widehat{CFDD} ( \mathcal{H}_{\mathcal{L}_K} (n) )$.
\end{itemize}  
We have $\mathbb{F}_2$-vector space isomorphisms $\imath_1 \jmath_1 M \cong CFK^-(K)$ and $\imath_2 \jmath_2 M \cong CFK^-(K)$.

\section{Holomorphic curves of domains obtained by gluing}
\label{sec:main}

In this section, we compute the differential $\delta^1 :M \rightarrow \mathcal{A}_L \otimes \mathcal{A}_R \otimes M$ of the type-$DD$ module $M : = \widehat{CFDD}(\mathcal{H}_{\mathcal{L}_K} (n) )$. \\

The strategy of the computation is as follows: The bordered Heegaard surface $\Sigma$ is basically obtained by gluing two surfaces $\Sigma_K$ and $\Sigma_{\mathcal{L}}$. Every non-provincial domain of $\Sigma_K$ must be are glued to appropriate regions of $\Sigma_{\mathcal{L}}$ minus the hole $C$. Note that there are six of such regions in $\Sigma_{\mathcal{L}} \backslash C$, and we will name them as below (see the middle of Figure~\ref{fig:connectedsum}):
\begin{itemize}
  \item $R_{20}$, the region adjacent to the left boundary labelled 2;
  \item $R_{30}$, the region adjacent to the left boundary labelled 3;
  \item $R_{13}$, the region adjacent to the left boundary labelled 1 and the right boundary labelled 3;
  \item $R_{02}$, the region adjacent to the right boundary labelled 2;
  \item $R_{01}$, the region adjacent to the right boundary labelled 1;
  \item $R_{00}$, the region contains the arc $\mathbf{z}$;
\end{itemize}
As explained above, all non-provincial domains of $\Sigma_K$ can be classified into six types, and the domains of each type will be glued to one (or more) of the above regions listed. The resulting domains will possibly contribute to the differential $\delta^1$, as long as the expected dimension equals one. \\ 

{\bf Type 1 domain}. We investigate domains obtained by gluing type 1 domain of $\Sigma_K$ to region in $\Sigma_{\mathcal{L}} \backslash C$ with Maslov index one. Let $\mathbf{x} \in \mathfrak{S}_K$. Then there are following domains to consider:
\begin{itemize}
  \item The domain of $\phi_i \in \pi_2 (\mathbf{x}_{-i}, \mathbf{x}_{-i-1})$, $i \geq 1$, glued to $R_{20}+R_{30}$;
  \item The domain of $\phi_0 \in \pi_2 (\mathbf{x}_0, \mathbf{x}_{-1})$ glued to $R_{30}$.
\end{itemize}

The second domain listed above is rectangular, thus the holomorphic representative of $\phi_0 \in \pi_2 (\mathbf{x}_0, \mathbf{x}_{-1})$ results $\# \mathcal{M} (\mathbf{x}_0 \mathbf{ad}, \mathbf{x}_{-1} \mathbf{a_s d} ; \rho_3) = 1$. So it remains to investigate the first domain.

\begin{lem}
For $\mathbf{x} \in \mathfrak{S}_K$, the moduli space $\mathcal{M} ( \mathbf{x}_{-i} \mathbf{a_s d}, \mathbf{x}_{-i-1} \mathbf{a_s d} ; (\rho_2, \rho_3) )$, $i \geq 1$, has modulo two count one.
\label{lem:dilation1}
\end{lem}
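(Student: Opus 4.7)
The plan is to identify the domain underlying $\mathcal{M}(\mathbf{x}_{-i}\mathbf{a_sd}, \mathbf{x}_{-i-1}\mathbf{a_sd}; (\rho_2,\rho_3))$, verify that its expected dimension is one, and then count holomorphic representatives by a neck-stretching argument along the gluing circle $\partial C$ that joins $\Sigma_K$ into $\Sigma_{\mathcal{L}}$. Since $\mathbf{a_s}$ and $\mathbf{d}$ are common to both endpoints, the only nontrivial motion is $\mathbf{x}_{-i} \to \mathbf{x}_{-i-1}$ in the winding region, so the domain must be supported in $\Sigma_K$ together with whatever boundary-adjacent regions on the $\Sigma_{\mathcal{L}}$ side realize the chord sequence $(\rho_2,\rho_3)$. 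The natural candidate is
\[
B \;=\; \phi_i \,+\, R_{20} \,+\, R_{30},
\]
where $\phi_i \in \pi_2(\mathbf{x}_{-i},\mathbf{x}_{-i-1})$ is the type 1 annular domain in $\Sigma_K$ whose single boundary Reeb chord on $\partial\Sigma_K$ was $\rho_{23}$. After the gluing that arc is absorbed into the interior of $\Sigma$, while the new boundary contributions on $\partial_L\overline\Sigma$ coming from $R_{20}$ and $R_{30}$ are sequenced in the $\mathbb{R}$-direction as exactly $(\rho_2,\rho_3)$.

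To verify the index, I would apply the formula of [LOT08, Definition 5.61] and bootstrap from the original count $\mathrm{ind}(\phi_i,(\rho_{23}))=1$: the additional rectangles $R_{20}\cup R_{30}$ contribute a controlled change in $e(B)$, and the change $|(\rho_2,\rho_3)|+\iota((\rho_2,\rho_3))-(|(\rho_{23})|+\iota((\rho_{23})))=\tfrac{1}{2}+L(\rho_2,\rho_3)=1$ exactly compensates, giving $\mathrm{ind}(B,(\rho_2,\rho_3))=1$. The multiplicities near the new corners $\mathbf{a_s}$ and $\mathbf{d}$ are zero, so $n_{\mathbf{x}}(B)=n_{\mathbf{y}}(B)$ agree with the corresponding contributions for $\phi_i$.

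For the count itself, I would stretch the almost complex structure along a simple closed curve in $\Sigma$ parallel to $\partial C$, separating the $\Sigma_K$-part from the $\Sigma_{\mathcal{L}}$-part. In the limit $T\to\infty$, any holomorphic representative of $B$ breaks into a pair: a curve $u_K$ in $\Sigma_K \times [0,1] \times \mathbb{R}$ representing $\phi_i$ with a single $e$-puncture asymptotic to $\rho_{23}$, and a curve $u_{\mathcal{L}}$ in $\Sigma_{\mathcal{L}} \times [0,1] \times \mathbb{R}$ supported in $R_{20}\cup R_{30}$ with matching $\rho_{23}$-asymptotics on the glued side and the pair $(\rho_2,\rho_3)$ on $\partial_L\overline\Sigma$. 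The moduli space containing $u_K$ has modulo-two count one, since it realizes precisely the coefficient map $D_{23}$ in the knot complement [LOT08, Theorem 11.36]. The piece $u_{\mathcal{L}}$ is a bigon-type boundary configuration whose holomorphic representative is unique up to translation by a Riemann-mapping-theorem argument. Thus the paired count is $1\cdot 1 = 1$.

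The main obstacle is that $u_{\mathcal{L}}$ is a genuine \emph{boundary degeneration}: it has no interior $\alpha$–$\beta$ corners and meets $\partial_L\overline\Sigma$ only along $\alpha$-arcs, so the standard LOT pairing theorem does not directly apply. I would therefore need to verify transversality of the boundary-degeneration moduli space, rule out bubbling of spheres or additional disks in the compactification, and establish a gluing theorem producing a unique holomorphic representative of $B$ from each transverse pair $(u_K,u_{\mathcal{L}})$. As the introduction emphasizes, extending the pairing theorem to admit such boundary degenerations is one of the central technical innovations of this paper, so this is where the bulk of the analysis will live; once it is in place, the lemma follows immediately.
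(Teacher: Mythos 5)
Your domain identification ($B = \phi_i + R_{20} + R_{30}$), your index bookkeeping, and the basic strategy of stretching the neck along $\partial C$ all match what the paper does, but the final step contains a genuine misdiagnosis. You describe $u_{\mathcal{L}}$ (the piece living in $R_{20} \cup R_{30}$) as a ``genuine boundary degeneration'' with ``no interior $\alpha$--$\beta$ corners,'' and you conclude that the standard pairing theorem does not apply and that the main technical burden is here. In fact each of $R_{20}$ and $R_{30}$ is an ordinary rectangle with $\alpha$--$\beta$ corners: $R_{20}$ connects $\mathbf{a_s}$ to $\mathbf{a}$ and $R_{30}$ connects $\mathbf{a}$ back to $\mathbf{a_s}$ (both corners are intersections of $\alpha$-curves with $\widetilde{\beta}_1$). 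So the $\Sigma_{\mathcal{L}}$ side is not a boundary degeneration at all --- it is two ordinary bigons $v_1 \in \mathcal{M}(\mathbf{a_s},\mathbf{a};R_{20})$ and $v_2 \in \mathcal{M}(\mathbf{a},\mathbf{a_s};R_{30})$, each with one $\alpha$--$\beta$ corner. The boundary-degeneration technology that the introduction highlights is indeed needed in this paper, but only later, for the $\rho_{123}\sigma_{123}$ term of the differential; it plays no role in this lemma, which is settled by the standard LOT pairing theorem with the time-dilation ($T$-matched pairs) argument from Chapter 9 of \cite{LOT08}.

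Relatedly, your description of $u_K$ as having a \emph{single} $e$-puncture asymptotic to $\rho_{23}$ is the wrong model for the matched pair. Because the $\Sigma_{\mathcal{L}}$ side consists of two bigons sitting at two distinct $\mathbb{R}$-heights, the $\Sigma_K$-side curve must be asymptotic to two separate chords $\lambda_2,\lambda_3$ on $\partial C$ at two distinct heights, i.e.\ the relevant interpretation is $\mathcal{M}(\mathbf{x}_{-i},\mathbf{x}_{-i-1};(\lambda_2,\lambda_3))$, a bigon with two boundary punctures. This is exactly where the count comes from: the $\Sigma_K$-side bigon has a \emph{fixed} height difference $\mathrm{ev}_{\lambda_2,\lambda_3}(u)=t_0$, while the two $\Sigma_{\mathcal{L}}$-side bigons have a \emph{free} relative height, so the $T$-matched moduli space is a transverse fibered product whose mod-$2$ count stabilizes at $1$ as $T\to\infty$. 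If you keep the single-chord $\rho_{23}$ model for $u_K$, the heights of the two $\Sigma_{\mathcal{L}}$-side bigons have nothing to match against, and the argument does not close.
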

\begin{proof}
We will apply the pairing theorem and use notations introduced in \cite[Chapter 9]{LOT08}. Let us consider a domain of $\phi_i \in \pi_2 (\mathbf{x}_{-i}, \mathbf{x}_{-i-1})$, $i \geq 1$, glued to $R_{20} + R_{30}$. In this proof, this domain will be called $D$. The domain $D$ is divided by $\partial C$. We then have two Reeb chords on $\partial C \cap D$, which will be labelled $\lambda_2$ and $\lambda_3$ so that $\lambda_j$ is lying on the region adjacent to $\rho_i$, $j=1,2$. See Figure~\ref{fig:dilation1} for the illustration. Recall that for the holomorphic representative of $\phi_i \in \pi_2 (\mathbf{x}_{-i}, \mathbf{x}_{-i-1})$, $i \geq 1$, the only valid  interpretation was $\mathcal{M} ( \mathbf{x}_{-i}, \mathbf{x}_{-i-1} ; (\lambda_2, \lambda_3) )$, which is basically a bigon \cite[Lemma 11.46]{LOT08}. This implies $\mathcal{M}( \mathbf{x}_{-1}, \mathbf{x}_{-i-1} ; ( \lambda_2, \lambda_3) )$ has a single curve $u$. Then the height difference between these two chords is written as
\begin{displaymath}
\mathrm{ev}_{\lambda_2, \lambda_3}= \mathrm{ev}_{\lambda_2} (u) - \mathrm{ev}_{\lambda_3} (u),
\end{displaymath}
which is a fixed real number, say $t_0$.  \\
On the other hand, let us consider the domains $R_{20}$ and $R_{30}$. The relevant moduli space $\mathcal{M} ( \mathbf{a_s}, \mathbf{a_s} ; R_{20}+ R_{30} )$, in order for the pairing theorem, has a pair of curves $v_1$ and $v_2$, where $v_1$ is associated to the moduli space $\mathcal{M} ( \mathbf{a_s}, \mathbf{a} ; R_{20} )$ and where $v_2$ is to $\mathcal{M} ( \mathbf{a}, \mathbf{a_s} ; R_{30} )$. Then the moduli space of $T$-matched pairs is
\begin{displaymath}
\widetilde{\mathcal{MM}}( T; \mathbf{x}_{-i}, \mathbf{x}_{-i-1} ; \mathbf{a_s}, \mathbf{a_s}) = \widetilde{\mathcal{M}} (\mathbf{x}_{-i}, \mathbf{x}_{-i-1}) \times_{T \cdot ev_1 = ev_2} \widetilde{\mathcal{M}} (\mathbf{a_s}, \mathbf{a_s} ).
\end{displaymath}
Then the moduli space 
\begin{displaymath}
\mathcal{MM} ( T; \mathbf{x}_{-i}, \mathbf{x}_{-i-1} ; \mathbf{a_s}, \mathbf{a_s}) : = \widetilde{\mathcal{MM}}( T; \mathbf{x}_{-i}, \mathbf{x}_{-i-1} ; \mathbf{a_s}, \mathbf{a_s}) / \mathbb{R}
\end{displaymath}
consists of the curve $u$ and a pair of disks from $R_{20}$ and $R_{30}$ with height difference $T \cdot t_0$. The modulo two count of this moduli space is unchanged for the sufficiently large $T$, thus letting $T \rightarrow \infty$ the height difference is going to $\infty$, so it proves the moduli space of the fibered product 
\begin{displaymath}
\mathcal{M} ( \mathbf{x}_{-i} \mathbf{a_s}, \mathbf{x}_{-i-1} \mathbf{a_s} ; (\rho_2, \rho_3)) 
\end{displaymath}
has modulo two count one, as desired.
\end{proof}

\begin{figure}
\begin{center}
\includegraphics[scale=0.6]{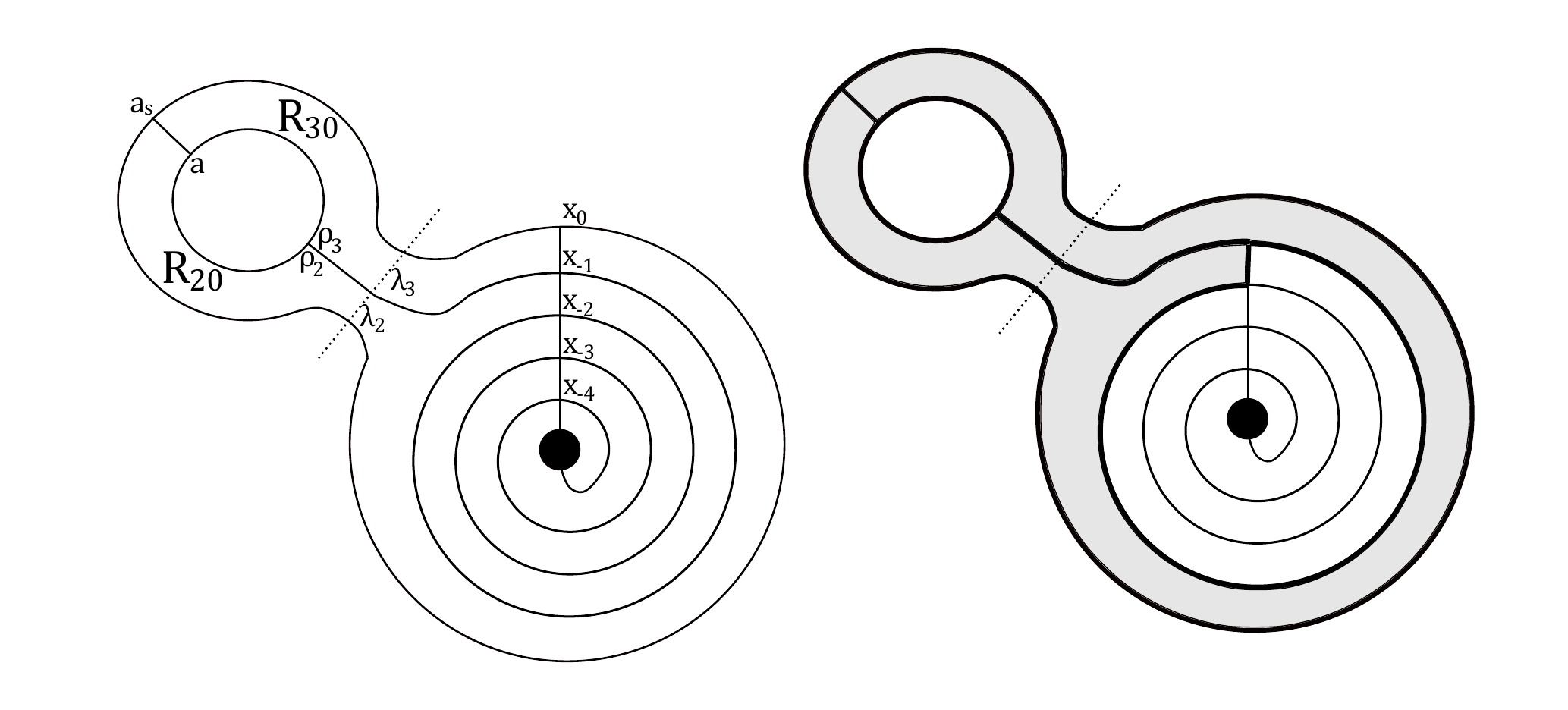}
\caption{The right figure shows the labelling of Lemma~\ref{lem:dilation1}. The dashed line represents $\partial C$. The upper left part of $\partial C$ will play a role of type-$A$ and the bottom right part type-$D$. The left figure implies that the domain $D$ can be regarded as an annulus.}
\label{fig:dilation1}
\end{center}
\end{figure}

\begin{rem}
The domain $D$ can be regarded as an annulus, whose inner boundary consists of $\boldsymbol{\alpha}$-curves and a segment of $\boldsymbol{\beta}$-curve on the winding region of the $\Sigma_K$ part of the diagram. On the other hand, the outer boundary solely consists of $\boldsymbol{\alpha}$-curves. However, when making a cut along $\boldsymbol{\beta}$-curve from $\mathbf{a_s}$, we obtain a holomorphic involution of $D$ interchanging the inner and outer boundaries of the annulus. This also proves Lemma~\ref{lem:dilation1}.
\end{rem}

{\bf Type 2 domain}.  We study the following domains. For $\mathbf{x} \in \mathfrak{S}_K$,  
\begin{itemize}
  \item the domains of $\overline{\phi}_i \in \pi_2 ( \mathbf{x}_i, \mathbf{x}_{i+1} )$, $i \geq 1$, glued to $R_{01} + R_{02}$;
  \item the domain of $\overline{\phi}_0 \in \pi_2 ( \mathbf{x}_0, \mathbf{x}_1 )$ glued to $R_{02}$;
  \item the domain of $\overline{\phi}_0 \in \pi_2 ( \mathbf{x}_0, \mathbf{x}_1 )$ glued to $R_{02} + R_{13}$.
\end{itemize}

The second domain is rectangular, therefore we have $\# \mathcal{M} ( \mathbf{x}_0 \mathbf{bc}, \mathbf{x}_1 \mathbf{a_s d} ; \sigma_2 ) =1$. The third domain could possibly contribute to the term $\rho_1 \sigma_{23} \cdot \mathbf{x}_1 \mathbf{a_s d}$ in $\delta^1 ( \mathbf{x}_0 \mathbf{ad} )$, but the idempotent rule does not allow this term to exist. Thus, we only need to study the first domain.

\begin{lem}
For $\mathbf{x} \in \mathfrak{S}_K$, the moduli space $\mathcal{M} ( \mathbf{x}_i \mathbf{a_s d}, \mathbf{x}_{i+1} \mathbf{a_s d} ; (\sigma_1, \sigma_2) )$, $i \geq 1$, has modulo two count one.
\label{lem:dilation2}
\end{lem}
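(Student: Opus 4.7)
The plan is to mirror the argument of Lemma~\ref{lem:dilation1}, exploiting the evident symmetry between type~1 and type~2 domains. Specifically, I would identify the glued domain $D'$ as the homology class $\overline{\phi}_i \in \pi_2(\mathbf{x}_i, \mathbf{x}_{i+1})$ in $\Sigma_K$ glued along $\partial C$ to $R_{01} + R_{02}$ in $\Sigma_{\mathcal{L}} \backslash C$. The hole $\partial C$ cuts $D'$ into a bigon piece (coming from $\overline{\phi}_i$, which sits in the winding region of $\Sigma_K$) and a two-region piece (coming from $R_{01} + R_{02}$). On $\partial C \cap D'$ there are two Reeb chords, which I will label $\mu_1, \mu_2$ so that $\mu_j$ is adjacent to the region carrying the boundary Reeb chord $\sigma_j$ on $\partial_R \overline{\Sigma}$.

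Next, I would invoke the pairing theorem machinery of~\cite[Chapter~9]{LOT08} exactly as in Lemma~\ref{lem:dilation1}. On the knot-complement side, by~\cite[Lemma~11.46]{LOT08} (the horizontal analogue of the bigon statement used previously), the moduli space $\mathcal{M}(\mathbf{x}_i, \mathbf{x}_{i+1}; (\mu_1, \mu_2))$ consists of a single holomorphic curve $u$, and the evaluation-height difference $\mathrm{ev}_{\mu_1}(u) - \mathrm{ev}_{\mu_2}(u)$ is a fixed real number $t_0$. On the Hopf-link side, the relevant pair of curves on $R_{01}$ and $R_{02}$ through $\mathbf{a_s}$ is prescribed (the gluing is forced by the idempotent condition at $\mathbf{a_s}\mathbf{d}$). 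Forming the $T$-matched moduli space
\begin{displaymath}
\mathcal{MM}(T;\mathbf{x}_i,\mathbf{x}_{i+1};\mathbf{a_s},\mathbf{a_s}) := \widetilde{\mathcal{MM}}(T;\mathbf{x}_i,\mathbf{x}_{i+1};\mathbf{a_s},\mathbf{a_s})/\mathbb{R},
\end{displaymath}
the modulo~2 count stabilizes for $T$ large; letting $T\to\infty$ forces the height difference of the matched pair to infinity, so the count is unchanged and the pairing theorem identifies it with $\#\mathcal{M}(\mathbf{x}_i\mathbf{a_s}\mathbf{d}, \mathbf{x}_{i+1}\mathbf{a_s}\mathbf{d}; (\sigma_1, \sigma_2))$, yielding the claimed value of one.

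The main obstacle, as in the previous lemma, is ensuring that the matched gluing really produces a transverse, index~$1$ moduli space, and that no spurious boundary degenerations appear on either side. This is where the remark following Lemma~\ref{lem:dilation1} is useful: the glued domain $D'$ has the same annular structure (inner boundary made of $\boldsymbol{\alpha}$-arcs together with a $\boldsymbol{\beta}$-segment from the winding region, outer boundary purely $\boldsymbol{\alpha}$-arcs), and cutting along the $\boldsymbol{\beta}$-segment emanating from $\mathbf{a_s}$ produces a holomorphic involution that swaps the two boundary components. This involution argument provides an independent verification that the count is odd, bypassing the $T$-stretching analysis if desired. Either route should conclude the proof; I would present the pairing-theorem argument for uniformity with Lemma~\ref{lem:dilation1} and mention the involution as an alternative.
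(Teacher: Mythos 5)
Your main argument is essentially the paper's: decompose the glued domain $D'$ along $\partial C$ into the bigon carried by $\overline{\phi}_i$ on the $\Sigma_K$ side and the two rectangular pieces $R_{01}$, $R_{02}$ on the $\Sigma_{\mathcal{L}}$ side, then run the $T$-matched fibered-product / time-dilation argument of Lemma~\ref{lem:dilation1}. The paper is slightly more explicit on the Hopf-link side: it records the intermediate generator $\mathbf{bc}$ and the two rectangular moduli spaces $\mathcal{M}(\mathbf{a_s d}, \mathbf{bc}; \cdot)$ and $\mathcal{M}(\mathbf{bc}, \mathbf{a_s d}; \cdot)$, which you gloss as ``prescribed by the idempotent condition''; it is worth spelling this out, since unlike Lemma~\ref{lem:dilation1} (where the intermediate point is the single intersection $\mathbf{a}$ on $\beta_1$) here the matching passes through $\mathbf{bc}$, changing the intersection points on \emph{both} $\beta_1$ and $\beta_2$.

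That last distinction is also why your proposed involution shortcut deserves a caveat. The remark after Lemma~\ref{lem:dilation1} describes $D$ as an annulus whose outer boundary is purely $\boldsymbol{\alpha}$-curves, which works there because $R_{20}+R_{30}$ meets only $\beta_1$ (and the winding-region $\beta$). In the present case $R_{01}+R_{02}$ is bounded by arcs of both $\beta_1$ and $\beta_2$ (indeed the pairing passes through $\mathbf{bc}$), so the boundary decomposition into ``inner = $\boldsymbol{\alpha}$ plus one $\boldsymbol{\beta}$-segment, outer = purely $\boldsymbol{\alpha}$'' is not the same, and the paper's remark is stated only for Lemma~\ref{lem:dilation1}. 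The time-dilation argument stands on its own and matches the paper's proof, so the lemma is established; just do not lean on the involution as an independent verification without checking the annular/boundary structure in this case.
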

\begin{proof}
The proof of this lemma also follows the same trick that we have used in Lemma~\ref{lem:dilation1}. For a homology class $\overline{\phi}_i \in \pi_2 (\mathbf{x}_i, \mathbf{x}_{i+1})$, the domain of $\overline{\phi}_i$ glued to $R_{01} + R_{02}$ will be denoted by $D$.  Again, divide $D$ along $\partial C \cap D$ and we have two Reeb chords $\lambda_0$ and $\lambda_1$ on $\partial C \cap D$. See the illustration in Figure~\ref{fig:dilation2}. Then the domain $D$ is decomposed into
\begin{itemize}
  \item the domain of $\overline{\phi}$, whose only valid interpretation is a bigon with moduli space $\mathcal{M} ( \mathbf{x}_i, \mathbf{x}_{i+1} ; (\lambda_0, \lambda_1) )$
  \item the rectangular domain $R_{01}$ with moduli space $\mathcal{M} ( \mathbf{a_s d}, \mathbf{bc} ; \lambda_0 )$
  \item the rectangular domain $R_{02}$ with moduli space $\mathcal{M} ( \mathbf{bc}, \mathbf{a_s d} ; \lambda_1 )$.
\end{itemize} 
Again the same argument in Lemma~\ref{lem:dilation1} proves that the moduli space of the fibered product has a unique point.
\end{proof}

\begin{figure}
\begin{center}
\includegraphics[scale=0.7]{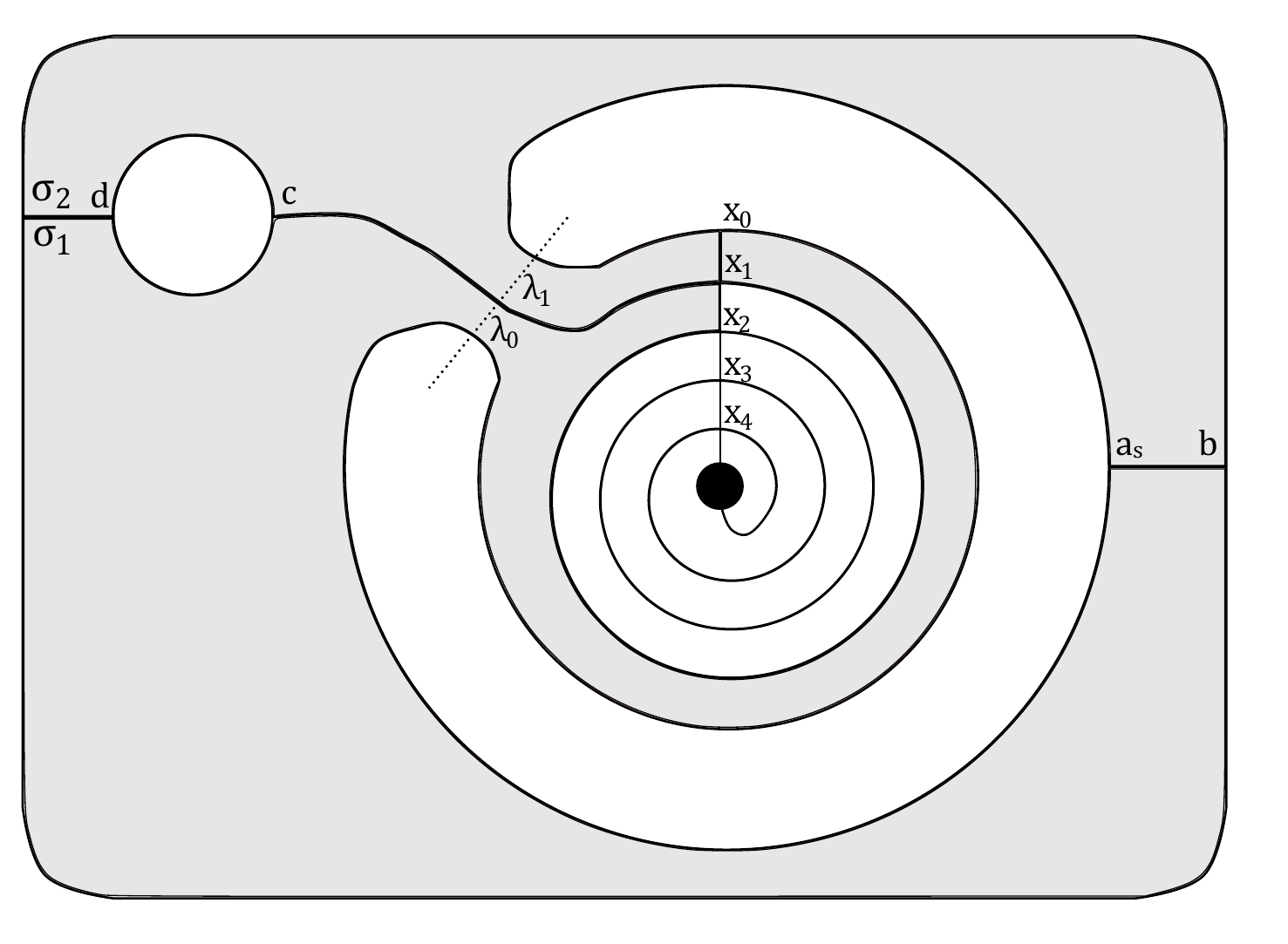}
\caption{The domain of $\overline{\phi}_i \in \pi_2 (\mathbf{x}_i, \mathbf{x}_{i+1} )$ glued to $R_{01} + R_{02}$. Again, the dashed line represents a part of $\partial C$. }
\label{fig:dilation2}
\end{center}
\end{figure}

{\bf Type 3 domain}. The domains for consideration are
\begin{itemize}
  \item the domain of $\psi_i \in \pi_2 (\mathbf{x}_i, \mathbf{x}_{i-1})$, $i \geq 2$, glued to $R_{20} + R_{30}$
  \item the domain of $\psi_1 \in \pi_2 (\mathbf{x}_1, \mathbf{x}_0)$, glued to $R_{20} + R_{30} + R_{01}$
  \item the domain of $\psi_1 \in \pi_2 (\mathbf{x}_1, \mathbf{x}_0)$, glued to $R_{20} + R_{30} + R_{01} + R_{02}$
\end{itemize}

We will study the first domains. First recall that $\# \mathcal{M} ( \mathbf{x}_i, \mathbf{x}_{i-1} ; (\lambda_2, \lambda_3) ) =1$, by~\cite[Lemma 11.48]{LOT08}. Then by the same argument in Lemma~\ref{lem:dilation1}, we can easily observe $\# \mathcal{M} ( \mathbf{x}_i \mathbf{a_s d}, \mathbf{x}_{i-1} \mathbf{a_s d} ; (\rho_2, \rho_3) ) =1$ for $i \geq 2$. \\

The second domain is dealt in the following Lemma.

\begin{lem}
Let $D$ be the domain obtained by gluing $\psi_2 \in \pi_2 (\mathbf{x}_2, \mathbf{x}_1)$ and $R_{20} + R_{30} + R_{01}$. Then the moduli space $\mathcal{M} (\mathbf{x}_1 \mathbf{a_s d}, \mathbf{x}_0 \mathbf{bc} ; (\rho_2, \rho_3, \sigma_1) )$ of the domains has modulo two count one.
\label{lem:dilation3}
\end{lem}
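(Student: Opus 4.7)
The plan is to extend the stretching-and-pairing argument used in Lemmas~\ref{lem:dilation1} and~\ref{lem:dilation2} by incorporating the additional rectangular region $R_{01}$. First I would cut the glued domain $D$ along $\partial C \cap D$, splitting it into a $\Sigma_K$-portion $D_K$ (which coincides with the type~3 homology class $\psi$) and a $\Sigma_{\mathcal{L}}$-portion whose components are $R_{20}$, $R_{30}$, and $R_{01}$. As in Lemma~\ref{lem:dilation1}, the cut introduces exactly two Reeb chords $\lambda_2$ and $\lambda_3$ on $\partial C$, coming from the $R_{20}$ and $R_{30}$ sides respectively; the region $R_{01}$ is located near the right puncture of $\Sigma$ and therefore contributes no chord on $\partial C$.

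Next I would analyze each piece separately. On the $\Sigma_K$-side, by~\cite[Lemma~11.48]{LOT08} the only valid holomorphic interpretation of $D_K$ is a bigon, giving a unique curve $u$ in the relevant moduli space with a fixed relative height $t_0 = \mathrm{ev}_{\lambda_2}(u) - \mathrm{ev}_{\lambda_3}(u)$. On the $\Sigma_{\mathcal{L}}$-side, $R_{20}+R_{30}$ decomposes as in Lemma~\ref{lem:dilation1} into rectangles contributing curves $v_1 \in \mathcal{M}(\mathbf{a_s},\mathbf{a};\lambda_2)$ and $v_2 \in \mathcal{M}(\mathbf{a},\mathbf{a_s};\lambda_3)$, while $R_{01}$ is an independent rectangle giving a unique curve $w$ with asymptotic chord $\sigma_1$ on the right boundary.

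Finally I would assemble the pieces via the pairing theorem of~\cite[Chapter~9]{LOT08}. The fibered product
\begin{displaymath}
\widetilde{\mathcal{MM}}(T; D_K; R_{20}+R_{30}) = \widetilde{\mathcal{M}}(D_K) \times_{T \cdot \mathrm{ev}_1 = \mathrm{ev}_2} \widetilde{\mathcal{M}}(R_{20}+R_{30})
\end{displaymath}
has modulo two count one once $T$ is chosen large enough to stretch the $\Sigma_K$-height difference $t_0$ to match the $\Sigma_{\mathcal{L}}$-side, exactly as in Lemma~\ref{lem:dilation1}. Since $R_{01}$ is disjoint from $\partial C$, it participates only as an independent rectangular factor of count one, and letting $T \to \infty$ yields the desired total count of one. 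The main obstacle is the bookkeeping: one must verify via~\cite[Definition~5.61]{LOT08} that the combined expected dimension of the glued moduli space is indeed one, and check that no new degenerations arise along the interior arcs where $R_{01}$ abuts $R_{20}+R_{30}$ or where the winding-region indices of the $\Sigma_K$-piece are relabelled under the gluing; once these are dispatched, the stretching argument from Lemma~\ref{lem:dilation1} transfers over essentially verbatim.
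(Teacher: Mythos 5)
Your proposal has a genuine gap: it misidentifies the $\Sigma_K$-side piece of the cut domain and, as a consequence, the combinatorics of the chords on $\partial C$.

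The crucial point is that this domain connects $\mathbf{x}_1 \mathbf{a_s d}$ to $\mathbf{x}_0 \mathbf{bc}$, so on the $\Sigma_{\mathcal{L}}$-side the generator changes from $\mathbf{a_s d}$ to $\mathbf{bc}$. The region $R_{01}$ is one of the six regions of $\Sigma_{\mathcal{L}} \setminus C$ listed in Section~\ref{sec:main}, all of which are adjacent to $\partial C$; it is therefore \emph{not} disjoint from $\partial C$ and it contributes a third Reeb chord there, call it $\lambda_0$. Correspondingly, the $\Sigma_K$-side piece is not the bigon of~\cite[Lemma~11.48]{LOT08} with asymptotics $(\lambda_2,\lambda_3)$. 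For $\psi_i$ with $i \geq 2$ the two-chord bigon picture is correct, but $\psi_1$ is different: its domain is $\Sigma_K \setminus \widetilde{D}$, where $\widetilde{D}$ is the rectangle adjacent to $\lambda_1$, so the domain has multiplicity one in the boundary regions $0$, $2$, and $3$. Its only valid interpretation is the moduli space $\mathcal{M}(\mathbf{x}_1,\mathbf{x}_0;(\lambda_2,\lambda_3,\lambda_0))$ with \emph{three} boundary asymptotics, analyzed via the parallel of~\cite[Lemma~11.49]{LOT08} (surface minus a rectangle), not Lemma~11.48. Only then does one glue the three rectangles $R_{20}$, $R_{30}$, $R_{01}$, each matched to one of $\lambda_2$, $\lambda_3$, $\lambda_0$, and apply the time-dilation pairing as in Lemma~\ref{lem:dilation1}.

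Because your argument never produces the chord $\lambda_0$ on the $\Sigma_K$-side, it has nothing to pair $R_{01}$ against, and the "independent rectangular factor" you propose simply does not appear in the matched moduli space. A fibered product over a two-chord matching will not produce curves asymptotic to $\mathbf{x}_0\mathbf{bc}$ at all (the idempotent on the right boundary must change, which requires passing through $R_{01}$, which requires the third chord). You would need to redo the $\Sigma_K$-side analysis for the domain $\Sigma_K \setminus \widetilde{D}$, establish that it has an odd count with three asymptotic chords, and only then run the pairing argument with all three rectangles.
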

\begin{proof}
Cutting $\Sigma$ along $\partial C$, we have two diagrams, $\Sigma_K$ and its complement. On $\Sigma_K$, we let $\widetilde{D}$ denote a rectangular region that connects $x_0$ and $x_1$ adjacent to the chord $\lambda_1$. Consider $\Sigma_K \backslash \widetilde{D}$. This domain has not been considered in the proof of \cite[Theorem 11.36]{LOT08}, since in their convention the domain contained the distinguished point $z$. However, by the parallel analysis in \cite[Lemma 11.49]{LOT08}, the only valid interpretation of this domain is $\mathcal{M} ( \mathbf{x}_1, \mathbf{x}_0 ; (\lambda_2, \lambda_3, \lambda_0 ) )$ and its modulo two count is one. Then we glue the domain $\Sigma_K \backslash \widetilde{D}$ to $R_{20} + R_{30} + R_{01}$. The domains $R_{20}, R_{30}$ and $R_{01}$ are all rectangular; therefore we have three moduli spaces
\begin{displaymath}
\mathcal{M} ( \mathbf{a_s d}, \mathbf{a d} ; R_{20} ), \quad \mathcal{M} ( \mathbf{a d}, \mathbf{a_s d} ; R_{30} ), \quad \mathrm{and} \ \mathcal{M} ( \mathbf{a_s d}, \mathbf{bc} ; R_{01} ).
\end{displaymath}
Each of these moduli spaces has a unique point, and by the same trick used in Lemma~\ref{lem:dilation1}, the claim is proved.
\end{proof}

The last domain is a periodic domain, and the domain does not have an appropriate interpretation of expected dimension one. \\

{\bf Type 4 domain}. We will consider the following domains.
\begin{itemize}
  \item the domain of $\overline{\psi}_i \in \pi_2 ( \mathbf{x}_{-i}, \mathbf{x}_{-i+1} )$, $i \geq 2$, glued to $R_{01} + R_{02}$
  \item the domain of $\overline{\psi}_1 \in \pi_2 ( \mathbf{x}_{-1}, \mathbf{x}_0 )$, glued to $R_{20} + R_{01} + R_{02}$
  \item the domain of $\overline{\psi}_1 \in \pi_2 ( \mathbf{x}_{-1}, \mathbf{x}_0 )$, glued to $+R_{13} + R_{20} + R_{01} + R_{02}$.
\end{itemize}
The first domain has a moduli space $\mathcal{M} ( \mathbf{x}_{-i} \mathbf{a_s d}, \mathbf{x}_{-i+1} \mathbf{a_s d} ; \sigma_1, \sigma_2 )$, and by the same consideration of Lemma~\ref{lem:dilation2}, its modulo two count is one. The second domain has a moduli space $\mathcal{M} ( \mathbf{x}_{-1} \mathbf{a_s d}, \mathbf{x}_0 \mathbf{ad} ; \rho_2, \sigma_1, \sigma_2)$, and it has modulo two count is one again by the same argument of Lemma~\ref{lem:dilation3}. The last domain does not count due to the idempotent restriction. \\

{\bf Type 5 domain}. For $\mathbf{x}, \mathbf{y} \in \mathfrak{S}_K$, let us suppose there exists a domain $B$ of the doubly pointed Heegaard diagram $\mathcal{H}_K$ from $\mathbf{x}$ to $\mathbf{y}$, such that $n_z (B) = l$ $n_w (B) = 0$. Then for a homology class $\varphi_i \in \pi_2 ( \mathbf{x}_i, \mathbf{y}_{i-l} )$, we consider the following domains.
\begin{itemize}
  \item the domain of $\varphi_i \in \pi_2 ( \mathbf{x}_i, \mathbf{y}_{i-l} )$, $i > l$
  \item the domain of $\varphi_l \in \pi_2 ( \mathbf{x}_l, \mathbf{y}_0 )$, glued to $R_{01}$
  \item the domain of $\varphi_l \in \pi_2 ( \mathbf{x}_l, \mathbf{y}_0 )$, glued to $R_{01} + R_{02}$ and the domain of $\overline{\phi}_0 \in \pi_2 (\mathbf{x}_0, \mathbf{x}_1 )$ 
  \item the domain of $\varphi_l \in \pi_2 ( \mathbf{x}_l, \mathbf{y}_0 )$, glued to $R_{01} + R_{02} + R_{13}$ and the domain of $\overline{\phi}_0 \in \pi_2 (\mathbf{x}_0, \mathbf{x}_1 )$
\end{itemize} 

The first domain is the exactly same domain considered in~\cite[Lemma 11.48]{LOT08}, so it contributes to the coefficient domain $D_{\emptyset} : \mathbf{x}_i \mathbf{a_s d} \rightarrow \mathbf{x}_{i-l} \mathbf{a_s d}$. The second domain is divided into the domain of $\varphi_l$ and $R_{01}$, and by the standard pairing argument of Lemma~\ref{lem:dilation1}, it results a moduli space $\mathcal{M} ( \mathbf{x}_1 \mathbf{a_s d}, \mathbf{y}_0 \mathbf{bc} ; \sigma_1)$. The third domain is a valid domain only when the length $l$ of the arrow equals one, otherwise it would connect invalid generators. We will study this domain in the following Lemma.

\begin{lem}
If $l=1$, then the moduli space $\mathcal{M} ( \mathbf{x}_0 \mathbf{bc}, \mathbf{y}_0 \mathbf{bc} ; \sigma_{12} )$ has modulo two count one. 
\end{lem}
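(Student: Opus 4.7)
The plan is to adapt the cut-and-paste pairing argument of Lemmas~\ref{lem:dilation1}--\ref{lem:dilation3}. The composite domain $D$ from $\mathbf{x}_0\mathbf{bc}$ to $\mathbf{y}_0\mathbf{bc}$ decomposes along $\partial C$ into a $\Sigma_K$-side piece, namely the concatenation of $\overline{\phi}_0 \in \pi_2(\mathbf{x}_0, \mathbf{x}_1)$ with $\varphi_1 \in \pi_2(\mathbf{x}_1, \mathbf{y}_0)$, together with a Hopf-link-side piece $R_{01} + R_{02}$. Label the two Reeb chords on $\partial C \cap D$ by $\lambda_0$ and $\lambda_1$ as in the proof of Lemma~\ref{lem:dilation2}, so that $\lambda_0$ lies on the arc adjacent to $R_{01}$ and $\lambda_1$ on the arc adjacent to $R_{02}$.

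First I would analyze the $\Sigma_K$-side. When $l=1$, the $\varphi_1$-piece contains a single winding-strip $\zeta_1$, and its concatenation with the rectangle $\overline{\phi}_0$ produces a domain in $\pi_2(\mathbf{x}_0, \mathbf{y}_0)$ whose only index-one interpretation is a bigon; a direct winding-region argument parallel to \cite[Lemma 11.48]{LOT08} shows that the corresponding moduli space $\mathcal{M}(\mathbf{x}_0, \mathbf{y}_0; (\lambda_1, \lambda_0))$ has modulo two count one. On the Hopf-link side, $R_{01}$ and $R_{02}$ are rectangles, so $\mathcal{M}(\mathbf{bc}, \mathbf{a_s d}; \lambda_0)$ and $\mathcal{M}(\mathbf{a_s d}, \mathbf{bc}; \lambda_1)$ each have a unique holomorphic representative.

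Then I would invoke the stretching argument of Lemma~\ref{lem:dilation1}: form the $T$-matched fibered product $\widetilde{\mathcal{MM}}(T; \mathbf{x}_0, \mathbf{y}_0; \mathbf{bc}, \mathbf{bc})$, observe that its modulo two count is independent of $T$ for $T$ large, and let $T \to \infty$. The height-difference $\mathrm{ev}_{\lambda_0} - \mathrm{ev}_{\lambda_1}$ on the $\Sigma_K$-side takes a fixed real value, whereas the $T$-rescaling on the Hopf-link-side drives its analogue to infinity, so the matching condition is automatically solved in the limit. Concatenating the chord sequence $(\sigma_1, \sigma_2)$ on the right boundary then yields $\sigma_{12}$, proving $\# \mathcal{M}(\mathbf{x}_0\mathbf{bc}, \mathbf{y}_0\mathbf{bc}; \sigma_{12}) = 1$.

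The main obstacle will be controlling the $\Sigma_K$-side count, since the concatenation $\overline{\phi}_0 \cup \varphi_1$ is not treated verbatim in \cite[Chapter 11]{LOT08}: one must rule out competing index-one interpretations (boundary degenerations, splittings with different partitions of the multiplicity at the puncture, or alternative chord orderings on $\partial C$) and confirm that the chord ordering is forced to be $(\lambda_1, \lambda_0)$ rather than $(\lambda_0, \lambda_1)$, the latter being dictated by compatibility with the gluing that produces the product $\sigma_1 \sigma_2 = \sigma_{12}$ on the right boundary. Once this is in hand, the remaining steps are exactly the pairing and stretching arguments already employed in the preceding lemmas.
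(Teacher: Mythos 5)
Your proposal takes a genuinely different route from the paper, and the route has a gap you yourself flag but do not close. The paper does \emph{not} stretch along $\partial C$ and pair; instead it dualizes the bimodule (reversing the boundary orientation, so the $\widehat{CFDD}$ count becomes a $\widehat{CFAA}$ count), splits the composite region into the two pieces $R_{02}+\overline{\phi}_0$ and $R_{01}+\varphi_1$, each of which produces an already-established $m_2$-action $m(\mathbf{x}_0\mathbf{bc}, \overline{\sigma}_2)=\mathbf{x}_1\mathbf{a_s d}$ and $m(\mathbf{x}_1\mathbf{a_s d}, \overline{\sigma}_3)=\mathbf{y}_0\mathbf{bc}$, and then reads off the desired count from the $\mathcal{A}_\infty$-relation $0 = m(m(\mathbf{x}_0\mathbf{bc},\overline{\sigma}_2),\overline{\sigma}_3)+m(\mathbf{x}_0\mathbf{bc},\overline{\sigma}_{23})$. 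The $\mathcal{A}_\infty$-relation is a structural theorem that encodes the necessary gluing and compactness information once and for all; appealing to it means no new moduli space needs to be analyzed.

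By contrast, the linchpin of your argument is the claim that the $\Sigma_K$-side moduli space for the concatenation $\overline{\phi}_0 * \varphi_1$ has modulo-two count one ``by a direct winding-region argument parallel to \cite[Lemma 11.48]{LOT08}.'' This is where the argument is genuinely incomplete. The domain $\overline{\phi}_0*\varphi_1$ is \emph{not} localized in the winding region: $\varphi_1$ contains the full $CFK^-$ domain from $\mathbf{x}$ to $\mathbf{y}$ (with $n_z=1$), whose geometry depends on the knot and is not treated in \cite[Chapter 11]{LOT08}, and the composite source emits two $\partial C$-chords rather than one. Ruling out boundary degenerations, alternative chord orderings, and multiplicities is precisely the hard part, and it is not resolved by citing Lemma 11.48. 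There is also a secondary issue with the chord bookkeeping: the Hopf-link piece carries $\mathbf{bc}$ at both ends, so in the broken limit the curve traverses $R_{02}$ (emitting $\sigma_2$) and then $R_{01}$ (emitting $\sigma_1$), giving the sequence $(\sigma_2,\sigma_1)$, which in the torus algebra does not naively multiply to $\sigma_{12}$; one must explain why the glued-up curve carries a \emph{single} chord $\sigma_{12}$ of the correct index. The paper's dualization step absorbs this concatenation into the algebra multiplication $\overline{\sigma}_2\overline{\sigma}_3=\overline{\sigma}_{23}$, whereas in your setup it would have to be handled by hand. Until both of these points are resolved, the proposal does not constitute a proof.
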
 
\begin{proof}
We dualize the module $M$ in order to take advantage of the $\mathcal{A}_{\infty}$-relation. In particular, the orientation of the boundary of $\Sigma$ is reversed. We decompose the domain into two: the domain of $\varphi_l$ glued to $R_{01}$, and the domain of $\overline{\phi}_0$ glued to $R_{02}$. Each domain corresponds to the $\mathcal{A}_{\infty}$-relation
\begin{displaymath}
m ( \mathbf{x}_0 \mathbf{bc} ,\overline{ \sigma }_2 ) = \mathbf{x}_1 \mathbf{a_s d}, \quad m( \mathbf{x}_1 \mathbf{a_s d} , \overline{ \sigma }_3 ) = \mathbf{y}_0 \mathbf{bc}.
\end{displaymath}
(Here the algebra elements with overlines emphasize reversed orientation.) The $\mathcal{A}_{\infty}$ relation of type-$AA$ module gives
\begin{eqnarray*}
0 & = & m^2 ( \mathbf{x}_0 \mathbf{bc} , \overline{\sigma}_2 , \overline{\sigma}_3 ) = m ( \mathbf{x}_1 \mathbf{a_s d}, \overline{\sigma}_3 ) + m ( \mathbf{x}_0 \mathbf{bc}, \overline{\sigma}_{23} ) \\
& = & \mathbf{y}_0 \mathbf{bc} + m ( \mathbf{x}_0 \mathbf{bc}, \overline{\sigma}_{23} ).
\end{eqnarray*}
Reversing $\overline{\sigma}_{23}$ will give $\sigma_{12}$, which proves the claim.
\end{proof}
However, the idempotent rule prohibits the above moduli space from contributing to $\delta^1$. \\

The fourth domain is studied in a similar manner.

\begin{lem}
If $l=1$, then $\mathcal{M} ( \mathbf{x}_0 \mathbf{ad} , \mathbf{y}_0 \mathbf{bc} ; \rho_1, \sigma_{123} )$ has modulo two count one.
\end{lem}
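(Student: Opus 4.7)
The plan is to mimic the argument of the preceding lemma, combining two known lower-order $m$-operations via the $\mathcal{A}_\infty$-relation of the type-$AA$ bimodule obtained by reversing the boundary orientation of $\Sigma$. First I will observe that the rectangular region $R_{13}$ sits entirely in the Hopf-link portion of $\Sigma$, disjoint from the winding region $\mathcal{W}$, and hence for any $\mathbf{x} \in \mathfrak{S}_K$ it contributes a Maslov-index-one rectangular domain with $\#\mathcal{M}(\mathbf{x}_0 \mathbf{ad}, \mathbf{x}_0 \mathbf{bc}; \rho_1, \sigma_3) = 1$. After reversing the boundary orientation this becomes the $\mathcal{A}_\infty$-action
\[
m(\mathbf{x}_0 \mathbf{ad}, \overline{\rho}_3, \overline{\sigma}_1) = \mathbf{x}_0 \mathbf{bc}.
\]

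Second, I will combine this with the preceding lemma, which supplies $m(\mathbf{x}_0 \mathbf{bc}, \overline{\sigma}_{23}) = \mathbf{y}_0 \mathbf{bc}$. The $\mathcal{A}_\infty$-relation for $m^2(\mathbf{x}_0 \mathbf{ad}, \overline{\rho}_3, \overline{\sigma}_1, \overline{\sigma}_{23}) = 0$, together with the torus-algebra identity $\overline{\sigma}_1 \cdot \overline{\sigma}_{23} = \overline{\sigma}_{123}$, contains the two obvious surviving terms
\[
m\bigl(m(\mathbf{x}_0 \mathbf{ad}, \overline{\rho}_3, \overline{\sigma}_1), \overline{\sigma}_{23}\bigr) = m(\mathbf{x}_0 \mathbf{bc}, \overline{\sigma}_{23}) = \mathbf{y}_0 \mathbf{bc}
\]
and $m(\mathbf{x}_0 \mathbf{ad}, \overline{\rho}_3, \overline{\sigma}_{123})$. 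Isolating the latter yields $m(\mathbf{x}_0 \mathbf{ad}, \overline{\rho}_3, \overline{\sigma}_{123}) = \mathbf{y}_0 \mathbf{bc}$, which reverses back to $\#\mathcal{M}(\mathbf{x}_0 \mathbf{ad}, \mathbf{y}_0 \mathbf{bc}; \rho_1, \sigma_{123}) \equiv 1 \pmod 2$, as claimed.

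The main obstacle is justifying that no other splitting in the $\mathcal{A}_\infty$-relation contributes. This reduces to ruling out the intermediate operations $m(\mathbf{x}_0 \mathbf{ad}, \overline{\rho}_3)$, $m(\mathbf{x}_0 \mathbf{ad}, \overline{\sigma}_1)$, and $m(\mathbf{x}_0 \mathbf{ad}, \overline{\sigma}_1, \overline{\sigma}_{23})$. Each would require a Maslov-index-one domain in $\mathcal{H}_{\mathcal{L}_K}(n)$ whose boundary chord pattern matches the indicated subsequence, and a case-by-case inspection against the classification of domains of types $1$--$6$ from Section~\ref{sec:prelim}, combined with the idempotent constraints on the targeted intermediate generator, shows that no such domain exists.
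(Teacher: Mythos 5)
Your proposal is correct and follows essentially the same strategy as the paper: dualize to a type-$AA$ bimodule, assemble an $\mathcal{A}_\infty$-relation from known lower-order operations, and reverse orientation to recover the claimed moduli-space count. The only organizational difference is that the paper rederives the relation from three elementary pieces --- $m(\mathbf{x}_0\mathbf{ad},\overline{\rho}_3,\overline{\sigma}_1)=\mathbf{x}_0\mathbf{bc}$, $m(\mathbf{x}_0\mathbf{bc},\overline{\sigma}_2)=\mathbf{x}_1\mathbf{a_s d}$, and $m(\mathbf{x}_1\mathbf{a_s d},\overline{\sigma}_3)=\mathbf{y}_0\mathbf{bc}$ --- whereas you combine the $R_{13}$ piece directly with the output $m(\mathbf{x}_0\mathbf{bc},\overline{\sigma}_{23})=\mathbf{y}_0\mathbf{bc}$ of the preceding lemma, which is a slightly more economical packaging of the same argument.
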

\begin{proof}
Again, we dualize the module. Then there are following correspondences between moduli spaces and $\mathcal{A}_{\infty}$-relations.
\begin{itemize}
  \item $\mathcal{M} ( \mathbf{x}_0 \mathbf{ad}, \mathbf{x}_0 \mathbf{bc} ; \rho_1, \sigma_3 )$, obtained by the obvious rectangular domain in $\Sigma_{\mathcal{L}} \backslash C$, corresponding to $m ( \mathbf{x}_0 \mathbf{ad} , \overline{\rho}_3, \overline{\sigma}_1 ) = \mathbf{x}_0 \mathbf{bc}$
  \item $\mathcal{M} ( \mathbf{x}_0 \mathbf{bc}, \mathbf{x}_1 \mathbf{a_s d} ; \sigma_2 )$, corresponding to $m ( \mathbf{x}_0 \mathbf{bc}, \overline{\sigma}_2 ) = \mathbf{x}_1 \mathbf{a_s d}$
  \item $\mathcal{M} ( \mathbf{x}_1 \mathbf{a_s d}, \mathbf{y}_0 \mathbf{bc} ; \sigma_1 )$, corresponding to $m ( \mathbf{x}_1 \mathbf{a_s d}, \overline{\sigma}_3 ) = \mathbf{y}_0 \mathbf{bc}$
\end{itemize}
Through the combination of the above $\mathcal{A}_{\infty}$-relation and by reversing the boundary orientation, we prove the claim.
\end{proof}

{\bf Type 6 domain}. Similarly, for $\mathbf{x}, \mathbf{y} \in \mathfrak{S}_K$ let us suppose there exists a domain $B$ of the doubly pointed Heegaard diagram $\mathcal{H}_K$ from $\mathbf{x}$ to $\mathbf{y}$ such that $n_z (B) = 0$ $n_w (B) = l$. Then we have homology classes $\overline{\varphi}_i \in \pi_2 (\mathbf{x}_{-i}, \mathbf{y}_{-i+l})$, $i>l$. Then these domains result the following domains $\Sigma$ by appropriate gluing.
\begin{itemize}
  \item the domain of $\overline{\varphi}_i \in \pi_2 ( \mathbf{x}_{-i}, \mathbf{y}_{-i+l} )$, $i > l$
  \item the domain of $\overline{\varphi}_l \in \pi_2 ( \mathbf{x}_l, \mathbf{y}_0 )$, glued to $R_{20}$
  \item the domain of $\overline{\varphi}_l \in \pi_2 ( \mathbf{x}_l, \mathbf{y}_0 )$, glued to $R_{20} + R_{30}$ and the domain of $\phi_0 \in \pi_2 (\mathbf{x}_0 , \mathbf{x}_{-1} )$
  \item the domain of $\overline{\varphi}_l \in \pi_2 ( \mathbf{x}_l, \mathbf{y}_0 )$, glued to $R_{20} + R_{30} + R_{13}$ and the domain of $\phi_0 \in \pi_2 (\mathbf{x}_0 , \mathbf{x}_{-1} )$
\end{itemize} 
The analysis of type 5 domains are similar to the type 5 domains, thus we only list the moduli spaces of modulo two count one instead of repeating the same argument again. 
\begin{itemize}
  \item $\mathcal{M} ( \mathbf{x}_{-1} \mathbf{a_s d} , \mathbf{y}_{-i+l} \mathbf{a_s d} )$, $i >l$
  \item $\mathcal{M} ( \mathbf{x}_l \mathbf{a_s d}, \mathbf{y}_0 \mathbf{ad} ; \rho_2 )$
  \item $\mathcal{M} ( \mathbf{x}_0 \mathbf{ad}, \mathbf{y}_0 \mathbf{bc} ; \rho_{123} \sigma_3 )$, if $l=1$
\end{itemize}

{\bf Other domains}. There are two other domains that are not included in the discussion above. The easier one is the domain $R_{13}$, which is rectangular and has an moduli space $\mathcal{M} ( \mathbf{x}_0 \mathbf{ad}, \mathbf{x}_0 \mathbf{bc} ; \rho_1, \sigma_3 )$ of modulo two count one. \\

Then we turn to the domain that contributes to the algebra element $\rho_{123} \sigma_{123}$. Let $D$ be a domain of $\Sigma$ which has multiplicity one on all regions but the domain contains the arc $\mathbf{z}$. Obviously the domain contributes to the differential from $\mathbf{x}_0 \mathbf{ad}$ to $\mathbf{x}_0 \mathbf{bc}$ for each $x \in \mathfrak{S}_K$. The domain $D$ has the following three interpretations.
\begin{itemize}
  \item $\mathcal{M} ( \mathbf{x}_0 \mathbf{ad}, \mathbf{x}_0 \mathbf{bc} ; \rho_{123}, \sigma_1, \sigma_2, \sigma_3 )$
  \item $\mathcal{M} ( \mathbf{x}_0 \mathbf{ad}, \mathbf{x}_0 \mathbf{bc} ; \rho_1, \rho_2, \rho_3, \sigma_{123} )$
  \item $\mathcal{M} ( \mathbf{x}_0 \mathbf{ad}, \mathbf{x}_0 \mathbf{bc} ; \rho_1, \rho_2, \rho_3, \sigma_1, \sigma_2, \sigma_3 )$
\end{itemize}
We claim that all of these moduli spaces have modulo two count one, and this proves the existence of the term $\rho_{123} \sigma_{123} \mathbf{x}_0 \mathbf{bc}$ in $\delta^1 \mathbf{x}_0 \mathbf{a_s d}$. \\

The moduli spaces of the first two domains can be dealt within the $\mathcal{A}_{\infty}$-relation by the dualizing technique. Explicitly, the combination of moduli spaces
\begin{displaymath}
\mathcal{M} ( \mathbf{x}_0 \mathbf{ad}, \mathbf{x}_{-1} \mathbf{a_s d} ; \overline{\rho}_1), \ \mathcal{M} ( \mathbf{x}_{-1} \mathbf{a_s d}, \mathbf{x}_0 \mathbf{ad} ; \overline{\rho}_2, \overline{\sigma}_3, \overline{\sigma}_2 ), \ \mathcal{M} ( \mathbf{x}_0 \mathbf{ad}, \mathbf{x}_0 \mathbf{bc} ; \overline{\rho}_3, \overline{\sigma}_1)
\end{displaymath}
(note that the first one is considered in type 1 domain, the second in type 4 domains, and the last one in the paragraph above) will result
\begin{displaymath}
\# \mathcal{M} ( \mathbf{x}_0 \mathbf{ad}, \mathbf{x}_0 \mathbf{bc} ; \rho_{123}, \sigma_1, \sigma_2, \sigma_3 ) =1
\end{displaymath}
modulo two. Similarly, the moduli space
\begin{displaymath}
\mathcal{M} ( \mathbf{x}_0 \mathbf{ad}, \mathbf{x}_0 \mathbf{bc} ; \overline{\rho}_3, \overline{\sigma}_1), \ \mathcal{M} ( \mathbf{x}_0 \mathbf{bc}, \mathbf{x}_1 \mathbf{a_s d} ; \overline{\sigma}_2 ), \ \mathcal{M} ( \mathbf{x}_1 \mathbf{a_s d}, \mathbf{x}_0 \mathbf{bc} ; \overline{\rho}_2, \overline{\rho}_1, \overline{\sigma}_3 )
\end{displaymath}
(the second moduli space is considered in type 2 domain, and the last moduli space in type 3 domain) will give us
\begin{displaymath}
\# \mathcal{M} ( \mathbf{x}_0 \mathbf{ad}, \mathbf{x}_0 \mathbf{bc} ; \rho_1, \rho_2, \rho_3, \sigma_{123} ) =1
\end{displaymath}
modulo two.\\

Now, we turn to the moduli space $\mathcal{M} ( \mathbf{x}_0 \mathbf{ad}, \mathbf{x}_0 \mathbf{bc} ; \rho_1, \rho_2, \rho_3, \sigma_1, \sigma_2, \sigma_3 )$. First, let us cut open the domain $D$ along the curve $C$. Then we get two components $\Sigma_K$ and $B$ of $D$. Recall that $\Sigma_K$ is the standard bordered Heegaard diagram of the knot $K$ complement with framing $-n$, thus $B$ can be considered as a complement of $\Sigma_K$ in $D$. Considering the domain $B$ as a domain in the diagram $\Sigma_{\mathcal{L}}$ (see Figure~\ref{fig:connectedsum}), we can again decompose $B$ into three smaller domains, say,
\begin{displaymath}
B_1 : = R_{13}, \quad B_2 : = R_{20} + R_{02}, \quad \textrm{and} \quad B_3 : = R_{30} + R_{01}.
\end{displaymath}
Each rectangular domain $B_i$ can be associated to the following moduli spaces;
\begin{eqnarray*}
\mathcal{M} ( \mathbf{ad}, \mathbf{bc} ; B_1 ) & = & \mathcal{M} ( \mathbf{ad}, \mathbf{bc} ; \rho_1, \sigma_3 ), \\
\mathcal{M} ( \mathbf{bc}, \mathbf{ad} ; B_2 ) & = & \mathcal{M} ( \mathbf{bc}, \mathbf{ad} ; \rho_2, \sigma_2, \lambda_{12} ), \\
\mathcal{M} ( \mathbf{ad}, \mathbf{bc} ; B_3 ) & = & \mathcal{M} ( \mathbf{ad}, \mathbf{bc} ; \rho_3, \sigma_1, \lambda_{30} ).
\end{eqnarray*}
Then observe the domain $B_1 + B_2$ has a moduli space
\begin{displaymath}
\mathcal{M} ( \mathbf{bc}, \mathbf{bc} ; B_1 + B_2) = \mathcal{M} ( \mathbf{bc}, \mathbf{bc} ; \rho_1, \rho_2, \sigma_2, \sigma_3, \lambda_{12} ).
\end{displaymath} 
This moduli space can be interpreted as an annulus, whose outer boundary consists of $\boldsymbol{\alpha}$ and $\boldsymbol{\beta}$ curves and inner boundary consists of $\boldsymbol{\alpha}$ curve only. By making a cut along $\widetilde{\beta}_2$ from $c$, it follows that the moduli space is transversely cut out and has an odd number of points. \\

On the other hand, the domain $\Sigma_K$ cannot have any corner in its interior, and it can be regarded as a boundary degeneration which was introduced in~\cite[Chapter 11]{LOT08}. By the \emph{tautological correspondence}, for a sequence $\overrightarrow{\lambda} = ( \lambda_{12}, \lambda_{30} )$ we may consider a $J$-holomorphic map
\begin{displaymath}
\phi : \mathbb{H} \backslash \{ t_1, t_2 \} \rightarrow \mathrm{Sym}^{g-1} (\Sigma_K).
\end{displaymath}
where $\mathbb{H}$ be the upper half-plane, and $\phi$ is asymptotic to $\mathbf{x}$ at $\infty$ and to a chord $\lambda_{12}$ at $t_1$, $\lambda_{30}$ at $t_2$. Then by~\cite[Proposition 11.34]{LOT08}, the moduli space $\mathcal{M}^{ [ \Sigma_K ] } ( \mathbf{x} ; \overrightarrow{\lambda} )$ that contains $\phi$ is transversely cut out and have an odd number of points. Abusing the notation, for a holomorphic curve (in cylindrical setting) $\phi \in \mathcal{M}^{ [ \Sigma_K ] } ( \mathbf{x} ; \overrightarrow{\lambda} )$, the height difference between two chords $\mathrm{ev}_{\lambda_{12},\lambda_{30}}$ is a positive real number $t_0$. Choose representatives $v_1 \in \mathcal{M} ( \mathbf{ad}, \mathbf{bc} ; \rho_3, \sigma_1, \lambda_{30} )$ and $v_2 \in \mathcal{M} ( \mathbf{bc}, \mathbf{bc} ; \rho_1, \rho_2, \sigma_2, \sigma_3, \lambda_{12} )$, and move these curves so that the difference of the $\mathbb{R}$-coordinates of $v_1$ and $v_2$ is $t_0$. By the standard pairing theorem argument with the time dilation, we can conclude that $\mathcal{M} ( \mathbf{x}_0 \mathbf{ad}, \mathbf{x}_0 \mathbf{bc} ; \rho_1, \rho_2, \rho_3, \sigma_1, \sigma_2, \sigma_3 )$ also has modulo two count one. \\

We close this section by summarizing the discussion so far, in terms of the simplified bases. For aesthetic reasons we write $\mathbf{x}_0 := \mathbf{x}_0 \mathbf{ad}$, $\mathbf{x}_i := \mathbf{x}_i \mathbf{a_s d}$ for $i \neq 0$, and $\mathbf{x}_{\infty} : = \mathbf{x}_0 \mathbf{bc}$. The length of the unstable chain is deduced completely analogous to the~\cite[Theorem 11.26]{LOT08}.

\begin{prop}
Let $CFK^-(K)$ be a model for a reduced chain complex for a knot $K \subset S^3$. Then for sufficiently large interger $n$, the type-$DD$ module $M  = \widehat{CFDD} ( \mathcal{H}_{\mathcal{L}_K} (n) )$ can be derived from $CFK^-(K)$ by the following procedure. \\
For each $\mathbf{x} \in CFK^-(K)$, we have the following elements:
\begin{itemize}
  \item $\mathbf{x}_0 \in \imath_1 \jmath_1 M$ and $\mathbf{x}_{\infty} \in \imath_2 \jmath_2 M$
  \item $\mathbf{x}_i \in \imath_2 \jmath_1 M$, $i = -n/2, \cdots, -1, 1, \cdots, n/2$.
  \item The differential between these elements is
  \begin{displaymath}
  \xymatrix@C=0.75cm{
  & \cdots \ar@/^/[r]^{\sigma_{12}} & \mathbf{x}_{-2} \ar@/^/[r]^{\sigma_{12}} \ar@/^/[l]^{\rho_{23}} & \mathbf{x}_{-1} \ar@/^/[r]^{\rho_2 \sigma_{12}} \ar@/^/[l]^{\rho_{23}} & \mathbf{x}_0 \ar[rr]^{\rho_1 \sigma_3 + \rho_{123} \sigma_{123} } \ar@/^/[l]^{\rho_3} & & \mathbf{x}_{\infty} \ar@/^/[r]^{\sigma_2} & \mathbf{x}_1 \ar@/^/[r]^{\sigma_{12} } \ar@/^/[l]^{\rho_{23} \sigma_1} & \mathbf{x}_2 \ar@/^/[r]^{\sigma_{12}} \ar@/^/[l]^{\rho_{23}} & \cdots \ar@/^/[l]^{\rho_{23}}
  }
  \end{displaymath}
\end{itemize}
Let $\{ \mathbf{x}^k \}$ be a vertically simplified basis with $\mathbf{x}^0$ being the distinguished element. For the vertical arrow of length $l$ from $\mathbf{x}^j$ to $\mathbf{x}^{j+1}$, the differential between the associated elements is
\begin{displaymath}
\xymatrix{
\mathbf{x}^j_0 \ar[rr]^{\rho_1 \sigma_3 + \rho_{123} \sigma_{123}} & &  \mathbf{x}^j_{\infty} \ar@/^/[r]^{\sigma_2} & \mathbf{x}^j_1 \ar@/^/[r]^{\sigma_{12} } \ar@/^/[l]^{\rho_{23} \sigma_1} & \cdots \ar@/^/[r]^{\sigma_{12} } \ar@/^/[l]^{\rho_{23}} & \mathbf{x}^j_l \ar@/^/[r]^{\sigma_{12} } \ar@/^/[l]^{\rho_{23}} \ar[d]^{\sigma_1} & \mathbf{x}^j_{l+1} \ar@/^/[r]^{\sigma_{12} } \ar@/^/[l]^{\rho_{23}} \ar[d]^1 & \cdots \ar@/^/[l]^{\rho_{23}} \\
& & & \mathbf{x}^{j+1}_0 \ar[rr]^{\rho_1 \sigma_3 + \rho_{123} \sigma_{123}} & & \mathbf{x}^{j+1}_{\infty} \ar@/^/[r]^{\sigma_2} & \mathbf{x}^{j+1}_1 \ar@/^/[r]^{\sigma_{12} } \ar@/^/[l]^{\rho_{23} \sigma_1} & \cdots. \ar@/^/[l]^{\rho_{23}}
}
\end{displaymath}
In particular, if $l=1$, then there exists an additional differential $\xymatrix{ \mathbf{x}^j_0 \ar[r]^{\rho_1 \sigma_{123}} & \mathbf{x}^{j+1}_{\infty} }$. \\
On the other hand, let $\{ \mathbf{y}^k \}$ be a horizontally simplified basis with $\mathbf{y}^0$ being the distinguished element. For the horizontal arrow of length $l$ from $\mathbf{y}^j$ to $\mathbf{y}^{j+1}$, the differential between the associated elements is
\begin{displaymath}
\xymatrix{
\mathbf{y}^j_{\infty} & &  \mathbf{y}^j_0 \ar[ll]_{\rho_1 \sigma_3 + \rho_{123} \sigma_{123}} \ar@/^/[r]^{\rho_3} & \mathbf{y}^j_{-1} \ar@/^/[r]^{\rho_{23} } \ar@/^/[l]^{\rho_2 \sigma_{12} } & \cdots \ar@/^/[r]^{\rho_{23} } \ar@/^/[l]^{\sigma_{12}} & \mathbf{y}^j_{-l} \ar@/^/[r]^{\rho_{23} } \ar@/^/[l]^{\sigma_{12}} \ar[d]^{\rho_2} & \mathbf{y}^j_{-l-1} \ar@/^/[r]^{\rho_{23} } \ar@/^/[l]^{\sigma_{12}} \ar[d]^1 & \cdots \ar@/^/[l]^{\sigma_{12}} \\
& & & \mathbf{y}^{j+1}_{\infty} & & \mathbf{y}^{j+1}_0 \ar[ll]^{\rho_1 \sigma_3 + \rho_{123} \sigma_{123}} \ar@/^/[r]^{\rho_3} & \mathbf{y}^{j+1}_{-1} \ar@/^/[r]^{\rho_{23} } \ar@/^/[l]^{\rho_2 \sigma_{12} } & \cdots. \ar@/^/[l]^{\sigma_{12}}
}
\end{displaymath}
In particular, if $l=1$, then there exists an additional differential $\xymatrix{ \mathbf{y}^j_0 \ar[r]^{\rho_{123} \sigma_3 } & \mathbf{y}^{j+1}_{\infty} }$. \\
Lastly, the \emph{unstable chain} between the two distinguished elements is as follows.
\begin{displaymath}
\xymatrix{
\mathbf{x}^0_{\infty} & & \mathbf{x}^0_0 \ar[ll]_{\rho_1 \sigma_3 + \rho_{123} \sigma_{123} } \ar@/^/[r]^{\rho_3} & \gamma_1 \ar@/^/[l]^{\rho_2 \sigma_{12} } \ar@/^/[r]^{\rho_{23}} & \cdots \ar@/^/[l]^{ \sigma_{12} } \ar@/^/[r]^{\rho_{23}} & \gamma_m \ar@/^/[l]^{ \sigma_{12} } \ar@/^/[r]^{\rho_{23} \sigma_1 } & \mathbf{y}^0_{\infty} \ar@/^/[l]^{\sigma_2} & & \mathbf{y}^0_0, \ar[ll]_{\rho_1 \sigma_3 + \rho_{123} \sigma_{123} }
}
\end{displaymath}
where $\gamma_i \in \imath_2 \jmath_1 M$, and $m = n + 2 \tau(K)$.
\label{prop:main}
\end{prop}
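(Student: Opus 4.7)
The plan is to assemble the preceding moduli space computations into the claimed $\delta^1$ on $M$, organized first around a single $\mathbf{x} \in \mathfrak{S}_K$ and then augmented whenever $\mathbf{x}$ participates in a vertical or horizontal arrow of the simplified basis. First I would fix the generators: for sufficiently large $n$, each $\mathbf{x} \in \mathfrak{S}_K$ contributes $\mathbf{x}_0 \mathbf{ad} \in \imath_1 \jmath_1 M$, $\mathbf{x}_0 \mathbf{bc} \in \imath_2 \jmath_2 M$, and $\mathbf{x}_k \mathbf{a_s d} \in \imath_2 \jmath_1 M$ for $k = \pm 1, \ldots, \pm n/2$, while the function $S$ from Section~\ref{sec:prelim} guarantees that exterior generators lie far from these in grading and can be disregarded. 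The idempotent factorization of $\mathcal{A}_L \otimes \mathcal{A}_R$ then dictates which algebra elements can possibly label any arrow between two such generators, immediately killing the borderline moduli spaces flagged in the type~2, 3, 4 and type~5 lemmas whose outputs carried incompatible idempotents.

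Second, I would read off the first displayed diagram of the proposition directly from the type~1--4 and ``other'' domain analyses. Type~1 domains glued with $R_{20}+R_{30}$ and $R_{30}$ give the leftward $\rho_{23}$- and $\rho_3$-arrows; type~2 domains glued with $R_{01}+R_{02}$ and $R_{02}$ give the rightward $\sigma_{12}$- and $\sigma_2$-arrows from $\mathbf{x}_\infty$; type~3 domains combined with $R_{01}$ yield the rightmost $\rho_{23}$ and the $\rho_{23}\sigma_1$-arrow $\mathbf{x}_1 \to \mathbf{x}_\infty$; type~4 domains combined with $R_{20}$ yield the leftmost $\sigma_{12}$ and the $\rho_2 \sigma_{12}$-arrow $\mathbf{x}_{-1} \to \mathbf{x}_0$; and the rectangular $R_{13}$ together with the unit-multiplicity periodic domain containing $\mathbf{z}$ contributes the $\rho_1 \sigma_3$ and $\rho_{123} \sigma_{123}$ terms between $\mathbf{x}_0$ and $\mathbf{x}_\infty$ respectively.

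Third, I would handle the vertical-arrow case. For a vertical arrow $\mathbf{x}^j \to \mathbf{x}^{j+1}$ of length $l$, the type~5 analysis produces differentials $\mathbf{x}^j_i \mathbf{a_s d} \to \mathbf{x}^{j+1}_{i-l} \mathbf{a_s d}$ with trivial algebra label for $i > l$ and with label $\sigma_1$ for $i = l$, together with the extra $\rho_1 \sigma_{123}$-arrow $\mathbf{x}^j_0 \to \mathbf{x}^{j+1}_\infty$ when $l=1$ (the latter established by dualizing and composing three $\mathcal{A}_\infty$-relations). Splicing these new arrows into two copies of the basic diagram at $\mathbf{x}^j$ and $\mathbf{x}^{j+1}$ reproduces the second displayed diagram exactly. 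The horizontal-arrow case is verbatim the same argument using type~6 domains and produces the third diagram, with an additional $\rho_{123} \sigma_3$-arrow when $l=1$.

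Finally, for the unstable chain, I would follow the strategy of \cite[Theorem~11.36]{LOT08}: the length $m = n + 2\tau(K)$ is determined by the number of intersection points of $\alpha^a_1 \cap \beta_g$ lying between $\mathbf{x}^0_0$ and $\mathbf{y}^0_0$ in the winding region, and this count is preserved by the boundary connect sum defining $\mathcal{H}_{\mathcal{L}_K}(n)$. The arrows on the chain come from the same type~1, type~3 and type~4 computations applied to the intermediate generators $\gamma_i$, with the special $\rho_3, \rho_2\sigma_{12}, \sigma_{12}, \rho_{23}\sigma_1, \sigma_2$ labels appearing exactly at the endpoints. The main obstacle is bookkeeping rather than analysis: one has to verify that the enumerated list of arrows really exhausts all index-one holomorphic representatives and that no spurious periodic or high-multiplicity domains have been missed. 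This is forced by the grading constraint $\mathrm{gr}(\partial \mathbf{x}) = \lambda^{-1} \mathrm{gr}(\mathbf{x})$ from Section~\ref{sec:prelim}, and a direct verification of $(\delta^1)^2 = 0$ on each basic block provides the final consistency check.
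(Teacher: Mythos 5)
Your proposal matches the paper's approach: Proposition~\ref{prop:main} has no separate proof in the paper; the entirety of Section~\ref{sec:main}—the lemmas on domains of types 1--6 together with the ``other domains'' analysis—constitutes the proof, and the proposition is stated as a consolidation of those computations, exactly as you describe. One small correction in terminology: you call the domain producing the $\rho_{123}\sigma_{123}$ arrow a ``unit-multiplicity periodic domain containing $\mathbf{z}$,'' but it is neither periodic (it connects the distinct generators $\mathbf{x}_0\mathbf{ad}$ and $\mathbf{x}_0\mathbf{bc}$) nor does it contain $\mathbf{z}$ (its multiplicity is zero on the region containing the arc $\mathbf{z}$); and the $S$-function bound confines exterior generators to a \emph{bounded} grading range rather than pushing them far away, though the conclusion that they do not affect the stated structure for large $n$ is the same. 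Neither slip changes the substance: the assembly you describe is what the paper does.
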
 

\bigskip

\begin{proof}[Proof of Theorem~\ref{thm:main}]
It remains to find the homotopy equivalent model without the algebra element 1 of chains introduced in Proposition~\ref{prop:main}. The undesirable cancelling pairs can easily be removed by the tricks used in the proof of~\cite[Theorem 11.26]{LOT08}. Thus, the chain 
\begin{displaymath}
\xymatrix{
\mathbf{x}^j_0 \ar[rr]^{\rho_1 \sigma_3 + \rho_{123} \sigma_{123}} & &  \mathbf{x}^j_{\infty} \ar@/^/[r]^{\sigma_2} & \mathbf{x}^j_1 \ar@/^/[r]^{\sigma_{12} } \ar@/^/[l]^{\rho_{23} \sigma_1} & \cdots \ar@/^/[r]^{\sigma_{12} } \ar@/^/[l]^{\rho_{23}} & \mathbf{x}^j_l \ar@/^/[r]^{\sigma_{12} } \ar@/^/[l]^{\rho_{23}} \ar[d]^{\sigma_1} & \mathbf{x}^j_{l+1} \ar@/^/[r]^{\sigma_{12} } \ar@/^/[l]^{\rho_{23}} \ar[d]^1 & \cdots \ar@/^/[l]^{\rho_{23}} \\
& & & \mathbf{x}^{j+1}_0 \ar[rr]^{\rho_1 \sigma_3 + \rho_{123} \sigma_{123}} & & \mathbf{x}^{j+1}_{\infty} \ar@/^/[r]^{\sigma_2} & \mathbf{x}^{j+1}_1 \ar@/^/[r]^{\sigma_{12} } \ar@/^/[l]^{\rho_{23} \sigma_1} & \cdots. \ar@/^/[l]^{\rho_{23}}
}
\end{displaymath}
is homotopy equivalent to
\begin{displaymath}
\xymatrix{
\mathbf{x}^j_0 \ar[rr]^{\rho_1 \sigma_3 + \rho_{123} \sigma_{123}} & &  \mathbf{x}^j_{\infty} \ar@/^/[r]^{\sigma_2} & \mathbf{x}^j_1 \ar@/^/[r]^{\sigma_{12} } \ar@/^/[l]^{\rho_{23} \sigma_1} & \cdots \ar@/^/[r]^{\sigma_{12} } \ar@/^/[l]^{\rho_{23}} & \mathbf{x}^j_l \ar@/^/[r]^{\sigma_1 } \ar@/^/[l]^{\rho_{23}} & \mathbf{x}^{j+1}_{\infty}  \ar@/^/[l]^{\rho_{23} \sigma_2} & & \mathbf{x}^{j+1}_0, \ar[ll]^{\rho_1 \sigma_3 + \rho_{123} \sigma_{123}}
}
\end{displaymath}
and the chain
\begin{displaymath}
\xymatrix{
\mathbf{y}^j_{\infty} & &  \mathbf{y}^j_0 \ar[ll]_{\rho_1 \sigma_3 + \rho_{123} \sigma_{123}} \ar@/^/[r]^{\rho_3} & \mathbf{y}^j_{-1} \ar@/^/[r]^{\rho_{23} } \ar@/^/[l]^{\rho_2 \sigma_{12} } & \cdots \ar@/^/[r]^{\rho_{23} } \ar@/^/[l]^{\sigma_{12}} & \mathbf{y}^j_{-l} \ar@/^/[r]^{\rho_{23} } \ar@/^/[l]^{\sigma_{12}} \ar[d]^{\rho_2} & \mathbf{y}^j_{-l-1} \ar@/^/[r]^{\rho_{23} } \ar@/^/[l]^{\sigma_{12}} \ar[d]^1 & \cdots \ar@/^/[l]^{\sigma_{12}} \\
& & & \mathbf{y}^{j+1}_{\infty} & & \mathbf{y}^{j+1}_0 \ar[ll]^{\rho_1 \sigma_3 + \rho_{123} \sigma_{123}} \ar@/^/[r]^{\rho_3} & \mathbf{y}^{j+1}_{-1} \ar@/^/[r]^{\rho_{23} } \ar@/^/[l]^{\rho_2 \sigma_{12} } & \cdots. \ar@/^/[l]^{\sigma_{12}}
}
\end{displaymath}
is homotopy equivalent to
\begin{displaymath}
\xymatrix{
\mathbf{y}^j_{\infty} & &  \mathbf{y}^j_0 \ar[ll]_{\rho_1 \sigma_3 + \rho_{123} \sigma_{123}} \ar@/^/[r]^{\rho_3} & \mathbf{y}^j_{-1} \ar@/^/[r]^{\rho_{23} } \ar@/^/[l]^{\rho_2 \sigma_{12} } & \cdots \ar@/^/[r]^{\rho_{23} } \ar@/^/[l]^{\sigma_{12}} & \mathbf{y}^j_{-l} \ar@/^/[r]^{\rho_2 } \ar@/^/[l]^{\sigma_{12}} & \mathbf{y}^{j+1}_0 \ar@/^/[l]^{\rho_3 \sigma_{12}} \ar[rr]^{\rho_1 \sigma_3 + \rho_{123} \sigma_{123}} & & \mathbf{y}^{j+1}_{\infty}.
}
\end{displaymath}
These reductions both hold even if the length of the horizontal or vertical arrow $l$ equals one, again by the similar trick. For example, if a horizontal arrow has length one, replace $\mathbf{y}^j_{-1}$ by $\widetilde{\mathbf{y}}^j_{-1} := \mathbf{y}^j_{-1} + \rho_{23} \cdot \mathbf{y}^j_{\infty}$. The statement on the unstable chain carries over from Proposition~\ref{prop:main}, thus proving the claim. 
\end{proof}

\begin{figure}
\begin{center}
\begin{displaymath}
\xymatrix{
\mathbf{x} & \mathbf{y} \ar[l] \ar[d] & & & \vdots \ar@/_1pc/[d]_{\sigma_{12}} & \vdots \ar@/^1pc/[d]^{\sigma_{12}} \ar[dl]|1 & & & & \\
& \mathbf{z} & & & \mathbf{x}_{-2} \ar[u]|{\rho_{23}} \ar@/_1pc/[d]_{\sigma_{12}} & \mathbf{y}_{-2} \ar[u]|{\rho_{23}} \ar@/^1pc/[d]^{\sigma_{12}} \ar[dl]|1  & & & & \\
& & & & \mathbf{x}_{-1} \ar[u]|{\rho_{23}} \ar@/_1pc/[d]_{\rho_2 \sigma_{12}} & \mathbf{y}_{-1} \ar[u]|{\rho_{23}} \ar@/^1pc/[d]^{\rho_2 \sigma_{12}} \ar[dl]|{\rho_2}  & & & & \\
& & & & \mathbf{x}_0 \ar[u]|{\rho_3} \ar[dl]_{\rho_1 \sigma_3 + \rho_{123} \sigma_{123} } & \mathbf{y}_0 \ar[u]|{\rho_3} \ar[dr]^{\rho_1 \sigma_3 + \rho_{123} \sigma_{123} } \ar@/^1pc/[dll]^{\rho_{123} \sigma_3} \ar@/_1pc/[ddddr]_{\rho_1 \sigma_{123}} & & & & \\
& & & \mathbf{x}_{\infty} \ar[dl]^{\sigma_2} & & & \mathbf{y}_{\infty} \ar[dr]_{\sigma_2} & & & \\
\cdots \ar@/^1pc/[r]^{\rho_{23}} & \mathbf{x}_2 \ar[l]^{\sigma_{12}} \ar@/^1pc/[r]^{\rho_{23}} & \mathbf{x}_1 \ar[l]^{\sigma_{12}} \ar@/^1pc/[ur]^{\rho_{23} \sigma_1}& & & & & \mathbf{y}_1 \ar[r]_{\sigma_{12}} \ar@/_1pc/[ul]_{\rho_{23} \sigma_1} \ar@/_1pc/[ddl]|{\sigma_1} & \mathbf{y}_2 \ar[r]_{\sigma_{12}} \ar@/_1pc/[l]_{\rho_{23}} \ar[dl]|1 & \cdots \ar@/_1pc/[l]_{\rho_{23}} \ar[dl]|1 \\
 & & & & & & & \mathbf{z}_1 \ar[r]^{\sigma_{12}} \ar@/^1pc/[dl]^{\rho_{23} \sigma_1} & \mathbf{z}_2 \ar[r]^{\sigma_{12}} \ar@/^1pc/[l]^{\rho_{23}} & \cdots \ar@/^1pc/[l]^{\rho_{23}} \\
 & & & & & & \mathbf{z}_{\infty} \ar[ur]^{\sigma_2} & & & \\
 & & & & & \mathbf{z}_0 \ar[ur]_{\rho_1 \sigma_3 + \rho_{123} \sigma_{123} } \ar[d]^{\rho_3} & & & & \\
 & & & & & \mathbf{z}_{-1} \ar[d]^{\rho_{23}} \ar@/^1pc/[u]^{\rho_2 \sigma_{12}} & & & & \\
 & & & & & \mathbf{z}_{-2} \ar[d]^{\rho_{23}} \ar@/^1pc/[u]^{\sigma_{12}} & & & & \\
 & & & & & \vdots \ar@/^1pc/[u]^{\sigma_{12}} & & & &
}
\end{displaymath}
\caption{The diagram describes the type-$DD$ module structure of $\widehat{CFDD} ( \mathcal{H}_{\mathcal{L}_K} (n) )$ obtained directly from Proposition~\ref{prop:main}, where $K$ is the right-handed trefoil knot. The standard model of $CFK^-(K)$ that is used to derive this module is drawn on the top left corner. The elements of the unstable chain are $\mathbf{z}_i$, $i<0$, and $\mathbf{x}_j$, $j>0$. }
\label{fig:trefoil_inv}
\end{center}
\end{figure}
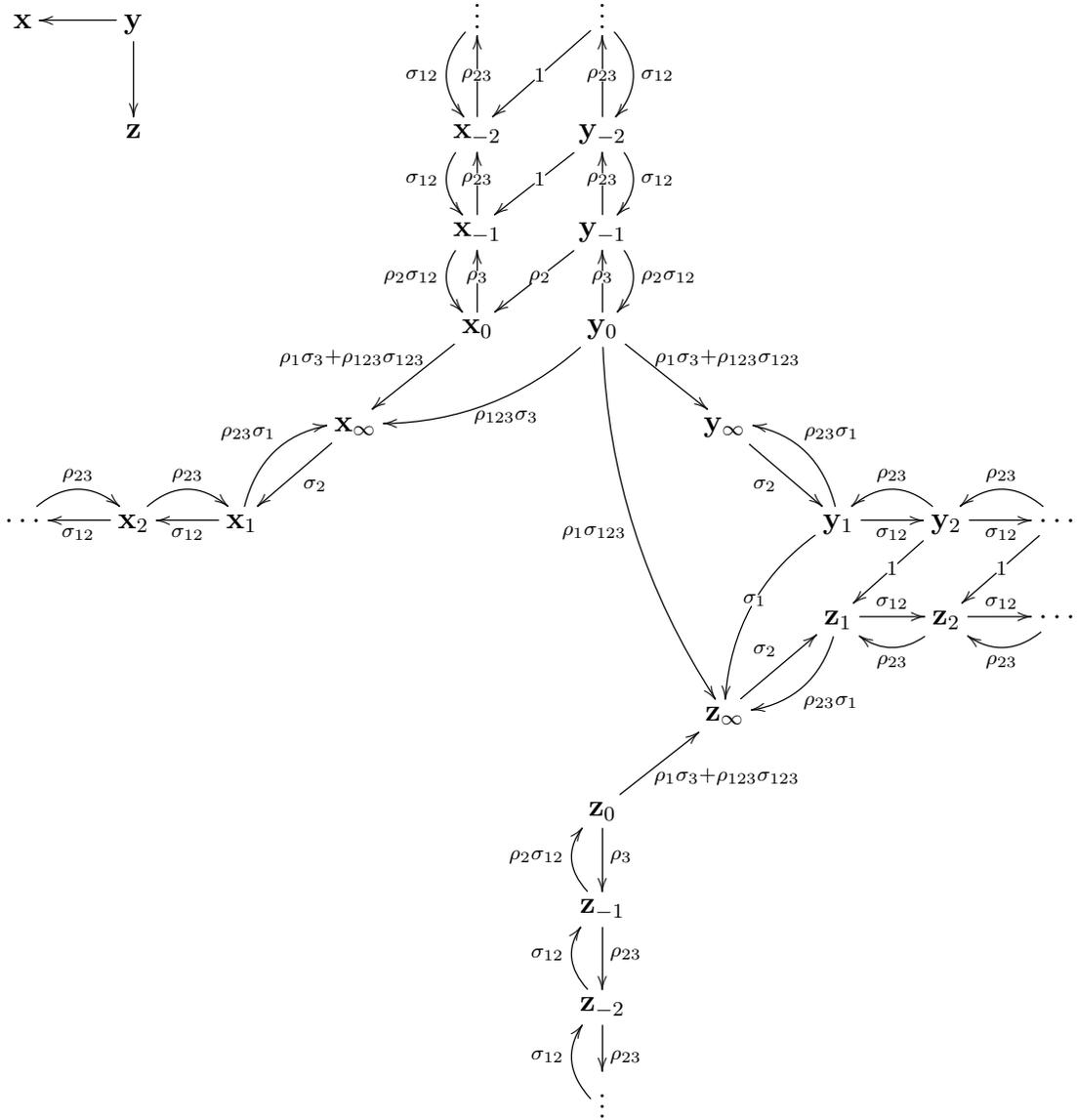

\begin{figure}
\begin{displaymath}
\xymatrix{
 & x_0 \ar[dl]_{\rho_1 \sigma_3 + \rho_{123} \sigma_{123}} \ar@/^1pc/[r]^{\rho_3 \sigma_{12}} & y_{-1} \ar@/^1pc/[r]^{\rho_2 \sigma_{12}} \ar[l]^{\rho_2} & y_0 \ar[l]^{\rho_3} \ar[dr]^{\rho_1 \sigma_3 + \rho_{123} \sigma_{123}} & \\
x_{\infty} \ar@/_1pc/[d]_{\sigma_2} & & & & y_{\infty} \ar[d]_{\sigma_2} \\
z_{-m} \ar@/_1pc/[d]_{\sigma_{12}} \ar[u]_{\rho_{23} \sigma_1} & & & & y_1\ar[d]_{\sigma_1} \ar@/_1pc/[u]_{\rho_{23} \sigma_1} \\
\vdots \ar[u]_{\rho_{23}} & & & & z_{\infty} \ar@/_1pc/[u]_{\rho_{23} \sigma_2} \\
 & \cdots \ar@/_1pc/[r]_{\sigma_{12}} \ar@/^1pc/@{.}[ul] & z_{-1} \ar@/_1pc/[r]_{\rho_2 \sigma_{12}} \ar[l]_{\rho_{23}} & z_0 \ar[l]_{\rho_3} \ar[ur]_{\rho_1 \sigma_3 + \rho_{123} \sigma_{123}} & & 
}
\end{displaymath}
\caption{A diagram obtained after removing all of the algebra element 1 from Figure~\ref{fig:trefoil_inv}.}
\label{fig:trefoil_simp}
\end{figure}
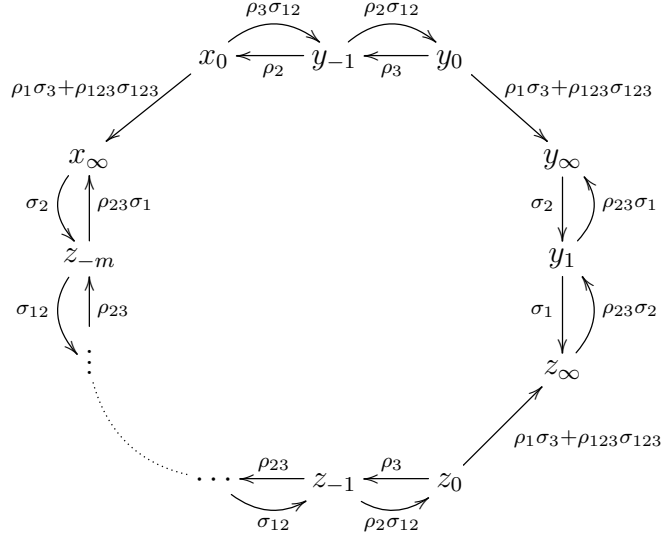

\section{Grading of the meridian complement}
\label{sec:grading}
For a sufficiently large positive integer $n$, the type-$D$ structure of the meridian complement in the integral Dehn surgery manifold $S^3_{-n} (K)$ is easily obtained by taking the $0$-surgery on the left boundary component. The associated type-$A$ module $\widehat{CFA} (\mathcal{H}_0)$ of the $0$-surgery has a single generator $\mathbf{t}$ satisfying the following structure.
\begin{displaymath}
m_{k+1} ( \mathbf{t}, \rho_2, \rho_{12}, \cdots, \rho_{12}, \rho_1 ) = \mathbf{t}.
\end{displaymath} 
Obviously $\mathbf{t}$ cannot be tensored with generators in $\imath_1 \jmath_1 \widehat{CFDD} ( \mathcal{H}_{\mathcal{L}_K} (n) )$. The derived tensor product of the $0$-surgery type-$A$ module with $\widehat{CFDD} ( \mathcal{H}_{\mathcal{L}_K} (n) )$ is a straight-forward computation, so we merely state the result in the following proposition. We omitted tensor with $\mathbf{t}$ from the statement for cosmetic reasons.

\begin{prop}
Under the same assumption in Theorem~\ref{thm:main}, the type-$D$ module $\widehat{CFD} ( S^3_{-n}(K) \backslash \mu_K )$ can be derived from $CFK^-(K)$ by the following procedure. \\
Let $\{ \mathbf{x}^k \}$ be a vertically simplified basis. For a vertical arrow of length $l$ from $\mathbf{x}^j$ to $\mathbf{x}^{j+1}$, the differential between the associated elements is
\begin{displaymath}
\xymatrix{
\mathbf{x}^j_{\infty} \ar[r]^{\sigma_2} & \mathbf{x}^j_1 \ar[r]^{\sigma_{12} } & \cdots \ar[r]^{\sigma_{12} } & \mathbf{x}^j_l \ar[r]^{\sigma_1 } & \mathbf{x}^{j+1}_{\infty}.
}
\end{displaymath}
Let $\{ \mathbf{y}^k \}$ be a horizontally simplified basis. For a horizontal arrow of length $l$ from $\mathbf{y}^j$ to $\mathbf{y}^{j+1}$, the differential between the associated elements is
\begin{displaymath}
\xymatrix{
\mathbf{y}^j_{\infty} & \mathbf{y}^j_{-1} \ar[l]_{\sigma_{123}} & \cdots \ar[l]_{\sigma_{12}} & \mathbf{y}^j_{-l} \ar[r]^{\sigma_3} \ar[l]_{\sigma_{12}} & \mathbf{y}^{j+1}_{\infty}.
}
\end{displaymath}
The unstable chain between the two distinguished elements is as follows.
\begin{displaymath}
\xymatrix{
\mathbf{x}^0_{\infty} & \gamma_1 \ar[l]_{\sigma_{123} } & \cdots \ar[l]_{ \sigma_{12} } & \gamma_m \ar[l]_{ \sigma_{12} } & \mathbf{y}^0_{\infty} \ar[l]_{\sigma_2},
}
\end{displaymath}
where $m=n+2 \tau(K)$.
\label{prop:zerosurgery}
\end{prop}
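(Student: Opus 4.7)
The plan is to compute the derived tensor product
\[
\widehat{CFA}(\mathcal{H}_0) \boxtimes M, \qquad M := \widehat{CFDD}(\mathcal{H}_{\mathcal{L}_K}(n)),
\]
using the bimodule from Theorem~\ref{thm:main} and the stated $\mathcal{A}_\infty$-action on $\mathbf{t}$. The first step is an idempotent reduction: since $\mathbf{t}$ occupies $\alpha_2^a$, only generators of $M$ carrying left idempotent $\imath_2$ survive tensoring with $\mathbf{t}$. Inspecting Theorem~\ref{thm:main}, this eliminates every generator of $\imath_1 \jmath_1 M$---namely $\mathbf{x}^j_0$, $\mathbf{y}^j_0$, $\mathbf{x}^0_0$, and $\mathbf{y}^0_0$---and retains exactly $\{\mathbf{x}^j_i\}_{i\neq 0}$, $\{\mathbf{y}^j_i\}_{i\neq 0}$, $\mathbf{x}^j_\infty$, $\mathbf{y}^j_\infty$, and the $\gamma_i$, matching the generating set of Proposition~\ref{prop:zerosurgery}.

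The only nonzero operations on $\mathbf{t}$ are the unital $m_2(\mathbf{t},1)=\mathbf{t}$ and the length-$k$ family $m_{k+1}(\mathbf{t},\rho_2,\rho_{12},\ldots,\rho_{12},\rho_1)=\mathbf{t}$. Each arrow of $\partial^{\boxtimes}$ therefore arises from one of two kinds of chains in $M$: (a) a single $\delta^1$-arrow whose left-algebra output is an idempotent (equivalently, an arrow labeled only by a $\sigma$-element), which descends directly with its $\sigma$-label intact via $m_2$; or (b) a chain of $k$ consecutive $\delta^1$-arrows whose left factors, read in order, form the pattern $(\rho_2,\rho_{12},\ldots,\rho_{12},\rho_1)$, contributing the product of the right factors.

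Type (a) handles the forward arrows $\sigma_2,\sigma_{12},\ldots,\sigma_1$ of the vertical chain, the backward $\sigma_{12}$-arrows of the horizontal chain, and the $\sigma_{12}$-arrows between the $\gamma_i$ together with the $\sigma_2$-arrow from $\mathbf{y}^0_\infty$ to $\gamma_m$, all by direct inspection. For type (b) I would note that no single $\delta^1$-arrow of $M$ carries a $\rho_{12}$-label, so only length-two chains with left sequence $(\rho_2,\rho_1)$ can fire. Exactly three such chains exist, each bridging through a dying $\imath_1$-generator: $\mathbf{y}^j_{-1}\to\mathbf{y}^j_0\to\mathbf{y}^j_\infty$ contributes $\sigma_{12}\cdot\sigma_3=\sigma_{123}$; $\mathbf{y}^j_{-l}\to\mathbf{y}^{j+1}_0\to\mathbf{y}^{j+1}_\infty$ contributes $\sigma_3$; and $\gamma_1\to\mathbf{x}^0_0\to\mathbf{x}^0_\infty$ contributes $\sigma_{123}$. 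These account for the three remaining long arrows in the proposition.

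The main obstacle is ruling out exotic higher-length chains. I would verify by a short case check that the parallel chains through the $\rho_{123}\sigma_{123}$ branch give left sequence $(\rho_2,\rho_{123})$, which neither matches any $m_{k+1}$ nor has nonzero algebra product (since $\rho_2\rho_{123}=0$); and that the $\rho_{23}$-labeled backward arrows in the vertical chain, as well as the forward $\rho_3,\rho_{23}$-arrows in the horizontal and unstable chains, never begin or extend a valid $(\rho_2,\rho_{12},\ldots,\rho_1)$ sequence---since $\rho_{23}^2=0$ and no remaining product in the torus algebra yields $\rho_2$ or $\rho_{12}$. The scarcity of $\rho_{12}$-labels in $M$ and the rigidity of the $\mathcal{A}_\infty$-pattern make this final check mechanical.
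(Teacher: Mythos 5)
Your computation is correct and takes the same approach as the paper, which merely declares the derived tensor product $\widehat{CFA}(\mathcal{H}_0) \boxtimes \widehat{CFDD}(\mathcal{H}_{\mathcal{L}_K}(n))$ to be a ``straight-forward computation'' and states the result without detail. Your idempotent reduction, the identification of the surviving $\sigma$-labelled arrows via $m_2$, and the three length-two chains $(\rho_2,\rho_1)$ yielding $\sigma_{123}$, $\sigma_3$, and $\sigma_{123}$ are exactly the steps that the paper leaves implicit; the only cosmetic quibble is that the parenthetical ``$\rho_2\rho_{123}=0$'' is irrelevant, since the $\rho$-factors feed into $m_{k+1}$ rather than being multiplied, and the decisive point---which you also state---is that $(\rho_2,\rho_{123})$ is not an admissible input sequence for any $m_{k+1}$.
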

For simplicity, we will assume that $n$ is an even integer without loss of generality. \\

We often regard the complex $CFK^-(K)$ as a directed graph on a plane with vertices on integral lattice, such that every arrow is pointing either left or downward (disregarding diagonal arrows as in the standard bordered Floer theory). Viewing the type-$D$ module as a directed graph would help understand the structure of a meridional knot in the Dehn surgery manifold, just like the classical knot Floer complex. In case $CFK^-(K)$ has a horizontally and vertically simplified basis, we will view $\widehat{CFD} ( S^3_{-n}(K) \backslash \mu_K )$ as a directed graph on $(q,r)$-plane such that each arrow is labelled as $\sigma_I$, $I \in \{ 1,2,3,12,23,123 \}$. We let the coordinates of generators on the plane obey the following rule.

\begin{itemize}
  \item The coordinates of generators in $\jmath_2 \widehat{CFD} ( S^3_{-n}(K) \backslash \mu_K )$ are determined as follows. Since there is a one-to-one correspondence between $CFK^-(K)$ and $\jmath_2 \widehat{CFD} ( S^3_{-n}(K) \backslash \mu_K )$ (the $\infty$-labelled generators), put $\mathbf{x}_{\infty}$ so that the $(q,r)$-coordinates of $\mathbf{x}_{\infty}$ and the corresponding $\mathbf{x} \in CFK^-(K)$ agree.
  \item Suppose $\mathbf{x}^j, \mathbf{x}^{j+1} \in CFK^-(K)$ have coordinates $(q,r)$ and $(q,r-l)$ and have a vertical arrow of length $l$ between them, then 
  \begin{itemize}
    \item put $\mathbf{x}^j_1$ on $(q,r-\frac{1}{2})$;
    \item put $\mathbf{x}^j_m$ so that $\mathbf{x}^j_{m-1}$ and $\mathbf{x}^j_m$ have the same $q$-coordinates but $r$-coordinate of $\mathbf{x}^j_m$ is one less than that of $\mathbf{x}^j_{m-1}$.
  \end{itemize}
  \item Suppose $\mathbf{y}^j, \mathbf{y}^{j+1} \in CFK^-(K)$ have coordinates $(q,r)$ and $(q-l,r)$ and have a horizontal arrow of length $l$ between them, then 
  \begin{itemize}
    \item put $\mathbf{y}^j_{-1}$ on $(q-\frac{1}{2},r)$;
    \item put $\mathbf{y}^j_{-m}$ so that $\mathbf{y}^j_{-(m-1)}$ and $\mathbf{y}^j_{-m}$ have the same $r$-coordinates but $q$-coordinate of $\mathbf{y}^j_{-m}$ is one less than that of $\mathbf{y}^j_{-(m-1)}$.
  \end{itemize}
  \item Recall that there are two distinguished elements $\mathbf{x}^0$ and $\mathbf{y}^0$, which generate homologies $H_* (C^h)$ and $H_* (C^v)$ respectively. If the distinguished element $\mathbf{x}^0 \in CFK^-(K)$ is on $(q_0, r_0)$, then let the $(q,r)$-coordinates of $\gamma_{\mu}$, $\mu \leq \frac{1}{2} m$ be $(q_0 - \frac{1}{2} - (\mu -1) ,r_0)$. Similarly, if the distinguished element $\mathbf{y}^0 \in CFK^-(K)$ is on $(r_0, q_0)$, then let the $(q,r)$-coordinates of $\gamma_{\mu}$, $\mu > \frac{1}{2} m$ be  $(r_0, q_0 - \frac{1}{2} - (m-\mu) )$. 
\end{itemize}
See Figure~\ref{fig:grading1} for some examples. \\

\begin{figure}
\begin{center}
\includegraphics[scale=0.6]{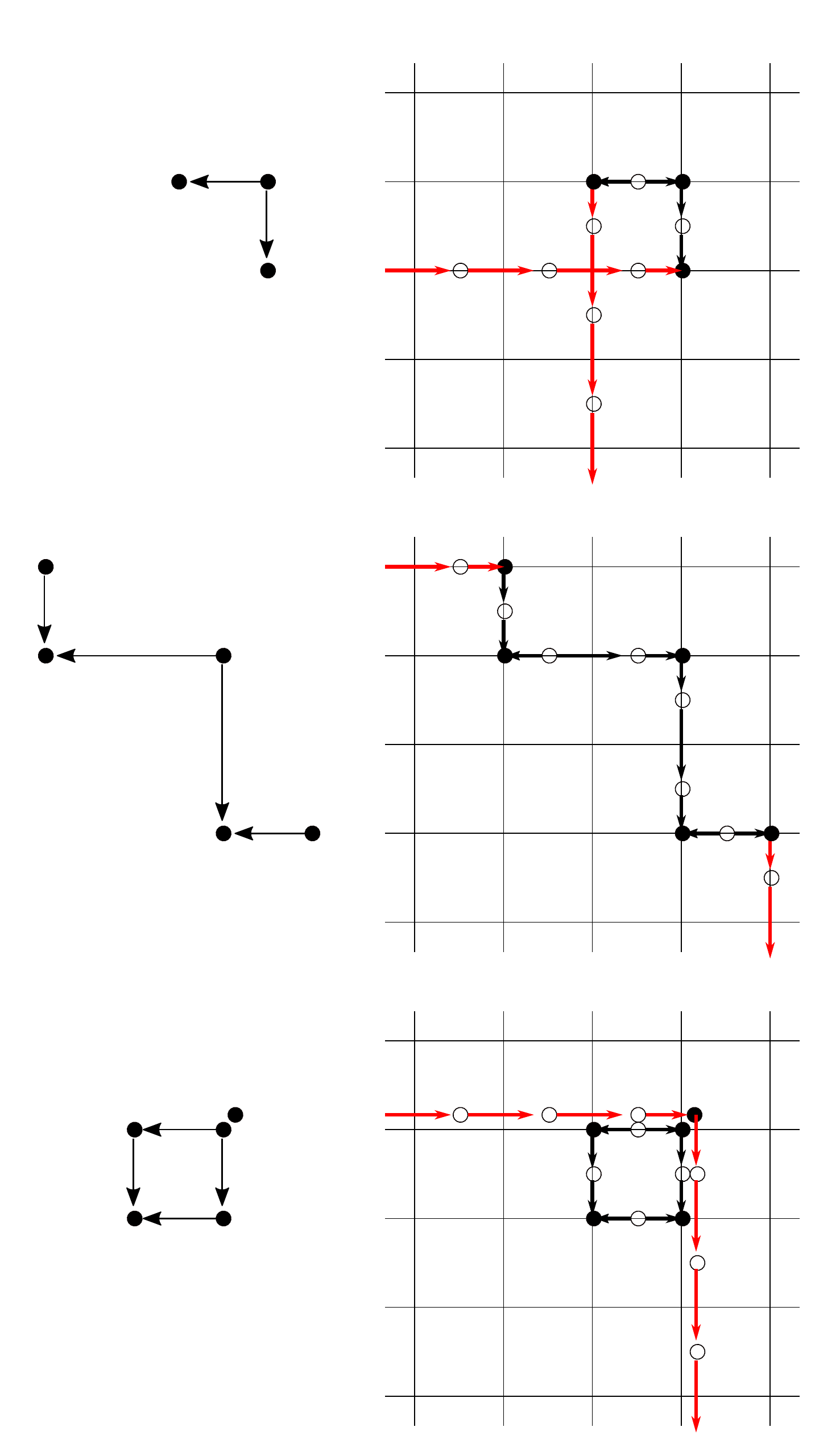}
\caption{From top to bottom, the figures on the left column are the knot Floer complexes of the right-handed trefoil, left-handed (3,4)-torus knot, and the figure-eight knot. The figures on the right column illustrate the type-$D$ module of the meridian complement in the Dehn surgery manifold; the black dots represent generators in $\jmath_2$-idempotent and the white dots represent $\jmath_1$-idempotent. Each arrow is implicitly labelled with $\sigma_I$ according to Proposition~\ref{prop:zerosurgery}. The unstable chain is colored in red.}
\label{fig:grading1}
\end{center}
\end{figure}

Recall that the type-$A$ module $\widehat{CFA} (\mathcal{H}_{\infty})$ associated to the $\infty$-surgery is generated by a single generator $\mathbf{u}$ with the relation
\begin{displaymath}
m_{k+1} (\mathbf{u}, \sigma_3, \sigma_{23}, \cdots, \sigma_{23}, \sigma_2) = \mathbf{u}.
\end{displaymath}
The derived tensor product of $\widehat{CFA} (\mathcal{H}_{\infty})$ with $\widehat{CFD} ( S^3_{-n}(K) \backslash \mu_K )$ will be graded by $P_1 \backslash G / P_2$, where $P_1$ and $P_2$ are subgroups of $G$ generated by gradings of the periodic domains of Heegaard diagrams of $\widehat{CFD} ( S^3_{-n}(K) \backslash \mu_K )$ and  $\widehat{CFA} (\mathcal{H}_{\infty})$ respectively. More precisely, $P_1$ is spanned by $(v;-n,-1)$ for some $v$ and $P_2$ is spanned by $(\frac{3}{2};0,1)$. We will be focusing on the $spin^c$-component of the grading, and the quotient by the action of the Maslov component of this double-coset space will be denoted by $\widetilde{G} : = (P_1 \backslash G / P_2) / \mathbb{Z}$. We have the following decomposition
\begin{eqnarray*}
& & \widehat{CFA} (\mathcal{H}_{\infty}) \boxtimes \widehat{CFD} (S^3_{-n} (K) \backslash \mu_K) \\
& & \cong \widehat{HFK}_* (S^3_{n}(K), \mu_K) =  \bigoplus_{\mathfrak{s}_k \in \mathrm{Spin}^c(S^3_{-n}(K))} \widehat{HFK}_* (S^3_{n}(K), \mu_K, \mathfrak{s}_k).
\end{eqnarray*}
Then, generators in $\widehat{CFA} (\mathcal{H}_{\infty}) \boxtimes \widehat{CFD} (S^3_{-n} (K) \backslash \mu_K)$ that share the same grading in $\widetilde{G}$ belong to the same $spin^c$-summand. \\

Note that the generators in $\widehat{CFA} (\mathcal{H}_{\infty}) \boxtimes \widehat{CFD} (S^3_{-n} (K) \backslash \mu_K)$ have a bijective correspondence to the generators in $\jmath_1 \widehat{CFD} (S^3_{-n} (K) \backslash \mu_K)$. In the following Lemma, we blur the difference between these spaces.

\begin{lem}
Consider $\widehat{CFD} (S^3_{-n} (K) \backslash \mu_K)$ as a directed, $\sigma_I$-labelled graph on $(q,r)$-plane. Then for any integer $k$, the generators in $\jmath_1 \widehat{CFD} (S^3_{-n} (K) \backslash \mu_K)$ that lie on the line $-q+r = k + \frac{1}{2}$ have the same grading in $\widetilde{G}$.
\end{lem}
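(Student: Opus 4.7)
The plan is to extract the $spin^c$-information of the grading by bookkeeping the algebra elements $\sigma_I$ that label the differential of Proposition~\ref{prop:zerosurgery}, and to match these local data against the planar coordinates $(q,r)$ just assigned to the generators. First I would compute the $spin^c$-component of $\mathrm{gr}(\sigma_I)^{-1}$ for every $I \in \{1,2,3,12,23,123\}$ from the grading rules of the torus algebra, obtaining by the multiplication in $G$ that $\mathrm{gr}(\sigma_{12}) = (-\tfrac12;1,0)$, $\mathrm{gr}(\sigma_{23}) = (-\tfrac12;0,1)$, and $\mathrm{gr}(\sigma_{123}) = (-\tfrac12;\tfrac12,\tfrac12)$, then inverting each.

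The central observation to verify is a local identity: for every arrow $A \to \sigma_I \otimes B$ appearing in Proposition~\ref{prop:zerosurgery}, the first $spin^c$-component of $\mathrm{gr}(\sigma_I)^{-1}$ coincides with $\Delta(-q+r) := (-q(B)+r(B)) - (-q(A)+r(A))$. Since the six arrow types are enumerated (two per chain among the vertical, horizontal, and unstable chains) and the placement rules for all generators are listed just before the lemma, this reduces to a direct inspection. For instance, a $\sigma_{12}$ arrow contributes $-1$ to the first $spin^c$-coordinate while decreasing $-q+r$ by exactly $1$; a $\sigma_2$ arrow contributes $-\tfrac12$ while decreasing $-q+r$ by $\tfrac12$; and similarly for $\sigma_1, \sigma_3, \sigma_{123}$.

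Because the $spin^c$-component is additive under multiplication in $G$, this identity telescopes along any directed path of arrows (possibly traversed backwards): for any two generators $A, B$ connected in the module, the first $spin^c$-coordinate of the grading shift $\mathrm{gr}(B)\mathrm{gr}(A)^{-1}$ equals $\Delta(-q+r)$ exactly. A parallel tabulation shows that the second $spin^c$-component of $\mathrm{gr}(\sigma_I)^{-1}$ is $\pm\tfrac12$ for $\sigma_1, \sigma_2, \sigma_3, \sigma_{123}$ and integer for $\sigma_{12}, \sigma_{23}$; the crucial feature is that the arrows crossing between the idempotents $\jmath_1$ and $\jmath_2$ in any chain always pair up (one entering, one exiting), so the net shift in the second coordinate between two $\jmath_1$-generators is always an integer.

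Putting these together, for two $\jmath_1$-generators $A, B$ on the line $-q+r = k+\tfrac12$ the grading shift has first $spin^c$-coordinate $0$ and integer second $spin^c$-coordinate, placing it in $\langle(0,1)\rangle$, which is the $spin^c$-projection of $P_2$. Hence $A$ and $B$ represent the same class in $\widetilde{G}$. The main obstacle I anticipate is the half-integer bookkeeping of the second $spin^c$-coordinate on crossing arrows; correctly pairing one entering ($\sigma_2$ or backwards $\sigma_{123}$) with one exiting ($\sigma_1$ or $\sigma_3$) crossing per chain is what prevents a leftover half-integer residue that $P_2$ could not absorb.
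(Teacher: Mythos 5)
Your computations of the torus-algebra gradings and the per-arrow $\operatorname{spin}^c$-shifts are correct, and the strategy of matching $\Delta(-q+r)$ against the first $\operatorname{spin}^c$-coordinate, edge by edge, is essentially the same bookkeeping the paper does. The tabulation and the idea that the half-integer contributions from entering and exiting $\jmath_1$ pair up are fine. However, there is a genuine gap: your claimed ``local identity'' fails at the corner of the unstable chain.

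Concretely, the placement rule just before the lemma puts $\gamma_{m/2}$ at $\left(q_0 - \tfrac{m}{2} + \tfrac{1}{2}, r_0\right)$ and $\gamma_{m/2+1}$ at $\left(r_0, q_0 - \tfrac{m}{2} + \tfrac{1}{2}\right)$, and these two are joined by a $\sigma_{12}$ arrow. Along this arrow, $\Delta(-q+r) = -2(q_0-r_0) + m - 1 = -2\tau(K) + \bigl(n + 2\tau(K)\bigr) - 1 = n - 1$, whereas the first $\operatorname{spin}^c$-coordinate of $\operatorname{gr}(\sigma_{12})^{-1}$ is $-1$. The discrepancy is exactly $n$, which is \emph{not} absorbed by $P_2 = \langle(0,1)\rangle$; it needs $P_1$, whose $\operatorname{spin}^c$-component is generated by $(-n,-1)$ (this is the periodic-domain data coming from the $0$-surgery filling on the $\rho$-side). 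Your final paragraph only appeals to $P_2$ and never invokes $P_1$, so the conclusion ``grading shift has first $\operatorname{spin}^c$-coordinate $0$'' does not actually follow once the connecting path between two $\jmath_1$-generators on $L_k$ wraps through the unstable chain corner. To repair the argument you would need to add: the telescoped first coordinate is $0$ modulo $n$, and the companion second-coordinate shift of $\mp1$ produced by modding out a multiple of the $P_1$-generator still lands you in $\langle(0,1)\rangle$, so the class in $\widetilde{G} = (P_1 \backslash G / P_2)/\mathbb{Z}$ is unchanged. The paper's proof, while terse about the corner, sidesteps this by phrasing everything in terms of $\widetilde{G}$-grading from the outset, so the $P_1$-quotient is already built in when tracking the shifts.
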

\begin{proof}
Suppose that generators $a,b \in \widehat{CFD} (S^3_{-n} (K) \backslash \mu_K)$ are connected by a directed edge $\xymatrix{ a \ar[r]^{\sigma_I} & b}$. For simplicity, let $\widetilde{\mathrm{gr}} (a) = (0,0)$. Then the $\widetilde{\mathrm{gr}}(b)$ is,
\begin{displaymath}
\widetilde{\mathrm{gr}}(b) = \left\{
  \begin{array}{ll}
  (-1/2,1/2) & \textrm{if } I = 1 \\
  (-1/2,-1/2) & \textrm{if } I = 2 \\
  (1/2,-1/2) & \textrm{if } I = 3 \\
  (-1,0) & \textrm{if } I = 12 \\
  (0,-1) & \textrm{if } I = 23 \\
  (-1/2,-1/2) & \textrm{if } I = 123
  \end{array}
\right.
\end{displaymath}
To prove the claim, it suffices to keep track of the grading changes in the horizontal and vertical sequences. If $\mathbf{y}^j, \mathbf{y}^{j+1} \in CFK^-(K)$ have a horizontal arrow of length $l$ $\xymatrix{\mathbf{y}^{j+1} & \mathbf{y}^j \ar[l]}$, then the corresponding sequence in $\widehat{CFD} (S^3_{-n} (K) \backslash \mu_K)$ is  
\begin{displaymath}
\xymatrix{
\mathbf{y}^{j+1}_{\infty} & \mathbf{y}^{j}_{-l} \ar[l]_{\sigma_3} \ar[r]^{\sigma_{12}} & \mathbf{y}^j_{-l+1} \ar[r]^{\sigma_{12}} & \cdots \ar[r]^{\sigma_{12}} & \mathbf{y}^j_{-1} \ar[r]^{\sigma_{123}} & \mathbf{y}^j_{\infty} 
}
\end{displaymath}
and it is clear that the first factors of the $\widetilde{G}$-grading of $\jmath_1$-idempotent generators are `increasing' by one from right to left. By similar observation, the first factors of the $\widetilde{G}$-grading of $\jmath_1$-idempotent generators in a vertical sequence are 'decreasing' by one from top to bottom. \\

The proof can be completed by considering the following four cases.
\begin{itemize}
  \item A vertical sequence follows a horizontal sequence
  \item A horizontal sequence follows a vertical sequence
  \item A horizontal and a vertical sequence start at a generator
  \item A horizontal and a vertical sequence end at a generator
\end{itemize}
The figure below illustrates the respective corresponding sequence in the type-$D$ module $\widehat{CFD} (S^3_{-n} (K) \backslash \mu_K )$.
\begin{displaymath}
\xymatrix{
\bullet \ar[d]^{\sigma_2} & a \ar[l]_{\sigma_3} \ar[r]^{\sigma_{12}} & \cdots & & & & \vdots \ar[d]^{\sigma_{12}} \\
b \ar[d]^{\sigma_{12}} & & & & & & d \ar[d]^{\sigma_1} \\ 
\vdots & & & & \cdots \ar[r]^{\sigma_{12}} & c \ar[r]^{\sigma_{123}} & \bullet \\
\cdots \ar[r]^{\sigma_{12}} & e \ar[r]^{\sigma_{123}} & \bullet \ar[d]^{\sigma_2} & & \vdots \ar[d]^{\sigma_{12}} & & \\
& & f \ar[d]^{\sigma_{12}} & & g \ar[d]^{\sigma_1} & & \\
& & \vdots & & \bullet & h \ar[l]_{\sigma_3} \ar[r]^{\sigma{12}} & \cdots \\
}
\end{displaymath} 
The $G$-grading difference between $a$ and $b$ is $\pm(0,1) \in P_2$, thus they are in the same $\widetilde{G}$-grading. Likewise $c$ and $d$ have the same $\widetilde{G}$-grading, too. The $\widetilde{G}$-grading difference between $e$ and $f$ (and $g$ and $h$) are precisely $(1,0)$. This completes the proof.
\end{proof}

\begin{thm}
Let $k \in \mathbb{Z}$. Viewing $\widehat{CFD} (S^3_{-n} (K) \backslash \mu_K)$ as a graph on $(q,r)$-plane, the vector space generated by vertices on the line $L_k := \{ (q,r) | -q+r = k + \frac{1}{2} \}$ is isomorphic to
\begin{displaymath}
H_* \left( \frac{\widehat{CF}(S^3)}{\mathcal{F}(K,k)} \right) \oplus H_* \left( \frac{\widehat{CF}(S^3)}{\mathcal{F}(K,-k-1)} \right).
\end{displaymath}
\end{thm}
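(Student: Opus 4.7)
The plan is to prove the isomorphism by a dimension count, decomposing the vertices on $L_k$ into two families matched with the two summands on the right-hand side.

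First, I enumerate the generators of $\widehat{CFD}(S^3_{-n}(K)\setminus\mu_K)$ lying on $L_k$ using the coordinate rules immediately preceding the theorem. For a vertical arrow of length $l$ from $\mathbf{x}^j$ to $\mathbf{x}^{j+1}$, the intermediate generator $\mathbf{x}^j_m$ sits at $(0,A(\mathbf{x}^j)-m+\tfrac{1}{2})$, so exactly one such generator lies on $L_k$ iff $A(\mathbf{x}^{j+1})\le k<A(\mathbf{x}^j)$; write this count $N^v_k$. A parallel analysis for horizontal arrows yields $N^h_k$. For the unstable chain, a direct substitution into the coordinate formula for $\gamma_\mu$ shows that exactly one $\gamma_\mu$ lies on $L_k$ for each $k$ in the valid range; the index $\mu$ lies in the ``horizontal half'' ($\mu\le m/2$) or the ``vertical half'' ($\mu>m/2$) depending on $k$.

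Next, I compute the dimensions of the two summands on the right. Using a vertically simplified basis, $\widehat{CFK}(K)/\mathcal{F}(K,k)$ is freely generated by the basis elements $\mathbf{x}^j$ with $A(\mathbf{x}^j)>k$, and the induced differential kills every vertical arrow whose target drops to Alexander $\le k$. Hence its homology consists of (i) the distinguished cycle $\mathbf{x}^0$ whenever $A(\mathbf{x}^0)>k$, and (ii) one surviving source for each vertical arrow straddling $L_k$, yielding
\[
\dim H_*(\widehat{CFK}/\mathcal{F}(K,k))=N^v_k+[A(\mathbf{x}^0)>k].
\]
The conjugation symmetry of $CFK^\infty$, which interchanges the Alexander and $(-U)$-filtrations, allows the companion dimension to be read off a horizontally simplified basis with the roles of V and H arrows swapped, producing $\dim H_*(\widehat{CFK}/\mathcal{F}(K,-k-1))=N^h_k+[\text{analogous indicator involving }\mathbf{y}^0]$.

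Summing, the right-hand side has dimension $N^v_k+N^h_k+\bigl([A(\mathbf{x}^0)>k]+[\text{indicator for }\mathbf{y}^0]\bigr)$. Using $A(\mathbf{x}^0)=\tau(K)$ and the symmetric fact that the distinguished horizontal element corresponds to Alexander grading $-\tau(K)$, one checks by a short case analysis that the two indicators sum to exactly $1$ for every $k$ in the range $-n/2\le k\le n/2-1$. This single unit corresponds precisely to the unique unstable-chain generator $\gamma_\mu$ that lies on $L_k$, the transition $\mu=m/2$ aligning with the flip in which indicator is nonzero.

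The main obstacle is not the per-arrow enumeration, which is a mechanical unpacking of coordinates, but rather the uniform matching of the single unstable-chain contribution with the correct ``distinguished cycle'' indicator over the entire range of $k$. The identity $m=n+2\tau(K)$ is exactly what forces the transition at $\mu=m/2$ to occur at the Alexander level where the two indicators swap, so that $N^v_k+N^h_k+1$ on the left matches $N^v_k+N^h_k+1$ on the right. Verifying this identity carefully at the boundary values $k=\tau(K)-1$ and $k=-\tau(K)$ (where one indicator turns off while the other turns on) is the only genuinely delicate piece of the argument.
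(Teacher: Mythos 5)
The dimension-counting strategy is sound, and the formulas
\[
\dim H_*\!\left(\widehat{CFK}/\mathcal{F}(K,k)\right)=N^v_k+[A(\mathbf{x}^0)>k],
\qquad
N^h_k=N^v_{-k-1}
\]
are correct. However, two of your intermediate claims are false, and the gap is not cosmetic. You assert that \emph{exactly one} $\gamma_\mu$ lies on $L_k$ and that the two indicator functions sum to \emph{exactly} $1$ for every $k$ in the range, describing a single transition at $\mu=m/2$. Neither holds once $\tau(K)\neq 0$. Substituting the paper's coordinate formulas shows that the horizontal half of the unstable chain meets $L_k$ precisely when $k\geq -\tau(K)$ and the vertical half meets $L_k$ precisely when $k\leq \tau(K)-1$; for $\tau(K)>0$ these intervals overlap in a band of length $2\tau(K)$, so there are two unstable-chain generators on $L_k$ there, and for $\tau(K)<0$ there is a gap where there are none. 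The same case analysis shows the indicators $[\tau(K)>k]$ and $[\tau(K)>-k-1]$ sum to $2$, $1$, or $0$ in exactly the same ranges. (For the right-handed trefoil this is visible already at $k=0$: both indicators are $1$, both halves of the unstable chain hit $L_0$, and the right-hand side has dimension $3$, not the $N^v_0+N^h_0+1 = 2$ your count would give.)

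What rescues the argument is that the number of unstable-chain generators on $L_k$ and the sum of the two indicators are always equal---both equal $[\,k\leq\tau(K)-1\,]+[\,k\geq-\tau(K)\,]$---but you never establish this identity; you instead assert that each quantity is identically $1$, which is wrong, and the two errors happen to cancel. A correct version of your dimension count must verify the range of validity of $\mu_h$ and $\mu_v$ and compare each directly to the corresponding indicator, rather than claiming a single transition. Note also that the paper's own proof avoids this bookkeeping entirely by matching each vertex on $L_k$ bijectively with a generator of one of the two quotient homologies: intersections with vertical sequences (including the vertical half of the unstable chain, whose starting $\jmath_2$-generator is the distinguished cycle) land in $H_*(\widehat{CF}(S^3)/\mathcal{F}(K,k))$, and similarly for horizontal. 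That approach never needs to tally the unstable chain separately, because the distinguished element is handled by the same bijection as every other arrow source.
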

\begin{proof}
The line $L_k$ can intersect with horizontal or vertical sequences; if $L_k$ intersects with the vertical sequence, then this implies there is a $\jmath_2$-idempotent generator $\mathbf{y}^j_{\infty}$ endowed with the sequence 
\begin{displaymath}
\xymatrix{ 
\mathbf{y}^j_{\infty} \ar[r]^{\sigma_2} & \mathbf{y}^j_1 \ar[r]^{\sigma_{12}} & \mathbf{y}^j_2 \ar[r]^{\sigma_{12}} & \cdots.
}
\end{displaymath}
Recall that we have a bijective correspondence between generators in $CFK^-(K)$ and $\jmath_2 \widehat{CFD} (S^3_{-n} (K) \backslash \mu_K)$. If the corresponding element $\mathbf{y}^j \in CFK^-(K)$ of $\mathbf{y}^j_{\infty}$ is not the distinguished element in the vertical complex, then there should be $\mathbf{y}^{j+1} \in CFK^-(K)$ with $\partial^v (\mathbf{y}^j) = \mathbf{y}^{j+1}$. Observe that the Alexander filtration of $\mathbf{y}^j$ is greater than $k$ and $\mathbf{y}^{j+1}$ is less than or equal to $k$. Thus, the existence of the intersection of $L_k$ and the vertical sequence imply $\mathbf{y}^j \in H_* ( \widehat{CF}(S^3) / \mathcal{F}(K,k)$. If $\mathbf{y}^j$ is the distinguished element in the vertical complex, i.e., $j=0$, then the Alexander filtration level of $\mathbf{y}^0$ is again greater than $k$ and $\mathbf{y}^0 \in H_* ( \widehat{CF}(S^3) / \mathcal{F}(K,k)$. \\
Similarly, the existence of the intersection of $L_k$ and the horizontal sequence starting at $\mathbf{x}^j_{\infty}$ imply $\mathbf{x}^j \in CFK^-(K)$ has the Alexander filtration greater than $-k-1$ and $\partial^h (\mathbf{x}^j)$ has a filtration level less than or equal to $-k-1$.  
\end{proof}

\begin{proof}[Proof of Theorem~\ref{thm:sub}]
It remains to find $k$ values that admit nonempty intersections between the graph and line $L_k$. Let $\mathbf{x}^0$ and $\mathbf{y}^0$ be the distinguished generators that generate $H_* (C^h)$ and $H_* (C^v)$ respectively. Observe that if $\mathbf{x}^0$ lies on $(q_0,r_0)$, then $\mathbf{y}^0$ is on $(r_0,q_0)$ and $\tau(K) = q_0 - r_0$. The generators of the unstable sequence, which have been horizontally placed, end at $(q_0-\frac{1}{2}m + \frac{1}{2}, r_0)$; and the vertical unstable sequence generators end at $(r_0, q_0 -\frac{1}{2}m + \frac{1}{2})$. Since $m=n+ 2 \tau(K)$, this implies the line $L_k$ has a nonempty intersection if $- \frac{1}{2}n \leq k \leq \frac{1}{2}n -1$.
\end{proof}

\begin{figure}
\begin{center}
\includegraphics[scale=0.6]{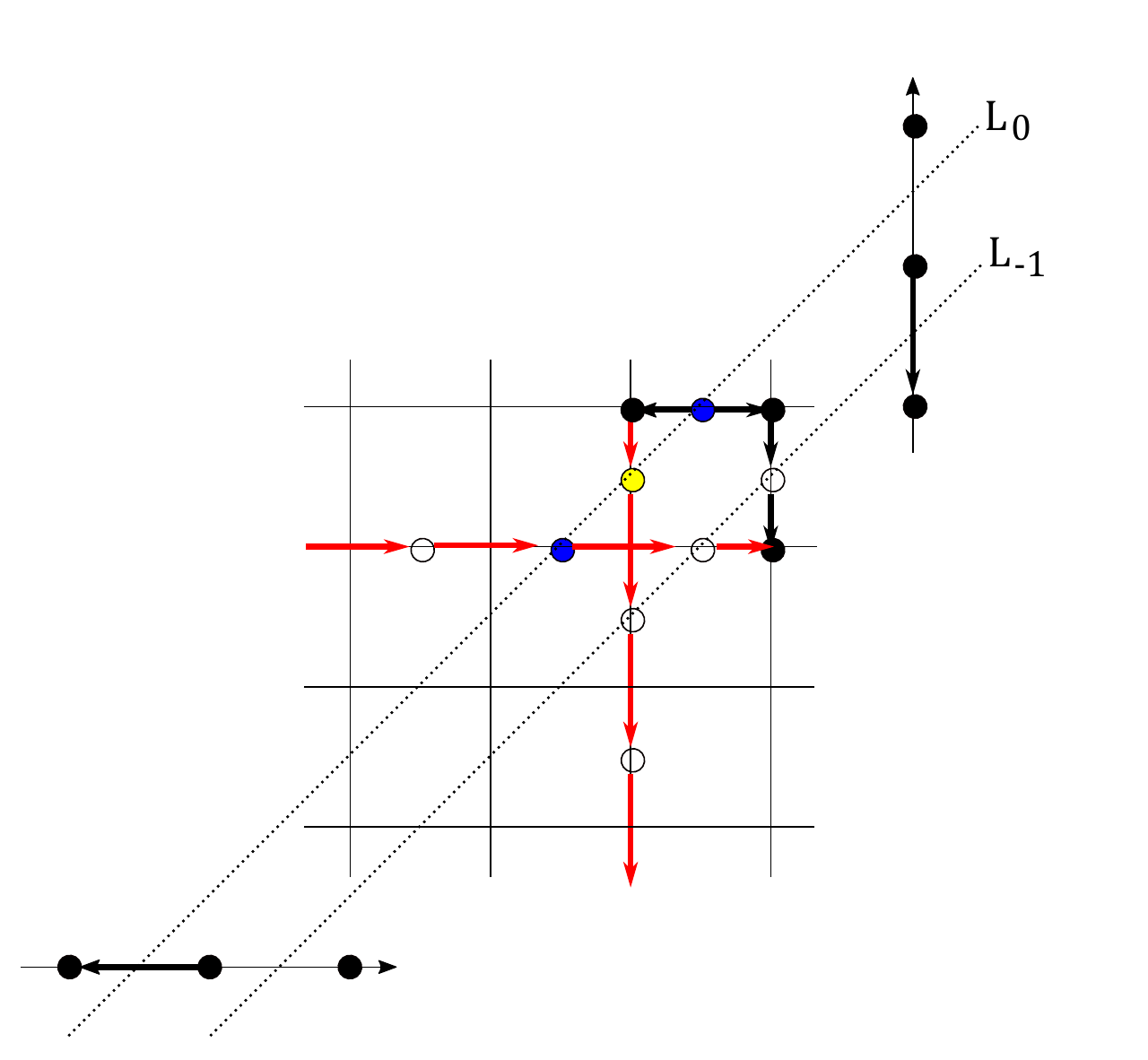}
\caption{The figure illustrates the type-$D$ module of the meridian complement in large integral surgery manifold along the right-handed trefoil. An intersection with $L_k$ and the horizontal sequence of the graph (blue dot) corresponds to a generator in the quotient complex $C^h / \mathcal{F}(K, -k-1)$ (on the bottom left line, dots on the right of the dashed line $L_k$).  Also, an intersection point in the vertical sequence (yellow dot) corresponds to a generator in $C^v / \mathcal{F}(K,k)$ (on the top right line, dots above the dashed line $L_k$). }
\end{center}
\end{figure}

\section{Example: Poincare sphere and meridional class}
\label{sec:example}

Recall that an integral homology sphere is obtained by a (sequence of) knot with surgery coefficient $\pm 1$. Hence, we first need to extend Theorem~\ref{thm:main} to arbitrary framing.

\begin{prop}
Let $CFK^-(K)$ be a reduced chain complex of a knot $K$ in $S^3$. For an arbitrary integer $n$, the type-$DD$ module of $S^3 \backslash \mathcal{L}_K$ with framing $n$ can be derived from the algorithm in Theorem~\ref{thm:main} except for the unstable chain. The unstable chain has the following structure, depending on the framing $n$.
\begin{itemize}
  \item If $n < 2 \tau(K)$
  \begin{displaymath}
  \xymatrix{
\mathbf{x}^0_{\infty} & & \mathbf{x}^0_0 \ar[ll]_{\rho_1 \sigma_3 + \rho_{123} \sigma_{123} } \ar@/^/[r]^{\rho_3} & \gamma_1 \ar@/^/[l]^{\rho_2 \sigma_{12} } \ar@/^/[r]^{\rho_{23}} & \cdots \ar@/^/[l]^{ \sigma_{12} } \ar@/^/[r]^{\rho_{23}} & \gamma_m \ar@/^/[l]^{ \sigma_{12} } \ar@/^/[r]^{\rho_{23} \sigma_1 } & \mathbf{y}^0_{\infty} \ar@/^/[l]^{\sigma_2} & & \mathbf{y}^0_0, \ar[ll]_{\rho_1 \sigma_3 + \rho_{123} \sigma_{123} }
}
  \end{displaymath}
  where $m = 2 \tau(K) -n$. 
  \item If $n = 2 \tau(K)$
  \begin{displaymath}
  \xymatrix{
  \mathbf{x}^0_{\infty} & & \mathbf{x}^0_0 \ar[ll]_{\rho_1 \sigma_3 + \rho_{123} \sigma_{123} } \ar@/^/[r]^{\rho_3 \sigma_1} & \mathbf{y}^0_{\infty} \ar@/^/[l]^{\rho_2 \sigma_2} & & \mathbf{y}^0_0. \ar[ll]_{\rho_1 \sigma_3 + \rho_{123} \sigma_{123} }
  }
  \end{displaymath}
  \item If $n > 2 \tau(K)$
  \begin{displaymath}
  \xymatrix{
\mathbf{x}^0_{\infty} & & \mathbf{x}^0_0 \ar[ll]_{\rho_1 \sigma_3 + \rho_{123} \sigma_{123} } \ar@/^/[r]^{\rho_3 \sigma_{12}} & \gamma_1 \ar@/^/[l]^{\rho_2} \ar@/^/[r]^{\sigma_{12}} & \cdots \ar@/^/[l]^{ \rho_{23} } \ar@/^/[r]^{\sigma_{12}} & \gamma_m \ar@/^/[l]^{ \rho_{23} } \ar@/^/[r]^{\sigma_1 } & \mathbf{y}^0_{\infty} \ar@/^/[l]^{\rho_{23} \sigma_2} & & \mathbf{y}^0_0, \ar[ll]_{\rho_1 \sigma_3 + \rho_{123} \sigma_{123} }
}
  \end{displaymath}
  where $m = n- 2 \tau (K)$.
\end{itemize}
\end{prop}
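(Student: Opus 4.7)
The plan is to bootstrap from Theorem~\ref{thm:main} by successively tensoring, on the left (knot-complement) boundary, with the type-$DA$ bimodule $\tau_{\pm 1}^{DA}$ of the $\pm 1$ Dehn twist on the torus from~\cite[Section 10.2]{LOT11}. Since Theorem~\ref{thm:main} yields the $DD$-module at framing $-N$ for every sufficiently large positive $N$, any integer framing $n$ is reached after applying $\tau_{+1}^{DA}$ exactly $n + N$ times (or $\tau_{-1}^{DA}$ if moving the other way). Because $\tau_{\pm 1}^{DA}$ acts only on the $\rho$-side, the $\sigma$-side idempotents and $\sigma$-labeled differentials of the $DD$-module are preserved throughout the iteration.

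First I would verify that the horizontal and vertical arrow chains of Theorem~\ref{thm:main} are invariant (up to cancelling algebra-one differentials) under $\boxtimes \tau_{\pm 1}^{DA}$. This is a local calculation on the torus algebra: each chain is a short concatenation whose $\rho$-labels lie in $\{\rho_3, \rho_{23}, \rho_{123}, 1\}$, and the bimodule $\tau_{\pm 1}^{DA}$ is explicit enough that one can check directly that these local pieces reappear in the same form after pairing and reduction. The outcome is that the only piece of $\widehat{CFDD}(\mathcal{H}_{\mathcal{L}_K}(n))$ that genuinely depends on $n$ is the unstable chain, so the remainder of the argument reduces to tracking that chain.

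Next I would compute $\tau_{+1}^{DA} \boxtimes (\text{unstable chain})$ inductively, watching the chain's length and shape change by one with each increment of the framing. Starting at framing $-N$, the chain has length $N + 2\tau(K) = 2\tau(K) - (-N)$, consistent with case 1. Each application of $\tau_{+1}^{DA}$ removes one generator from the interior of the chain while preserving its left-right-left shape, yielding $m = 2\tau(K) - n$ for all $n < 2\tau(K)$. At $n = 2\tau(K)$ all interior generators are eliminated and a final cancellation produces the minimal chain of case 2. Past this threshold, each additional $\tau_{+1}^{DA}$-twist reintroduces an interior generator, but with the $\rho$- and $\sigma$-labels in the opposite cyclic pattern, producing the right-left-right chain of case 3 with $m = n - 2\tau(K)$.

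The main obstacle will be the transition at $n = 2\tau(K)$. Verifying that the iterated tensor product produces \emph{exactly} the degenerate chain of case 2, and that the next twist yields case 3 rather than simply rebuilding case 1, requires careful bookkeeping of how $\tau_{+1}^{DA}$ acts on $\imath_1$- versus $\imath_2$-generators near the endpoints $\mathbf{x}^0_0$ and $\mathbf{y}^0_\infty$. The swap of arrow pairs (such as $\rho_3 \leftrightarrow \rho_3 \sigma_{12}$ and $\rho_2 \sigma_{12} \leftrightarrow \rho_2$) encodes the reversal of the effective winding-region orientation as the framing crosses $2\tau(K)$, and is the essential algebraic content of the three-case split. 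As a sanity check, one could instead rerun the moduli-space analysis of Section~\ref{sec:main} with a winding region of arbitrary signed multiplicity; the three cases then emerge directly from the sign of $n - 2\tau(K)$.
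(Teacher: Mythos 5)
Your proposal is correct and takes essentially the same approach as the paper: the paper's proof is a one-sentence invocation of the derived tensor product of the large-negative-framing unstable chain with the type-$DA$ module of $\tau_{+1}$ (citing~\cite[Figure A.3]{LOT08}). You fill in the same idea with more detail, including the observation that the horizontal and vertical chains are unaffected and the bookkeeping at the threshold $n = 2\tau(K)$, which the paper leaves implicit.
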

\begin{proof}
The statement is easily proved by taking the derived tensor product of the unstable chain of a sufficiently large negative framing parameter with the type-$DA$ module associated to $\tau_{+1}$~\cite[Figure A.3]{LOT08}.
\end{proof}
Recall that the Poincare sphere can be obtained by the Dehn surgery of the left-handed trefoil $T$ with the framing parameter $-1$. In order to obtain the type-$D$ module of the meridian complement in the Poincare sphere, first we need to have the type-$DD$ module of $T$ and its meridian complement with the framing parameter $n=-1$ (note that $\tau(T)=-1$). Its structure is written below. 
\begin{displaymath}
\xymatrix{
 & z_0 \ar[dl]_{\rho_1 \sigma_3 + \rho_{123} \sigma_{123}} \ar@/^1pc/[drr]^{\rho_3 \sigma_{12}} & & & \\
z_{\infty} \ar[d]^{\sigma_2} & & & \gamma_1 \ar@/^1pc/[llu]^{\rho_2} \ar@/^1pc/[rdd]^{\sigma_1} \\
z_1 \ar[d]^{\sigma_1} \ar@/^1pc/[u]^{\rho_{23} \sigma_1} & & & & \\
x_{\infty} \ar@/^1pc/[u]^{\rho_{23} \sigma_2 } & & & & y_{\infty} \ar@/^1pc/[uul]^{\rho_{23} \sigma_2} \\
& x_0 \ar[ul]^{\rho_1 \sigma_3 + \rho_{123} \sigma_{123}} \ar@/_1pc/[r]_{\rho_3 \sigma_{12}} & y_{-1} \ar[l]_{\rho_2} \ar@/_1pc/[r]_{\rho_2 \sigma_{12}} & y_0 \ar[l]_{\rho_3} \ar[ur]_{\rho_1 \sigma_3 + \rho_{123} \sigma_{123}} &
}
\end{displaymath}
Taking the type-$A$ module $\widehat{CFA}(\mathcal{H}_0)$ of zero surgery produces the type-$D$ module of the meridian complement in the Poincare sphere as follows.
\begin{displaymath}
\xymatrix{
z_{\infty} \ar[d]_{\sigma_2} & & \gamma_1 \ar[ll]_{\sigma_3} \ar[dd]^{\sigma_1} \\
z_1 \ar[d]_{\sigma_1} & & \\
x_{\infty} & y_{-1} \ar[l]^{\sigma_3} \ar[r]_{\sigma_{123}} & y_{\infty}
}
\end{displaymath}
Due to the bijection between generators of $\widehat{CFK}(S^3_{-1}(T), \mu_T)$ and $\jmath_2 \widehat{CFD}(S^3_{-1}(T) \backslash \mu_T)$, we can assume $\gamma_1, z_1$ and $y_1$ generate $\widehat{CFK}(S^3_{-1}(T), \mu_T)$. On the other hand, the type-$D$ module of $\mu_T$ complement in $S^3_{-1}(T)$ can be computed from the doubly pointed Heegaard diagram representing $\mu_T$ in $S^3_{-1}(T)$ by attaching the winding region near the two points $z$ and $w$. Recall that the grading is a homotopy invariance and is independent from the choice of the bordered Heegaard diagram. Then, it is clear that the sequence
\begin{displaymath}
\xymatrix{
z_1 & z_{\infty} \ar[l]_{\sigma_2} & \gamma_1 \ar[l]_{\sigma_3}
}
\end{displaymath}
exists only when there is a domain connecting from $\gamma_1$ to $z_1$ with $n_w =1$. Likewise, the sequence 
\begin{displaymath}
\xymatrix{
\gamma_1 \ar[r]^{\sigma_1} & y_{\infty} & y_{-1} \ar[l]_{\sigma_{123}}
}
\end{displaymath}
exists only when there is a domain connecting from $\gamma_1$ to $y_{-1}$ with $n_z =1$. This proves Corollary~\ref{cor}.

\end{document}